\documentclass[a4paper,11pt,english]{article}

\usepackage{a4wide,bm,epsfig,booktabs}

\usepackage{amsmath}
\usepackage{tikz}
\usepackage{setspace}
\usepackage{relsize}  
\usepackage{graphicx}  
\usepackage{cancel} 
\usepackage{slashed}
\usepackage{verbatim} 
\usepackage{enumerate} 
\usepackage{stmaryrd} 
\usepackage{cite}
\usepackage[flushmargin,hang]{footmisc}
\usepackage{fancyhdr} 
\usepackage[arrow, matrix,curve]{xy}
\usepackage{hyperref}
\usepackage{amssymb}
\usepackage{amsthm}
\usepackage{eucal}
\usepackage{mathtools}
\usepackage{scalerel}

\DeclareMathOperator*{\innt}{\ThisStyle{\vstretch{0.9}{\hstretch{1.5}{\rotatebox{10}{$\SavedStyle\hspace{-0.5pt}\!\int\!\hspace{-0.5pt}$}}}}}
\DeclareMathOperator*{\Der}{\ThisStyle{\hstretch{1.2}{\rotatebox{0}{$\SavedStyle\delta^r$}}}}

\newcommand{\AI} {\mathfrak{X}}
\newcommand{\slim}[1] {\rightarrow_{#1}}

%Körper
\newcommand{\NN} {\mathbb{N}}

\newcommand{\RR} {\mathbb{R}}

%diff classes
\newcommand{\lip}  {\mathrm{lip}}

\newcommand{\kk} {\mathrm{k}}
\newcommand{\DE} {\mathrm{D}}

\newcommand{\bl} {\boldsymbol{[}}
\newcommand{\br} {\boldsymbol{]}}

%Halbnormen
\newcommand{\uu} {\mathfrak{u}}
\newcommand{\mm} {\mathfrak{m}}

\newcommand{\oo} {\mathfrak{o}}
\newcommand{\pp} {\mathfrak{p}}
\newcommand{\qq} {\mathfrak{q}}

\newcommand{\vv} {\mathfrak{v}}
\newcommand{\ww} {\mathfrak{w}}

\newcommand{\mmm} {\mm}

\newcommand{\ppp} {\pp}
\newcommand{\qqq} {\qq}

\newcommand{\vvv} {\vv}
\newcommand{\www} {\ww}

\newcommand{\inverse}[1] {{\mathfrak{inv}(#1)}}
\newcommand{\inver} {\mathfrak{inv}}

\newcommand{\psii}  {\psi}

\newcommand{\euler} {\mathrm{e}}

\newcommand{\SEM} {\mathfrak{P}}

\newcommand{\SEMM} {\mathfrak{Q}}

%Evolution Map
\newcommand{\inv} {\mathrm{inv}}
\newcommand{\DIDE} {\mathfrak{D}}
\newcommand{\EV} {\mathrm{Evol}}
\newcommand{\evol} {{\mathrm{evol}}}

\newcommand{\pl} {{\boldsymbol{+}}}

\newcommand{\elll} {u}

\newcommand{\bchi}[1] {\boldsymbol{\chi}_{#1}}
\newcommand{\bbchi} {\boldsymbol{\chi}}

%Lie Gruppen
\newcommand{\B} {\mathrm{B}}
\newcommand{\OB} {\ovl{\B}}

\newcommand{\chart} {\Xi}
\newcommand{\chartinv} {\Xi^{-1}}
\newcommand{\RT} {\mathrm{R}}

\newcommand{\Ad} {\mathrm{Ad}}
\newcommand{\Add} {\mathbf{Ad}}

\newcommand{\com}[1] {\llbracket{#1}\rrbracket}

\newcommand{\ul}[1] {\underline{#1}}

%Kurven

\newcommand{\CP} {\mathrm{CP}}

\newcommand{\DP} {\DIDE\mathrm{P}}

%Bump functions

%Konstanten

\newcommand{\dind}  {\mathrm{s}}

\newcommand{\dindq}  {\mathrm{q}}

\newcommand{\mackeyconst} {\mathfrak{c}}

\newcommand{\mackeyindex} {\mathfrak{l}}

%Completion
\newcommand{\comp}[1] {\ovl{#1}}

\newcommand{\Lamb} {\boldsymbol{\Lambda}}

%Maps
\newcommand{\llleq} {\preceq}

\newcommand{\exxp}  {\exp}

\newcommand{\conj}  {\mathbf{c}}

\newcommand{\id} {\mathrm{id}}

\newcommand{\dermapdiff}  {\omega}
\newcommand{\dermapinvdiff}  {\upsilon}

%Sprache

%Sonstiges

\newcommand{\MA} {\mathcal{A}}
\newcommand{\MAU} {\MA^\times}

\newcommand{\ovl}[1] {\overline{#1}}

\newcommand{\dd} {\mathrm{d}}
\newcommand{\im} {\mathrm{im}}
\newcommand{\dom} {\mathrm{dom}}

\newcommand{\U}  {\mathcal{U}}
\newcommand{\V}  {\mathcal{V}}

\newcommand{\w} {\omega}

\newcommand{\deff} {if and only if } %Used for equivalences and Theorems  
\newcommand{\defff} {if } %Used for definitions 

\newcommand{\mg} {\mathfrak{g}}
\newcommand{\mh} {\mathfrak{h}}

\newcommand{\cp} {\circ}

\newcommand{\mult}  {\mathrm{m}}

\newcommand{\compact} {\mathrm{C}}
\newcommand{\compacto} {\mathrm{K}}

%Spacings
\newcommand{\he} {\hspace{1pt}}

\setlength{\textwidth}{6.5in}
\setlength{\oddsidemargin}{-3.5mm}

\renewcommand{\theenumi}{\arabic{enumi})} 
\renewcommand{\labelenumi}{\theenumi}

\let\origenumerate\enumerate
\def\enumerate{\origenumerate\itemsep0pt}
\let\origitemize\itemize
\def\itemize{\origitemize\itemsep0pt}

\newenvironment{customthm}[1]
  {\innercustomthm}
  {\endinnercustomthm}

\newenvironment{custompr}[1]
  {\innercustompr}
  {\endinnercustompr}

\newtheorem{theorem}{Theorem}
\newtheorem{proposition}{Proposition}
\newtheorem{lemma}{Lemma}
\newtheorem{corollary}{Corollary}
\newtheorem{remark}{Remark}

\newtheorem{example}{Example}

\makeatletter
\def\blfootnote{\gdef\@thefnmark{}\@footnotetext}
\makeatother

\begin{document}
\title{The Regularity Problem for Lie Groups with\\ Asymptotic Estimate Lie Algebras}
\author{
  \textbf{Maximilian Hanusch}\thanks{\texttt{mhanusch@math.upb.de}}
  \\[1cm]
  Institut f\"ur Mathematik \\
  Lehrstuhl f\"ur Mathematik X \\
  Universit\"at W\"urzburg \\
  Campus Hubland Nord \\
  Emil-Fischer-Stra\ss e 31 \\
  97074 W\"urzburg \\
  Germany
}
%\date{\today}
\date{January 24, 2020}
\maketitle

\begin{abstract}  
We solve the regularity problem for Milnor's infinite dimensional Lie groups in the asymptotic estimate context. Specifically, let $G$ be a Lie group with asymptotic estimate Lie algebra $\mg$, and denote its evolution map by $\evol\colon \DE\equiv \dom[\evol]\rightarrow G$, i.e., $\DE\subseteq C^0([0,1],\mg)$. We show that $\evol$ is $C^\infty$-continuous on $\DE\cap C^\infty([0,1],\mg)$ \deff $\evol$ is $C^0$-continuous on $\DE\cap C^0([0,1],\mg)$. We furthermore show  that $G$ is k-confined for $k\in \NN\sqcup\{\lip,\infty\}$ if $G$ is constricted. (The latter condition is slightly less restrictive than to be asymptotic estimate.) Results obtained in a previous paper then imply that an asymptotic estimate Lie group $G$ is $C^\infty$-regular \deff it is Mackey complete, locally $\mu$-convex, and has Mackey complete Lie algebra -- In this case, $G$ is $C^k$-regular for each $k\in \NN_{\geq 1}\sqcup\{\lip,\infty\}$ (with ``smoothness restrictions'' for $k\equiv\lip$), as well as $C^0$-regular if $G$ is even sequentially complete with integral complete Lie algebra. 
\end{abstract}

\tableofcontents 

\section{Introduction} 
In 1983 Milnor introduced his regularity concept \cite{MIL} as a tool to extend proofs of fundamental Lie theoretical facts to infinite dimensions. Specifically, he adapted (and weakened) the regularity concept introduced in 1982 by Omori et al.\ for Fr\'{e}chet Lie groups \cite{OMORI} to such Lie groups that are  modeled over complete Hausdorff locally convex vector spaces. Then, he proved that given Lie groups $G,H$ with Lie algebras $\mg,\mh$ such that $H$ is regular and $G$ is connected and simply connected, then each continuous Lie algebra homomorphism $\mg\rightarrow \mh$ integrates (necessarily unique) to a smooth Lie group homomorphism $G\rightarrow H$. In this paper, we work in the slightly more general setting introduced in \cite{HG} by Gl\"ockner. Specifically, this means that any completeness presumption on the modeling space is dropped.\footnote{To prevent confusion, we additionally remark that Milnor's definition of an infinite dimensional manifold $M$ involves the requirement that $M$ is a regular topological space, i.e., fulfills the separation axioms $T_2, T_3$. Deviating from that, in \cite{HG}, only the $T_2$ property of $M$ is explicitly presumed. This restriction, however, makes no difference in the Lie group case, because topological groups are automatically $T_3$.} 
Roughly speaking, regularity is concerned with definedness and smoothness/continuity of the product integral. This is a notion that naturally generalizes the concept of the Riemann integral for curves in locally convex vector spaces to infinite dimensional Lie groups (Lie algebra valued curves are thus integrated to Lie group elements). For instance, the exponential map of a Lie group is the restriction of the product integral to constant curves; and, given a principal fibre bundle, holonomies are product integrals of such Lie algebra valued curves that are pairings of smooth connections with derivatives of curves in the base manifold of the bundle.   Although individual arguments show that
the generic infinite dimensional Lie group is $C^\infty$-regular or stronger, only recently general regularity criteria had been found \cite{HGGG, KHN2, RGM}.  
In this paper, we solve the regularity problem in the asymptotic estimate context. Specifically, a Lie group $G$ is said to be  asymptotic estimate if its Lie algebra $\mg$ is asymptotic estimate, i.e, if 
to each continuous seminorm $\vv$ on $\mg$, there exists a continuous seminorm $\vv\leq \ww$ on $\mg$ such that
\begin{align}
\label{assaaas}
	\vv(\bl X_1,\bl X_2,\bl \dots,\bl X_n,Y\br{\dots}\br\br\br)\leq \ww(X_1)\cdot{\dots}\cdot \ww(X_n)\cdot \ww(Y)
\end{align}
holds for all $X_1,\dots,X_n,Y\in \mg$ and $n\geq 1$.\footnote{By the best of our knowledge, the term asymptotic estimate had been introduced in \cite{BOSECK} in the context of (not neccessarily associative) Hausdorff locally convex algebras. In \cite{CSP}, a weaker definition has been used that in particular specializes to the condition $(*)$ formulated in \cite{HGIA} for continuous inverse algebras (associativity). Our definition is a priori slightly weaker (a detailed combinatorical analysis involving the Jacobi identity might show equivalence) than the notion in \cite{CSP} when applied to Lie algebras.}  
For instance, abelian Lie groups are asymptotic estimate, and the same is true for Lie groups with nilpotent Lie algebras. Also Banach Lie groups are asymptotic estimate, because their Lie bracket is submultiplicative. Notably, the class of asymptotic estimate Lie algebras has good permanence properties, as it is closed under passage to subalgebras, Hausdorff quotient Lie algebras, as well as closed under arbitrary cartesian products (hence, e.g., under projective limits). Various examples of asymptotic estimate Lie groups are thus obtained by taking, e.g., products of Banach Lie groups with Lie groups with nilpotent Lie algebras. 

The results obtained in this paper are basically due to a deeper analysis of the adjoint equation that (to a certain extent) had been started in \cite{RGM}. More specifically, we prove a certain approximation property of the adjoint action that we then use to show  the following statements:
\begingroup
\setlength{\leftmargini}{17pt}
{
\renewcommand{\theenumi}{{\bf \arabic{enumi})}} 
\renewcommand{\labelenumi}{\theenumi}
\begin{enumerate}
\item
\label{aprop1}
If \eqref{assaaas} holds, then  
$C^\infty$-continuity  
of the evolution map is equivalent to 
$C^0$-continuity.\footnote{Apart from the mentioned approximation property, here we use a generalization of the argument used in the proof of Lemma 16 in \cite{RGM} for the abelian case. We specifically remark that the equivalence of $C^\infty$ and $C^0$-continuity already follows in the abelian case, when Lemma 16 in \cite{RGM} is combined with Theorem 1 in \cite{RGM}.}  
\item
\label{aprop2} 
$G$ is k-confined for $k\in \NN\sqcup\{\lip,\infty\}$ if $\mg$ is constricted. The latter condition is slightly less restrictive than Condition \eqref{assaaas}, and k-confinedness  
is an integrability condition that was introduced in \cite{RGM}.
\end{enumerate}}
\endgroup
\noindent 
In particular, we will prove that, cf.\ Theorem (\ref{confevv111}.\ref{confevv22}
\vspace{6pt}

{\it``Let $G$ be an infinite dimensional Lie group in Milnor's sense with asymptotic estimate Lie bracket. 
Then,   
$G$ is $C^\infty$-regular \deff $G$ is locally $\mu$-convex, Mackey complete, and has Mackey complete Lie algebra -- In this case, $G$ is $C^k$-regular for each $k\in \NN_{\geq 1}\sqcup \{\lip,\infty\}$.''
}
\vspace{6pt}

Here, Mackey completeness of $G$ (cf.\ Sect.\ \ref{uepdspodspodsapoa}) generalizes Mackey completeness as defined for locally convex vector spaces (as, e.g., for $\mg$); and, ``locally $\mu$-convex'' means that to each continuous seminorm $\uu$ on the modeling space $E$ of $G$, and to each chart $\chart\colon G\supseteq \U\rightarrow \V\subseteq E$ of $G$ around $e$ with $\chart(e)=0$, there exists a continuous seminorm $\uu\leq \oo$ on $E$ such that 
\begin{align}
\label{podspodspopodsds}
	(\uu\cp\chart)\big(\chart^{-1}(X_1)\cdot {\dots}\cdot \chart^{-1}(X_n)\big)\leq \oo(X_1)+{\dots}+\oo(X_n)
\end{align} 
holds for all $X_1,\dots,X_n\in E$ with $\oo(X_1)+{\dots}+\oo(X_n) \leq 1$.  
This notion had been introduced in \cite{HGGG} as a tool to investigate regularity properties of weak direct products of Lie groups; and then was shown to be equivalent to $C^0$-continuity of the evolution map in \cite{RGM}. Apart from the regularity problem, local $\mu$-convexity has turned out to be of relevance also for other problems in infinite dimensional Lie theory. For instance, it was shown in \cite{RGM} (confer also Lemma 2 in \cite{TRM}) that local $\mu$-convexity implies continuity of the evolution map  w.r.t.\ the $L^1$-topology that plays a role, e.g., in the measurable regular context \cite{HGM}. Moreover, it was shown in \cite{TRM} that local $\mu$-convexity implies the strong Trotter property \cite{HGM} that is relevant, e.g., in representation theory of infinite dimensional Lie groups \cite{KHN}. The statement \ref{aprop1} proven in this paper, thus in particular extends the range of application of these results to all  asymptotic estimate Lie groups with $C^\infty$-continuous evolution map.    

This paper is organized as follows:
\begingroup
\setlength{\leftmargini}{12pt}
\begin{itemize}
\item
In Sect.\ \ref{sdlkdslkdslkds}, we state the main results obtained in this paper; and provide the solution to the regularity problem in the asymptotic estimate (constricted) context, cf.\ Theorem \ref{confevv111}.  
\item 
In Sect.\ \ref{dsdssd}, we provide the basic definitions; and recall the properties of the core mathematical objects of this paper that are relevant for our discussions in the main text. 
\item
In Sect.\ \ref{sdddsdsdsds}, we prove an approximation property of the adjoint action -- and then derive some estimates from this that will be used in Sect.\ \ref{podspodspodspo} and Sect.\ \ref{podspodspodspo1} to prove our main results.
\item
In Sect.\ \ref{podspodspodspo}, we prove the statement made in \ref{aprop2}, cf.\ Proposition \ref{fdkjfdkjdfkjfdjfdlkjfdl}.
\item
In Sect.\ \ref{podspodspodspo1}, we prove the statement made in \ref{aprop1}, cf.\ Theorem \ref{aoelsaoesalsaoelsa}.
\end{itemize}
\endgroup

\section{Precise Statement of the Results}
\label{sdlkdslkdslkds}
Let $G$ be an infinite dimensional Lie group in the sense of \cite{HG} (cf.\ Definition 3.1 and Definition 3.3 in \cite{HG}) that is modeled over the Hausdorff locally convex vector space $E$, with corresponding system of continuous seminorms $\SEM$. We denote the Lie algebra of $G$ by $(\mg,\bl \cdot,\cdot\br)$, the identity element by $e\in G$, the Lie group  multiplication by $\mult\colon G\times G\rightarrow G$, and define $\RT_g:=\mult(\cdot, g)$ for each $g\in G$.       
We furthermore fix a chart $\chart\colon G\supseteq \U\rightarrow \V\subseteq E$ with $\V$ convex, $e\in \U$, and $\chart(e)=0$; and let $\ppp(X):=(\pp\cp\dd_e\chart)(X)$ for each $\pp\in \SEM$, and $X\in \mg$. 
\vspace{6pt}

\noindent
The right logarithmic derivative is defined by $\Der(\mu)=\dd_\mu\RT_{\mu^{-1}}(\dot\mu)\in C^0(D,\mg)$, for $\mu\in C^1(D,G)$ with $D\subseteq \RR$ an interval. The evolution map is given by\footnote{The elementary properties of the evolution map are recalled in Sect.\ \ref{kjdsjlkdsklskjd}.}
\begin{align*}
	\evol\colon \Der(C^1([0,1],G))\rightarrow G,\qquad \Der(\mu)\mapsto \mu(1)\cdot \mu(0)^{-1}.
\end{align*}
For each $k\in \NN\sqcup\{\lip,\infty\}$, we let 
\begin{align*}
	\evol_\kk:=\evol|_{\dom[\evol]\cap C^k([0,1],\mg)};
\end{align*}
and say that $G$ is $C^k$-semiregular \defff $\dom[\evol_\kk]=C^k([0,1],\mg)$ holds. 
We say that $G$ is $C^k$-regular \defff $G$ is $C^k$-semiregular, such that
\begingroup
\setlength{\leftmargini}{11pt}
\begin{itemize}
\item
For $k\in \NN\sqcup\{\infty\}$:\quad\quad\:\: $\evol_\kk$ is smooth w.r.t.\ the $C^k$-topology,
\item
For $k\equiv\lip$:\hspace{39.5pt}\quad\:\: $\evol_\kk$ is of class $C^1$ w.r.t.\ the $C^0$-topology.\footnote{Confer Remark \ref{sdsddssdsd} for an explanation of the deviating definition in the Lipschitz case. Moreover, confer Sect.\ \ref{ksasahgsahgajdkjfdkjfd} for a precise definition of the spaces $C^k([0,1],\mg)$ for $k\in \NN\sqcup \{\lip,\infty\}$.}
\end{itemize}
\endgroup
\noindent
We recall that $\mg$ is said to be Mackey complete \defff $\int \phi(s)\: \dd s\in \mg$ exists for each $\phi\in C^\lip([0,1],\mg)$; as well as integral complete \defff  $\int \phi(s)\: \dd s\in \mg$ exists for each $\phi\in C^0([0,1],\mg)$. We refer to Theorem 2.14 in \cite{COS} for a summary of the alternative  definitions of Mackey completeness commonly used in the literature.  

\subsection{State of the Art}
\label{lkfdlkfdlkdflkfdlfd}
In \cite{KHN2} it had been clarified that $C^\infty$-regularity implies Mackey completeness of $\mg$. In \cite{HGGG}, this was supplemented by proving that $C^0$-regularity implies integral completeness of $\mg$. It was furthermore shown in \cite{HGGG} that $\evol_\kk$ is smooth for $k\in \NN\sqcup\{\infty\}$ \deff it is of class $C^1$; and several regularity criteria were provided there. Then, in \cite{RGM} it was shown that integral-, and Mackey completeness are actually ``if and only if'' conditions.   
More specifically, let us say that $G$ is k-continuous for $k\in \NN\sqcup\{\lip,\infty\}$ \defff $\evol_\kk$ is $C^k$-continuous ($C^0$-continuous for $k\equiv\lip$) on its domain. 
Then,  Theorem 4 in \cite{RGM} (Corollary 13 in \cite{RGM} for $k\equiv \lip$) states
\begin{customthm}{C}\label{Smthn}
\begingroup
\setlength{\leftmargini}{17pt}
\begin{enumerate}
\item
\label{smthn1}
If $G$ is {\rm 0}-continuous and $C^0$-semiregular, then $G$ is $C^0$-regular \deff $\evol_0$ is differentiable at zero \deff $\mg$ is integral complete.
\item
\label{smthn2}
 If $G$ is {\rm k}-continuous and $C^k$-semiregular for $k\in \NN_{\geq 1}\sqcup\{\lip,\infty\}$, then $G$ is $C^k$-regular \deff $\evol_\kk$ is differentiable at zero \deff 
	$\mg$ is Mackey complete.
\end{enumerate}
\endgroup
\end{customthm}
\vspace{-5pt}
\noindent
This solves the smoothness issue in full generality, and reduces the regularity problem to the following two questions: 
\begingroup
\setlength{\leftmargini}{19pt}
{
\renewcommand{\theenumi}{{\bf \Alph{enumi}})} 
\renewcommand{\labelenumi}{\theenumi}
\begin{enumerate}
\item
\label{aaadffda1}
Under which circumstances is a Lie group k-continuous for some given $k\in \NN\sqcup\{\lip,\infty\}$.
\item
\label{aafddfaa2}
	Under which circumstances is a Lie group $C^k$-semiregular for some given $k\in \NN\sqcup\{\lip,\infty\}$.
\end{enumerate}}
\endgroup
\noindent
In \cite{RGM}, these questions had been answered in the $C^0$-topological setting: 

More specifically, 
 Theorem 1 in \cite{RGM} shows that
\begin{customthm}{A}\label{LMC}
$G$ is {\rm 0}-continuous \deff $G$ is locally $\mu$-convex (i.e., fulfills \eqref{podspodspopodsds}). 
\end{customthm}
Moreover, Theorem 3 in \cite{RGM} states that
\begin{customthm}{B}
\label{confev}
Assume that $G$ is locally $\mu$-convex. Then,
\vspace{-4pt} 
\begingroup
\setlength{\leftmargini}{17pt}
\begin{enumerate}
\item
\label{confev1}
 $G$ is $C^0$-semiregular  if $G$ is sequentially complete and $0$-confined.
\item
\label{confev2}
 $G$ is $C^{k}$-semiregular for $k\in \NN_{\geq 1}\sqcup\{\lip,\infty\}$ \deff $G$ is Mackey complete and {\rm k}-confined.
\end{enumerate}
\endgroup
\end{customthm}
Here, sequentially-, and Mackey completeness generalize sequentially-, and Mackey completeness as defined for locally convex vector spaces; and, k-confinedness, for $k\in \NN\sqcup\{\lip,\infty\}$, is an approximation property for $C^k$-curves (the precise definitions are not relevant at this point -- they are recalled and explained in Sect.\ \ref{uepdspodspodsapoa} and Sect.\ \ref{podspodspodspo}, respectively).
\vspace{6pt}

\noindent
Let now $\com{X}\colon \mg\ni Y\mapsto \bl X,Y\br$ for each $X\in \mg$.   
\begingroup
\setlength{\leftmargini}{12pt}
\begin{itemize}
\item
We say that $\mg$ is {\bf asymptotic estimate} \defff for each $\vv\in \SEM$, there exists $\vv\leq\ww\in \SEM$, such that
\begin{align}
\label{fdjfdlkfdfdlkmcx}
	(\vvv\cp \com{X_1}\cp {\dots}\cp\com{X_n})(Y)\leq \www(X_1)\cdot{\dots}\cdot \www(X_n)\cdot\www(Y)
\end{align}
holds for all $X_1,\dots,X_n,\:Y\in \mg$, and $n\geq 1$. 
\item
We say that $\mg$ is {\bf constricted} \defff for each metrizable compact subset $\compacto\subseteq \mg$, and each $\vv\in \SEM$, there exist $C_\vv\geq 0$ and $\vv\leq \ww\in \SEM$ with
\begin{align}
\label{assssasaasasdsdsds}
	\vvv\cp \com{X_1}\cp {\dots}\cp \com{X_n}\leq  C_\vv^n\cdot \www\qquad\quad\forall\: X_1,\dots,X_n\in \compacto,\:\: n\geq 1.
\end{align}
\end{itemize}
\endgroup
\noindent
We say that $G$ is {\bf asymptotic estimate\slash constricted} \defff $\mg$ is asymptotic estimate\slash constricted.
\begin{remark}
\label{dffd}
\begingroup
\setlength{\leftmargini}{17pt}
\begin{enumerate}
\item
\label{r1}
Evidently, $G$ is constricted if $G$ is asymptotic estimate. 
\item
\label{r2}
Constrictedness as defined above is a weaker condition than constrictedness as defined in \cite{RGM}, because there it was formulated in terms of bounded subsets instead of metrizable compact  ones. We will use the more general definition in this paper, as this does not cause  additional effort in the proofs. 
\item
\label{r3}
In view of the previous point, we should also mention that for the purposes of this paper, we actually could weaken the definition of constrictedness once more -- namely, by requiring that \eqref{assssasaasasdsdsds} only holds for such metrizable compacts sets $\compacto\subseteq \mg$ that are of the form $\compacto=f([0,1])$ with $f\colon [0,1]\rightarrow \compacto$ continuous. We explicitly remark that this is equivalent to require that $\compacto=\Phi([0,1]^2)$ holds for some continuous map $\Phi\colon [0,1]^2\rightarrow \compacto$.\footnote{Observe that there exists a continuous map $\alpha\colon [0,1]\rightarrow [0,1]^2$; and that each continuous map $f\colon [0,1]\rightarrow \compacto$ admits the continuous extension $F\colon [0,1]^2\ni (x,y)\mapsto f(x)\in \compacto$.} This, in turn, is evidently equivalent to require that $\compacto=\Phi([a,b]\times [a',b'])$ holds for some continuous map $\Phi\colon [a,b]\times [a',b']\rightarrow \compacto$ with $a<b$ and $a'<b'$. 
\hspace*{\fill}$\ddagger$
\end{enumerate}
\endgroup
\end{remark}
\subsection{Statement of the Results}
\label{kjdsjdskjskjdskjsk}
In Sect.\ \ref{opfdpodfpofdpofd}, we prove the following theorem.
\begin{theorem}
\label{aoelsaoesalsaoelsa}
If $G$ is asymptotic estimate, then $G$ is $\infty$-continuous \deff $G$ is {\rm 0}-continuous (\deff 
  $G$ is locally $\mu$-convex, by Theorem \ref{LMC}).  
\end{theorem}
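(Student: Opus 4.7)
The implication $0$-continuous $\Rightarrow$ $\infty$-continuous is immediate, because the standard $C^\infty$-topology on $C^\infty([0,1],\mg)$ is finer than the restriction of the $C^0$-topology, so a map continuous for the coarser topology is a fortiori continuous for the finer one. Combined with Theorem \ref{LMC}, the equivalences with local $\mu$-convexity both reduce to the reverse implication, so the real task is to deduce local $\mu$-convexity from $\infty$-continuity under the asymptotic estimate hypothesis \eqref{fdjfdlkfdfdlkmcx}.

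The plan for the hard direction is to verify \eqref{podspodspopodsds} by expressing the product as an evolution of an explicit smooth curve and then invoking $\infty$-continuity at the zero curve. Concretely, given $X_1,\dots,X_n\in E$ with $\oo(X_1)+{\dots}+\oo(X_n)\leq 1$, I would realize $\chart^{-1}(X_1)\cdot{\dots}\cdot\chart^{-1}(X_n)$ as $\evol(\psi)$, where $\psi$ is obtained by concatenating the smooth pieces $\psi_i:=\Der\bigl(t\mapsto\chart^{-1}(tX_i)\bigr)$ on equal subintervals of $[0,1]$ and smoothening the junctions through a fixed reparametrization $\alpha\colon[0,1]\rightarrow[0,1]$ that is flat to all orders at the corner points. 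By construction, the $C^0$-size of $\psi$ (measured in a seminorm $\www$ comparable to $\oo$) is controlled linearly by $\max_i\oo(X_i)\leq\sum_i\oo(X_i)$. Applying $\infty$-continuity of $\evol_\infty$ at zero then bounds $(\uu\cp\chart)(\evol(\psi))$ by finitely many $C^k$-seminorms of $\psi$, and it remains to convert this $C^k$-bound into the desired linear $C^0$-bound.

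This conversion is where the asymptotic estimate hypothesis and the adjoint action approximation of Sect.\ \ref{sdddsdsdsds} play the decisive role. Successive differentiation of $\psi$ produces iterated brackets of the form $\bl X_{i_1},\bl{\dots},\bl X_{i_j},\,{\cdot\,}\br{\dots}\br\br$ (weighted by derivatives of $\alpha$), whose $\vvv$-seminorms are controlled by \eqref{fdjfdlkfdfdlkmcx} in products $\www(X_{i_1})\cdot{\dots}\cdot\www(X_{i_j})$; the normalization $\sum_i\oo(X_i)\leq 1$ then collapses these multiplicative bounds into a single linear bound in the $\oo(X_i)$, independently of the bracket depth. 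This is the non-abelian analogue of the step underlying Lemma 16 of \cite{RGM}, and it is precisely the adjoint-action approximation estimates from Sect.\ \ref{sdddsdsdsds} that furnish the uniformity in $n$ required to close the argument.

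The main obstacle is the combinatorial bookkeeping: the number and structure of iterated bracket terms appearing in a $k$-th derivative of $\psi$ grow with both $k$ and $n$, while the factors contributed by $\alpha$ and its derivatives must be absorbed into a constant depending only on the fixed data (the chart $\chart$, the seminorms $\uu,\oo$, and $\alpha$ itself), not on $n$ or on the $X_i$. Once the estimates from Sect.\ \ref{sdddsdsdsds} are confirmed to yield a bound of this uniform shape, \eqref{podspodspopodsds} follows and Theorem \ref{LMC} converts it back to $0$-continuity, completing the equivalence.
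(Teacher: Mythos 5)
Your reduction is set up correctly: $0$-continuity trivially implies $\infty$-continuity (the $C^\infty$-topology is finer), so by Theorem \ref{LMC} everything hinges on deducing $0$-continuity, equivalently local $\mu$-convexity, from $\infty$-continuity. Your overall strategy -- realize a product of group elements as a single evolution, apply $\infty$-continuity at zero, and use the asymptotic estimate to tame bracket terms -- is also the right one in spirit. But the curve you construct does not do the job, and the step you defer (``convert this $C^k$-bound into the desired linear $C^0$-bound'') is precisely where the entire proof lives. If you place the pieces $\Der\bigl(t\mapsto\chartinv(tX_i)\bigr)$ on $n$ equal subintervals of $[0,1]$, the reparametrization onto an interval of length $1/n$ multiplies the logarithmic derivative by $n$ and its $k$-th derivative by $n^{k+1}$; so neither the $C^0$- nor the $C^k$-seminorms of your $\psi$ are controlled by $\sum_i\oo(X_i)\leq 1$ (take $n$ large with a single $X_i$ of size $1$), and $\infty$-continuity at zero then yields nothing. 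Moreover, differentiating that $\psi$ produces iterated derivatives of the chart representation of right translation (the maps $\dermapdiff[k]$ of \eqref{kldlkdldsl}), \emph{not} iterated Lie brackets; the brackets you want to feed into \eqref{fdjfdlkfdfdlkmcx} simply do not occur in your construction, and the flat reparametrization at the junctions only worsens the derivative estimates.

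The missing idea is to replace the naive concatenation by the $\Ad$-twisted sum: iterating Part \ref{kdsasaasassaas} of Proposition \ref{sddssddsxyxyxyxyyx} gives $\innt \phi_n\cdot{\dots}\cdot\innt\phi_1=\innt\bchi{\phi_1,\dots,\phi_n}$ with $\bchi{\phi_1,\dots,\phi_n}=\phi_n+\sum_{p}(\Add_{\phi_n}\cp{\dots}\cp\Add_{\phi_{p+1}})(\phi_p)$, where each factor is \emph{stretched} to all of $[0,1]$ and divided by $n$ as in \eqref{fdfdddfdfdfdddhjfdfdff} (the opposite rescaling to yours, so that $k$-th derivatives carry factors $n^{-(k+1)}$). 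This combined curve is automatically smooth (no corner smoothing needed), and its derivatives -- computed from the adjoint equation $\partial_t(\Add_\phi(\psi))=\bl\phi,\Add_\phi(\psi)\br+\Add_\phi(\dot\psi)$ of Lemma \ref{khdhjdhdkjkjdkjdjkd} -- are exactly the bracket\slash adjoint expressions that \eqref{fdjfdlkfdfdlkmcx} controls via Lemma \ref{fddfddffdfd}, whose proof is where the approximation results of Sect.\ \ref{sdddsdsdsds} enter. The quantitative heart is then Lemma \ref{dfdfdfd}: the $k$-th derivative of $\bchi{\phi_1,\dots,\phi_n}$ is a sum of at most $n(n+1)\cdots(n+k)$ such terms, each of size $n^{-(k+1)}$ after rescaling, so the $C^{\dindq}$-norm of the combined curve is bounded by $\euler\cdot(n+1)\cdots(n+\dindq)/n^{\dindq}$, which is made $\leq 3$ by a further subdivision of each factor; only then does $\infty$-continuity at zero apply (Proposition \ref{fdopodpofdsuepfdsfds}). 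Finally, rather than verifying \eqref{podspodspopodsds} directly, the paper proves $C^0$-continuity of $\evol_0$ at zero (Remark \ref{dsdssds}) and normalizes via Lemma \ref{cxcxcxxccxcxxccxaa} so that the partial evolutions $X_p=\chart\bigl(\innt\phi|_{[(p-1)/m,p/m]}\bigr)$ satisfy $\qq(m\cdot X_p)\leq 1$ and the straight-line replacements have uniformly bounded $C^{\dindq}$-norms; you would need some analogue of this normalization to make your additive bound come out.
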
 
We furthermore generalize Proposition 5 in \cite{RGM} (1.) by dropping both the presumption that $G$ admits an exponential map and that $\mg$ is sequentially complete, and (2.) by using the more general notion of constrictedness (cf.\ Remark \ref{dffd}.\ref{r2}).  Specifically, we show that, cf.\ Sect.\ \ref{podspodspodspo} 
\begin{proposition}
\label{fdkjfdkjdfkjfdjfdlkjfdl}
If $G$ is constricted, then $G$ is $\mathrm{k}$-confined for each $k\in \NN\sqcup\{\lip,\infty\}$.
\end{proposition}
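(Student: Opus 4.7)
The plan is to reduce the claim to a uniform exponential estimate on iterated Lie brackets along the given $C^k$-curve, which is precisely what constrictedness yields on compact images. Let me first recall that $k$-confinedness of $G$, introduced in \cite{RGM}, is an approximation property requiring that every $C^k$-curve $\phi\colon [0,1]\to \mg$ admits a sequence of ``tame'' approximations whose associated evolutions in $G$ satisfy uniform seminorm bounds dictated by $\phi$ itself. The decisive analytic input is therefore uniform control, on the image of $\phi$, of the adjoint action and all of its iterates.

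First, I would observe that whenever $\phi\in C^k([0,1],\mg)$, the compact set $\compacto:=\phi([0,1])\subseteq\mg$ is metrizable in its subspace topology: $[0,1]$ is second countable, hence so is $\compacto$, and any second-countable compact Hausdorff space is metrizable. Constrictedness applied to $\compacto$ then provides, for each $\vv\in \SEM$, a constant $C_\vv\geq 0$ and a seminorm $\vv\leq \ww\in \SEM$ with
\begin{align*}
\vvv\cp \com{X_1}\cp {\dots}\cp \com{X_n}\leq C_\vv^n\cdot \www \qquad \forall\: X_1,\dots,X_n\in \compacto,\:\: n\geq 1.
\end{align*}
Combined with the factor $1/n!$ appearing in the adjoint expansion, these bounds render the series controlling $\Ad$ along $\phi$ absolutely summable, uniformly on $[0,1]$, in every continuous seminorm. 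This uniform convergence is exactly what is needed to feed into the approximation property of the adjoint action established in Section \ref{sdddsdsdsds}, and thereby to produce the $k$-confining sequence of approximating curves for each $k\in \NN\sqcup\{\lip,\infty\}$.

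The main obstacle, compared with Proposition 5 of \cite{RGM}, is that we have neither an exponential map at our disposal nor may we presuppose sequential completeness of $\mg$. The argument in \cite{RGM} integrated iterated commutators via $\exp$ and used sequential completeness to pass to limits; here both tools are absent. I would circumvent this by keeping all estimates inside $\mg$ via the adjoint-action approximation of Section \ref{sdddsdsdsds}, and transfer the conclusion to $G$ only through the $k$-confining approximating curves themselves, which by definition are already of a form for which the evolution is well defined. For $k\in \NN_{\geq 1}\sqcup\{\lip,\infty\}$, the analogous estimates are needed also for the derivatives (or, in the Lipschitz case, the difference quotients) of $\phi$; but each such image is again a metrizable compact subset of $\mg$, a finite union of such sets remains metrizable compact, and constrictedness applies verbatim to handle all required orders of smoothness simultaneously.
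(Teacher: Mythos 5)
There is a genuine gap: you apply constrictedness to the wrong compact set. Tameness -- the property you must verify to get $\mathrm{k}$-confinedness -- is a condition on the \emph{approximating sequence} $\{\phi_n\}_{n\in\NN}\subseteq \DP^0([0,1],\mg)$, namely that $\vvv\cp\Ad_{[\innt_0^\bullet\phi_n]^{-1}}\leq \www$ holds uniformly in $n$. Via Lemma \ref{podspods}, this requires the constricted estimate \eqref{assssasaasasdsdsds} for iterated brackets $\com{X_1}\cp{\dots}\cp\com{X_m}$ with $X_1,\dots,X_m$ ranging over $\bigcup_{n\in\NN}\im[\phi_n]$ (up to signs), \emph{not} over $\phi([0,1])$. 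The images of the approximants need not be contained in $\phi([0,1])$, and uniform convergence alone does not place $\bigcup_n\im[\phi_n]$ inside a metrizable compact subset of $\mg$ (total boundedness of the union does not give relative compactness without completeness assumptions, which are exactly what is being avoided here). The paper resolves this by sharpening the construction of the approximating sequence (Lemma \ref{jshjsdahjsd}, supplementing Lemma 29 in \cite{RGM}): the $\phi_n$ are built from a single continuous map $\Phi$ on $[0,\Delta]\times[0,1]$, so that $\bigcup_n\im[\phi_n]\subseteq\im[\Phi]$, which is metrizable compact by Remark \ref{dffd}.\ref{r3}. Your proposal never addresses where the approximating sequence comes from, nor why it can be chosen with this containment property; these are the actual content of the proof, since the estimate along $\phi$ itself is not what is needed.

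Two further points. First, your closing paragraph about needing the constricted estimates ``also for the derivatives'' of $\phi$ for $k\geq 1$ reflects a misreading of the definition: $\mathrm{k}$-confinedness for $k\in\NN_{\geq 1}\sqcup\{\lip,\infty\}$ asks for a tame \emph{Mackey-Cauchy} sequence with respect to the sup-seminorms $\ppp_\infty$ (no derivatives of the approximants enter), and the Mackey-Cauchy property is supplied by the Lipschitz continuity of $\phi$ through the construction in Lemma \ref{jshjsdahjsd}, not by constrictedness. Second, your remark about absolute summability of ``the series controlling $\Ad$'' is not how the bound is obtained: since $\mg$ need not be sequentially complete, the paper does not sum any series, but instead uses the approximation scheme of Corollary \ref{fdpopodpofd} (whose limits exist a priori, being $\Add^t_{\phi^{-1}}(Y)$ for $\phi\in\DIDE_{[0,1]}$) together with the partial-sum estimates, which is the content of Lemma \ref{podspods}. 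You gesture at this, but the reduction as written does not close.
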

We thus obtain from Theorem \ref{confev} that
\begin{corollary}
\label{sddssdsddssdaaaa}
Assume that $G$ is constricted and locally $\mu$-convex. Then,
\vspace{-4pt} 
\begingroup
\setlength{\leftmargini}{17pt}
\begin{enumerate}
\item
\label{confevv1}
$G$ is \emph{$C^0$-semiregular} if $G$ is \emph{sequentially complete}.
\item
\label{confevv2}
$G$ is \emph{$C^\lip$-semiregular} \deff $G$ is \emph{Mackey complete} \deff $G$ is \emph{$C^\infty$-semiregular}.
\end{enumerate}
\endgroup
\end{corollary}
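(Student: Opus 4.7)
The plan is to obtain this corollary as a direct consequence of Proposition \ref{fdkjfdkjdfkjfdjfdlkjfdl} and Theorem \ref{confev}, so no new technical content is needed; all heavy lifting lies in the proposition and in the cited theorem. First I would invoke Proposition \ref{fdkjfdkjdfkjfdjfdlkjfdl}: since $G$ is constricted by assumption, $G$ is $\mathrm{k}$-confined for every $k\in \NN\sqcup\{\lip,\infty\}$ simultaneously. This removes the confinedness clause from both parts of Theorem \ref{confev}, leaving only completeness hypotheses.

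For part \ref{confevv1}, I would then apply Theorem \ref{confev}.\ref{confev1}: since $G$ is locally $\mu$-convex and (by the above) $0$-confined, the additional assumption that $G$ be sequentially complete is exactly what is needed to conclude that $G$ is $C^0$-semiregular. For part \ref{confevv2}, I would apply Theorem \ref{confev}.\ref{confev2} in two instances. Taking $k\equiv \infty$ gives that $G$ is $C^\infty$-semiregular \deff $G$ is Mackey complete and $\infty$-confined; and since $\infty$-confinedness is automatic by Proposition \ref{fdkjfdkjdfkjfdjfdlkjfdl}, this reduces to Mackey completeness of $G$. Taking $k\equiv \lip$ analogously yields that $C^\lip$-semiregularity of $G$ is equivalent to Mackey completeness of $G$. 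Chaining these two equivalences produces the triple equivalence claimed in \ref{confevv2}.

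Since both steps are pure applications of previously stated results, there is no genuine obstacle here; the only thing worth noting is the direction of implication in part \ref{confevv1}: Theorem \ref{confev}.\ref{confev1} is only an ``if'' statement (not ``\deff''), which matches the phrasing of the corollary. No further calculation or estimate is required for this proof.
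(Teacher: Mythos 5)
Your proposal is correct and is exactly the paper's argument: the corollary is stated as an immediate consequence of Proposition \ref{fdkjfdkjdfkjfdjfdlkjfdl} (constricted implies $\mathrm{k}$-confined for all $k$) combined with Theorem \ref{confev}, with part \ref{confevv2} obtained by instantiating Theorem \ref{confev}.\ref{confev2} at $k\equiv\lip$ and $k\equiv\infty$ and chaining the two equivalences through Mackey completeness. Your remark on the one-directional nature of part \ref{confevv1} is also consistent with the paper.
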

Combining Theorem \ref{aoelsaoesalsaoelsa} with Corollary \ref{sddssdsddssdaaaa} and Theorem \ref{Smthn}, we obtain the following statement.
\begin{theorem}
\label{confevv111}
\noindent
\vspace{-4pt} 
\begingroup
\setlength{\leftmargini}{17pt}
\begin{enumerate}
\item
\label{confevv11}
If $G$ is constricted, then $G$ is $C^0$-regular if $G$ is locally $\mu$-convex, sequentially complete, and has integral complete Lie algebra. 
\item
\label{confevv22}
If $G$ is asymptotic estimate, then  
$G$ is $C^\infty$-regular \deff $G$ is locally $\mu$-convex, Mackey complete, and has Mackey complete Lie algebra -- In this case, $G$ is $C^k$-regular for each $k\in \NN_{\geq 1}\sqcup \{\lip,\infty\}$.
\end{enumerate}
\endgroup
\end{theorem}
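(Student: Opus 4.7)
The plan is to chain together the three building blocks that the paper has already assembled — Theorem~\ref{Smthn}, Theorem~\ref{aoelsaoesalsaoelsa}, and Corollary~\ref{sddssdsddssdaaaa} — into two short arguments, one per part. I do not expect any substantive obstacle at this stage: all the real work lives in those preceding results, and what remains is careful bookkeeping of which hypothesis triggers which implication, keeping in mind the small case split between the $k=0$ situation (Part~\ref{confevv11}) and the $k\in \NN_{\geq 1}\sqcup\{\lip,\infty\}$ situation (Part~\ref{confevv22}).

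For Part~\ref{confevv11}, I would start from the assumed data — \emph{constricted}, \emph{locally $\mu$-convex}, \emph{sequentially complete}, $\mg$ \emph{integral complete} — and apply three results in succession: Corollary~\ref{sddssdsddssdaaaa}.\ref{confevv1} to extract $C^0$-semiregularity, Theorem~\ref{LMC} to translate local $\mu$-convexity into $0$-continuity, and finally Theorem~\ref{Smthn}.\ref{smthn1} together with integral completeness of $\mg$ to conclude $C^0$-regularity.

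For Part~\ref{confevv22} I would split into the two implications. For the ``if'' direction, Remark~\ref{dffd}.\ref{r1} upgrades asymptotic estimate to constricted, so that Corollary~\ref{sddssdsddssdaaaa}.\ref{confevv2} delivers $C^\lip$- and $C^\infty$-semiregularity from Mackey completeness of $G$; Theorem~\ref{aoelsaoesalsaoelsa} then promotes $0$-continuity (equivalent to local $\mu$-convexity by Theorem~\ref{LMC}) to $\infty$-continuity; and Theorem~\ref{Smthn}.\ref{smthn2}, combined with Mackey completeness of $\mg$, yields $C^\infty$-regularity. For the ``only if'' direction, smoothness of $\evol_\infty$ trivially gives $\infty$-continuity, which Theorem~\ref{aoelsaoesalsaoelsa} brings down to $0$-continuity and thus (Theorem~\ref{LMC}) to local $\mu$-convexity; Theorem~\ref{confev}.\ref{confev2} then forces Mackey completeness of $G$, and Theorem~\ref{Smthn}.\ref{smthn2} extracts Mackey completeness of $\mg$.

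Finally, to settle the clause ``$G$ is $C^k$-regular for each $k\in \NN_{\geq 1}\sqcup\{\lip,\infty\}$'' in Part~\ref{confevv22}, I would observe that the $C^\lip$-semiregularity obtained above automatically descends to $C^k$-semiregularity via the inclusion $C^k([0,1],\mg)\subseteq C^\lip([0,1],\mg)$, while $0$-continuity of $\evol$ propagates to $k$-continuity upon restricting to $C^k$-curves (since the $C^k$-topology refines the induced $C^0$-topology, and for $k\equiv\lip$ the very definition of $\lip$-continuity already asks for $C^0$-continuity). A last application of Theorem~\ref{Smthn}.\ref{smthn2}, once again fed by Mackey completeness of $\mg$, then produces $C^k$-regularity in every such degree, completing the proof.
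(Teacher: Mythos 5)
Your proposal is correct and follows essentially the same route as the paper's own proof: Corollary~\ref{sddssdsddssdaaaa} for semiregularity, Theorem~\ref{LMC} (and Theorem~\ref{aoelsaoesalsaoelsa} for the ``only if'' direction) for continuity, and Theorem~\ref{Smthn} to upgrade to regularity. The minor variations — invoking Theorem~\ref{confev}.\ref{confev2} directly rather than Corollary~\ref{sddssdsddssdaaaa}.\ref{confevv2} for Mackey completeness of $G$, and obtaining $C^k$-semiregularity for $k\in\NN_{\geq 1}$ via the inclusion $C^k([0,1],\mg)\subseteq C^\lip([0,1],\mg)$ — are cosmetic and equally valid.
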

\begin{proof}\let\qed\relax
\begingroup
\setlength{\leftmargini}{17pt}
\begin{enumerate}
\item
Assume that $G$ is constricted, locally $\mu$-convex, sequentially complete, and has integral complete Lie algebra. Then, 
Corollary \ref{sddssdsddssdaaaa}.\ref{confevv1} shows that $G$ is $C^0$-semiregular; and, Theorem \ref{LMC} shows that $G$ is 0-continuous. The claim thus follows from Theorem \ref{Smthn}.\ref{smthn1}.    
\item
If $G$ is asymptotic estimate and $C^\infty$-regular, then $G$ is locally $\mu$-convex by Theorem \ref{aoelsaoesalsaoelsa}, Mackey complete by Corollary \ref{sddssdsddssdaaaa}.\ref{confevv2} (confer also Theorem 2 in \cite{RGM}), and has Mackey complete Lie algebra by Theorem \ref{Smthn}.\ref{smthn2}. 

For the other direction, assume that $G$ is asymptotic estimate, locally $\mu$-convex, Mackey complete, and has Mackey complete Lie algebra. 
Then, $G$ is (in particular) constricted, hence $C^k$-semiregular for each $k\in \NN_{ \geq 1}\sqcup\{\lip,\infty\}$ by Corollary \ref{sddssdsddssdaaaa}.\ref{confevv2}. Since $G$ is locally $\mu$-convex, Theorem \ref{LMC} shows that $G$ is 0-continuous; thus, k-continuous for each $k\in \NN\sqcup\{\lip,\infty\}$. The claim is now  clear from Theorem \ref{Smthn}.\ref{smthn2}.\hspace*{\fill}$\qedsymbol$ 
\end{enumerate}
\endgroup
\end{proof}
\begin{remark}[The Lipschitz Case]
\label{sdsddssdsd}
Our convention concerning $C^\lip$-regularity is essentially due to the fact that Theorem E in \cite{HGGG} was formulated there for $k\in \NN\sqcup\{\infty\}$, but not for $k\in \NN\sqcup\{\lip,\infty\}$. More specifically,  Theorem E in \cite{HGGG} was applied in the proof of  
  Theorem 4 in \cite{RGM} (Theorem \ref{Smthn}), to deduce smoothness of $\evol_\kk$, for $k\in \NN\sqcup\{\infty\}$ from the fact that $\evol_\kk$ is of class $C^1$ (confer Corollary 13 in \cite{RGM}). 
  Now, it is to be expected that 
 the arguments in \cite{HGGG} also apply to the Lipschitz case, i.e., that Theorem E in \cite{HGGG} even holds for $k\in \NN\sqcup\{\lip,\infty\}$ (confer Remark 7 in \cite{RGM} for an alternative argument). Once this has been verified, Theorem \ref{Smthn} --  and thus, Theorem \ref{confevv111}.\ref{confevv22} -- also holds when ``of class $C^1$'' is replaced by ``smooth'' in our definition of $C^\lip$-regularity in the beginning of this section.
\hspace*{\fill}$\ddagger$
\end{remark}
\begin{remark}
\label{sdsddssdsdsasa}
We finally want to remark the following.
\begingroup
\setlength{\leftmargini}{17pt}
\begin{enumerate} 
\item
Theorem \ref{confevv111}.\ref{confevv11} is in line with the well-known fact that Banach Lie Groups are $C^0$-regular. Indeed, if $G$ is a Banach Lie group, then $\bl \cdot,\cdot\br$ is submultiplicative (thus, asymptotic estimate), $\mg$ is complete (thus, integral complete); and $G$ is locally $\mu$-convex by Proposition 14.6 in \cite{HG} (cf.\ also Appendix C.2 in \cite{RGM}), as well as sequentially complete by Example 3.2 in \cite{RGM}.
\item
Theorem \ref{confevv111}.\ref{confevv22} is in line with the theorem proven in \cite{HGIA},\footnote{Recall that this theorem was already generalized in \cite{RGM}; namely, due to Example 2.3), Example 3.3), and Point B) in Sect.\ 7.2.1 in \cite{RGM}.} stating that the unit group $G\equiv \MAU$ of a Mackey complete\footnote{Here, Mackey completeness refers to Mackey completeness of the Hausdorff locally convex vector space $(\MA,+)$.} continuous inverse algebra $\MA$, fulfilling the condition $(*)$ in \cite{HGIA}, is $C^\infty$-regular. Indeed, this condition implies that 
the Lie algebra of $\MAU$ is asymptotic estimate; and, by Example 2.3) and Example 3.3) in \cite{RGM}, it also implies that $G$ is locally $\mu$-convex and Mackey complete. Theorem \ref{confevv111}.\ref{confevv22} thus shows that $G$ is $C^\infty$-regular (even $C^\lip$-regular).
\hspace*{\fill}$\ddagger$
\end{enumerate}
\endgroup
\end{remark}

\section{Preliminaries}
\label{dsdssd}
In this section, we fix the notations, provide the basic definitions; and recall the properties of the core mathematical objects of this paper that are relevant for our discussions in Sect.\ \ref{podspodspodspo} and Sect.\ \ref{podspodspodspo1}. 
The proofs of the facts mentioned but not verified in this section can be found, e.g., in Sect.\ 3 in \cite{RGM}.

\subsection{Conventions}
\label{ksasahgsahgajdkjfdkjfd}
In this paper, Manifolds and Lie groups are always understood to be in the sense of \cite{HG} -- In particular, smooth, Hausdorff, and modeled over a Hausdorff locally convex vector space.\footnote{We explicitly refer to Definition 3.1 and Definition 3.3 in \cite{HG}. A review of the corresponding differential calculus -- including the standard differentiation rules used in this paper -- can be found, e.g., in Appendix \ref{Diffcalc} that essentially equals Sect.\ 3.3.1 in \cite{RGM}.}  
If $f\colon M\rightarrow N$ is a $C^1$-map between the manifolds $M$ and $N$, then $\dd f\colon TM \rightarrow TN$ denotes the corresponding tangent map between their tangent manifolds -- we write $\dd_xf\equiv\dd f(x,\cdot)\colon T_xM\rightarrow T_{f(x)}N$ for each $x\in M$. 
By an interval, we understand a non-empty, non-singleton connected subset $D\subseteq \RR$. 
 A curve is a continuous map $\gamma\colon D\rightarrow M$ for a manifold $M$ and an interval $D\subseteq \RR$.  
If $D\equiv I$ is open, then $\gamma$ is said to be of class $C^k$ for $k\in \NN\sqcup \{\infty\}$ \defff it is of class $C^k$ when considered as a map between the manifolds $I$ and $M$. 
If $D$ is an arbitrary interval, then $\gamma$ is said to be of class $C^k$ for $k\in \NN\sqcup \{\infty\}$ \defff $\gamma=\gamma'|_D$ holds for a $C^k$-curve $\gamma'\colon I\rightarrow M$ that is defined on an open interval $I$ containing $D$ -- we  write $\gamma\in C^k(D,M)$ in this case.  
If $\gamma\colon D\rightarrow M$ is of class $C^1$, then we denote the corresponding tangent vector at $\gamma(t)\in M$ by $\dot\gamma(t)\in T_{\gamma(t)}M$. 
These conventions also hold if $M\equiv F$ is a Hausdorff locally convex vector space, with system of continuous seminorms $\SEMM$. In this case, we let $C^\lip([r,r'],F)$ (for $r<r'$) denote the set of all Lipschitz curves; i.e., all curves $\gamma\colon [r,r']\rightarrow F$, such that
\begin{align*}
	\qq(\gamma(t)-\gamma(t'))\leq L_\qq\cdot |t-t'|\qquad\quad\forall\: t,t'\in [r,r'],\:\: \qq\in \SEMM
\end{align*}
holds for constants $\{L_\qq\}_{\qq\in \SEMM}\subseteq \RR_{\geq 0}$. For $k\in \NN\sqcup\{\lip,\infty\}$, we define
\begin{align}
\label{kjfdkjfkjfkjd}
	\textstyle\qq_\infty^\dindq(\gamma):= \sup\{\qq(\gamma^{(m)}(t))\:|\: 0\leq m\leq \dindq,\:t\in [r,r']\}\qquad\quad\forall\: \gamma\in C^k([r,r'],F)
\end{align}
for $r<r'$, $\qq\in \SEMM$, and $\dindq\llleq k$ -- which means $\dindq\leq k$ for $k\in \NN$, $\dindq\equiv 0$ for $k\equiv\lip$, and $\dindq\in \NN$ for $k\equiv \infty$ -- and let $\qqq_\infty\equiv \qqq^0_\infty$ for each $\qq\in \SEMM$.

\subsection{Lie Groups}
In this paper, $G$ will always denote an infinite dimensional Lie group  in the sense of \cite{HG} that is modeled over the Hausdorff locally convex vector space $E$. The system of continuous seminorms on $E$ will be denoted by $\SEM$; and we define
\begin{align*}
	\OB_{\mm,\varepsilon}:=\{X\in E\:|\: \mm(X)\leq\varepsilon\}\qquad\quad\forall\:\mm\in \SEM,\:\:\varepsilon>0.
\end{align*} 
We denote the Lie algebra of $G$ by $(\mg,\bl\cdot,\cdot\br)$, the identity element by $e\in G$, fix a chart 
\begin{align*}
	\chart\colon G\supseteq \U\rightarrow \V\subseteq E
\end{align*}
with $\V$ convex, $e\in \U$, and $\chart(e)=0$; 
 and identify $\mg\cong E$ via $\dd_e\chart\colon \mg\rightarrow E$.   Specifically, this means that we will write $\ppp(X)$ instead of $(\pp\cp\dd_e\chart)(X)$ for each $\pp\in \SEM$ and $X\in \mg$ in the following. 
We let $\mult\colon G\times G\rightarrow G$ denote the Lie group multiplication, $\inv\colon G\ni g\mapsto g^{-1}\in G$ the inversion, $\RT_g:=\mult(\cdot, g)$ the right translation by $g\in G$; and $\Ad\colon G\times \mg\rightarrow \mg$ the adjoint action, i.e., we have 
\begin{align*}
	\Ad(g,X)\equiv\Ad_g(X):=\dd_e\conj_g(X)\qquad\quad\text{with}\qquad\quad \conj_g\colon G\ni h\mapsto g\cdot  h\cdot g^{-1}\in G
\end{align*}
for each $g\in G$ and $X\in \mg$. We recall that for $\Ad\bl Y\br\colon G\ni g\mapsto \Ad_g(Y)\in \mg$, we have
\begin{align}
\label{podspodspods}
\dd_e\Ad\bl Y\br(X)=\bl X,Y\br\qquad\quad\forall\: X,Y\in \mg;
\end{align}
and define inductively  
\begin{align*}
\com{X}^n:=\com{X}\cp\com{X}^{n-1}\qquad\quad\forall\: n\geq 1,
\end{align*} 
with $\com{X}^0:=\id_\mg$ as well as $\com{X}\equiv \com{X}^1 \colon \mg\ni Y\mapsto \bl X,Y\br$ for each $X\in \mg$.

\subsection{The Evolution Map}
\label{kjdsjlkdsklskjd}   
In this subsection, we provide the relevant facts and definitions concerning the right logarithmic derivative and the evolution map.

\subsubsection{Basic Facts and Definitions}
The {right logarithmic derivative} is given by
\begin{align*}
	\Der\colon C^1(D,G)\rightarrow C^0(D,\mg),\qquad \mu\mapsto \dd_\mu\RT_{\mu^{-1}}(\dot \mu)
\end{align*}
for each interval $D\subseteq \RR$. It is immediate from the  definitions that the following identities hold for $\mu,\nu\in C^1(D,G)$, $g\in G$, $D'\subseteq \RR$ an interval with $D'\subseteq D$, and $\rho\colon D''\rightarrow D$ of class $C^1$ (we set $\mu^{-1}\equiv\inv\cp\mu$):
\begin{align}
\label{fgfggfsss}
\begin{split}
	\Der(\mu\cdot g)=\Der(\mu)\qquad\quad\text{and}\qquad\quad \Der(\mu|_{D'})=\Der(\mu)|_{D'}\hspace{90pt}\\[5pt]
	%\label{substi}
	\Der( \mu\cp\varrho)=\dot\varrho\cdot(\Der(\mu)\cp\varrho)\hspace{170pt}\\[5pt]
	\Der(\mu\cdot \nu)= \Der(\mu)+\Ad_\mu(\Der(\nu))\qquad\text{implying}\qquad \Der(\mu^{-1}\nu)=\Ad_{\mu^{-1}}(\Der(\nu) -\Der(\mu)).\quad\:\:
\end{split}
\end{align}
We define $\DIDE_{[r,r']}:=\Der(C^1([r,r'],G))$ for $r<r'$, as well as
\begin{align*}
\textstyle\DIDE_{[r,r']}^k:=\DIDE_{[r,r']}\cap C^k([r,r'],\mg)\qquad\quad\forall\: k\in \NN\sqcup\{\lip,\infty\}.
\end{align*}
It follows from the third line in \eqref{fgfggfsss} that $\Der$ restricted to
\begin{align*}
	C^1_*([r,r'],G):=\{\mu\in C^1([r,r'],G)\:|\: \mu(0)=e\}
\end{align*}
 is injective for $r<r'$, cf.\ e.g. Lemma 9 in \cite{RGM}; hence, that the map 
$\EV\colon \bigsqcup_{\RR\ni r<r'\in \RR}\DIDE_{[r,r']}\rightarrow \bigsqcup_{\RR\ni r<r'\in \RR}C_*^1([r,r'],G)$ given by 
\begin{align*}
	\EV\colon \DIDE_{[r,r']}\rightarrow C_*^{1}([r,r'],G),\qquad\Der(\mu)\mapsto \mu\cdot \mu(r)^{-1}
\end{align*}
is defined.   
We recall that then $\EV\colon \DIDE_{[r,r']}^k\rightarrow C_*^{k+1}([r,r'],G)$ holds for $r<r'$ and $k\in \NN\sqcup\{\infty\}$, cf.\ Corollary 4 in \cite{RGM}. 

\subsubsection{The Product Integral} 
The product integral is given by
\begin{align*}
	\textstyle\innt_s^t\phi:= \EV\big(\phi|_{[s,t]}\big)(t)\in G\qquad\quad \forall \: \phi\in \bigsqcup_{ r\leq s< t\leq r'}\DIDE_{[r,r']},
\end{align*}
and we let $\innt\phi\equiv\innt_r^{r'}\phi$ as well as $\innt_c^c\phi:= e$ for $\phi\in \DIDE_{[r,r']}$ and $c\in [r,r']$. We furthermore set
\begin{align*}
	\textstyle\evol_\kk\equiv \innt\big|_{\DIDE^k_{[0,1]}} 
	\qquad\quad\forall\:  k\in \NN\sqcup\{\lip,\infty\}.
\end{align*}
Then, \eqref{fgfggfsss} implies the following elementary identities, cf.,  \cite{HGGG,MK} or Sect.\ 3.5.2 in \cite{RGM}.
\begin{custompr}{D}
\label{sddssddsxyxyxyxyyx}
Let $[r,r']$ be an interval. Then, the following assertions hold:
	\begingroup
\setlength{\leftmargini}{17pt}
{
\renewcommand{\theenumi}{\alph{enumi})} 
\renewcommand{\labelenumi}{\theenumi}
\begin{enumerate}
\item
\label{kdsasaasassaas}
For each $\phi,\psi\in \DIDE_{[r,r']}$, we have $\phi+\Ad_{\innt_r^\bullet\phi}(\psi)\in \DIDE_{[r,r']}$, with
\begin{align*}
	\textstyle\innt_r^t \phi \cdot \innt_r^t\psi=\innt_r^t (\phi+\Ad_{\innt_r^\bullet\phi}(\psi)).
\end{align*}
	\vspace{-18pt}
\item
\label{pogfpogfaaa}
For each $\phi\in \DIDE_{[r,r']}$, we have $-\Ad_{[\innt_r^\bullet\phi]^{-1}}(\phi)\in \DIDE_{[r,r']}$, with
\begin{align*}
	\textstyle\big[\innt_r^t \phi\big]^{-1}=\innt_r^t-\Ad_{[\innt_r^\bullet\phi]^{-1}}(\phi).
\end{align*}
	\vspace{-18pt}
\item
\label{pogfpogf}
\hspace{4pt}For $r=t_0<{\dots}<t_n=r'$ and $\phi\in \DIDE_{[r,r']}$, we have 
	\begin{align*}
		\textstyle\innt_r^t\phi=\innt_{t_{p}}^t\! \phi\cdot \innt_{t_{p-1}}^{t_{p}} \!\phi \cdot {\dots} \cdot \innt_{r}^{t_1}\!\phi\qquad\quad\forall\:t\in (t_p,t_{p+1}],\:\: p=0,\dots,n-1.
	\end{align*}
		\vspace{-15pt}
\item
\label{subst}
	\hspace{4pt}For $\varrho\colon [\ell,\ell']\rightarrow [r,r']$ 
of class $C^1$ and $\phi\in \DIDE_{[r,r']}$, we have $\dot\varrho\cdot (\phi\cp\varrho)\in \DIDE_{[\ell,\ell']}$, with
\begin{align*}
	 \textstyle\innt_r^{\varrho(\bullet)}\phi=\big[\innt_\ell^\bullet\dot\varrho\cdot (\phi\cp\varrho)\he\big]\cdot \big[\innt_r^{\varrho(\ell)}\phi\he\big].
\end{align*} 
\end{enumerate}}
\endgroup
\end{custompr} 
\begin{example}[The Inverse]
\label{fdpofdopdpof}
For $r<r'$ fixed, we let  
	$\varrho\colon [r,r']\ni t\mapsto r +r' -t\in [r,r']$, and define   
\begin{align}
\label{podspodspoaaa}
	\textstyle \DIDE_{[r,r']}\ni \inverse{\phi}:=\dot\varrho\cdot \phi\cp\varrho \colon [r,r']\ni t\mapsto - \phi(r+r'-t)\in \mg\qquad\quad\forall\:\phi\in \DIDE_{[r,r']}. 
\end{align}
We let $[\ell,\ell']\equiv[r,r']$; and obtain from Part \ref{subst} of Proposition \ref{sddssddsxyxyxyxyyx} that
\begin{align}
\label{pfifpfpofdpofd}
	\textstyle e=\innt_r^{\varrho(r')}\phi\stackrel{\ref{subst}}{=}\big[\innt_{r}^{r'} \inverse{\phi}\big] \cdot \big[\innt_r^{r'}\phi  \big]\qquad\quad\text{holds, thus}\qquad\quad [\innt \phi]^{-1}=\innt \inverse{\phi},
\end{align}
which will be useful for our argumentation in Sect.\ \ref{pogfpogfa}.
\hspace*{\fill}$\ddagger$
\end{example}
\noindent 
Now, for $r<r'$ and $k\in \NN\sqcup\{\infty\}$ fixed, we let $\DP^k([r,r'],\mg)$ denote the set of all $\phi\colon [r,r']\rightarrow \mg$, such that there exist $r=t_0<{\dots}<t_m=r'$ ($m\geq 1$) and $\phi[p]\in \DIDE^k_{[t_p,t_{p+1}]}$ for $p=0,\dots,m-1$ with
\begin{align}
\label{opopooppo}
	\phi|_{(t_p,t_{p+1})}=\phi[p]|_{(t_p,t_{p+1})}\qquad\quad\forall\: p=0,\dots,m-1.
\end{align}  
In this situation, we define $\innt_r^r\phi:=e$, as well as 
\begin{align*}
	\textstyle\innt_r^t\phi&\textstyle:=\innt_{t_{p}}^t \phi[p] \cdot \innt_{t_{p-1}}^{t_p} \phi[p-1]\cdot {\dots} \cdot \innt_{r}^{t_1}\phi[0]\qquad\quad \forall\: t\in (t_{p}, t_{p+1}],\:\: p=0,\dots,m-1.
\end{align*} 
A standard refinement argument in combination with Part \ref{pogfpogf} of Proposition \ref{sddssddsxyxyxyxyyx} then shows that this is well defined, i.e., independent of any choices we have made. 
\subsubsection{Continuity and Semiregularity}
For $k\in \NN\sqcup\{\lip,\infty\}$, we say that $G$ is  
\begingroup
\setlength{\leftmargini}{12pt}
\begin{itemize}
\item
k-continuous \defff $\evol_\kk$ 
is \emph{$C^k$-continuous}; thus, continuous  
w.r.t.\ the seminorms \eqref{kjfdkjfkjfkjd}.
\vspace{2pt} 
\item
\emph{$C^k$-semiregular} \defff $\DIDE^k_{[0,1]}=C^k([0,1],\mg)$ holds.
\item
\emph{$C^k$-regular} \defff $G$ is $C^k$-semiregular, and $\evol_\kk$ 
is smooth w.r.t.\ the $C^k$-topology. 
\end{itemize}
\endgroup
\noindent
We define $\phi_X\colon \RR\ni t\mapsto X\in \mg$ for each $X\in \mg$; and remark that
\begin{remark} 
It is straightforward from the properties of the right logarithmic derivative (cf. Lemma 11 in \cite{RGM}) that for each $X\in \mg$ with $\phi_X|_{[0,1]}\in \DIDE_{[0,1]}$, we have $\phi_X|_{[r,r']}\in \DIDE_{[r,r']}$ for all $r<r'$. 
\hspace*{\fill}$\ddagger$
\end{remark}
\noindent
We say that $G$ admits an exponential map \defff $\phi_X|_{[0,1]}\in \DIDE_{[0,1]}$ holds for each $X\in \mg$; and define $\exp_G\colon \mg\ni X\mapsto \innt \phi_X|_{[0,1]}$ in this case.\footnote{Observe that we do not impose any differentiability\slash smoothness presumptions on $\exp_G$.} 

\subsection{The Riemann Integral}
\label{opsdpods}
Let $F$ be a Hausdorff locally convex vector space with system of continuous seminorms $\SEMM$, and completion $\comp{F}$. 
The Riemann integral of $\gamma\in C^0([r,r'],F)$ (for $r<r'$) is  denoted by 
$\int \gamma(s) \:\dd s\in \comp{F}$.\footnote{We explicitly remark at this point that the Riemann integral can be defined exactly as in the finite dimensional case; namely, as a limit of Riemann sums. Details can be found, e.g., in Sect.\ 2 in  \cite{COS}.} 
We define 
\begin{align*}
	\textstyle\int_a^b \gamma(s)\:\dd s:= \int \gamma|_{[a,b]}(s) \:\dd s\qquad\quad\:\int_b^a \gamma(s) \:\dd s:= - \int_a^b \gamma(s) \:\dd s\qquad\quad\:
	 \int_c^c \gamma(s)\: \dd s:=0\qquad
\end{align*}
for $r\leq a<b\leq r'$, $c\in [r,r']$. Clearly, the Riemann integral is linear, with	
\begin{align*}
	\textstyle\int_a^c \gamma(s) \:\dd s&\textstyle=\int_a^b \gamma(s)\:\dd s+ \int_b^c \gamma(s)\:\dd s\qquad\quad \forall\: r\leq a< b< c\leq r'.
\end{align*}
Moreover, we have
	 \begin{align}
	\label{isdsdoisdiosd}
		\textstyle\gamma(t)-\gamma(r)\hspace{4pt}&\textstyle=\int_r^t \dot\gamma(s)\:\dd s\\
	\label{isdsdoisdiosd1}
		\qq(\gamma(t)-\gamma(r))&\textstyle\leq \int_r^t \qq(\dot\gamma(s))\: \dd s\qquad\quad\forall\: \qq\in \SEMM,
	\end{align}
	for all $\gamma\in C^1([r,r'],F)$ and $t\in [r,r']$; as well as
	\begin{align}
\label{substitRI}
	\textstyle\int_r^{\varrho(\bullet)} \gamma(s)\: \dd s=\int_\ell^\bullet \dot\varrho(s)\cdot \gamma(\varrho(s))\:\dd s
\end{align}
for each $\gamma\in C^0([r,r'],F)$, and each $\varrho\colon [\ell,\ell'] \rightarrow [r,r']$ of class $C^1$ with $\varrho(\ell)=r$ and $\varrho(\ell')=r'$.
	
\subsection{Some Estimates}
We recall that, cf.\ Sect.\ 3.4.1 in \cite{RGM} and Corollary 1 in Sect.\ 3.2 in \cite{RGM}:
\begingroup
\setlength{\leftmargini}{19pt}
{
\renewcommand{\theenumi}{\roman{enumi})} 
\renewcommand{\labelenumi}{\theenumi}
\begin{enumerate}
\item
\label{as1}
For each compact $\compact\subseteq G$, and each $\vv\in \SEM$, there exists some $\vv\leq \ww\in \SEM$ with
\begin{align*}
	\vvv\cp \Ad_g\leq \www\qquad\quad\forall\: g\in \compact.
\end{align*} 
\item
\label{as2}
Assume that $\im[\mu]\subseteq \U$ holds for $\mu\in C^1([r,r'],G)$. Then, we have 
\begin{align}
\label{kldlkdldsl}
	\Der(\mu)=\dermapdiff(\chart\cp\mu,\partial_t(\chart\cp\mu))
\end{align}      
for the smooth map
\begin{align*}
	\dermapdiff\colon& \V\times E\rightarrow \mg,\qquad (x,X)\mapsto \dd_{\chartinv(x)}\RT_{[\chartinv(x)]^{-1}}(\dd_x\chartinv(X)).
\end{align*}
Moreover, for each $n\in \NN$, the map\footnote{Here, $[\partial_1]^{n}$ denotes the $n$-times iterated partial derivative w.r.t.\ the first argument, confer also Part \ref{productrule} \eqref{fdfdddfdfdfdddhjfdfdff} in Appendix \ref{Diffcalc}.} 
\begin{align*}
	\dermapdiff[n]:=[\partial_1]^n\dermapdiff\colon \V\times E^{n+1}\rightarrow \mg
\end{align*}
is continuous, as well as $n+1$-multilinear in the second factor. Then, for $\ww\in \SEM$ and $\dindq\in \NN$ fixed, there exists some $\ww\leq \qq\in \SEM$, such that
\begin{align}
\label{omegaklla}
	(\www\cp\dermapdiff[n])(x,X_1,\dots,X_{n+1})\leq \qq(X_1)\cdot {\dots}\cdot \qq(X_{n+1})
\end{align} 
holds for all $x\in \OB_{\qq,1}$, $X_1,{\dots},X_{n+1}\in E$, and $0\leq n\leq \dindq$.
\item
\label{as3}
Assume that $\im[\mu]\subseteq \U$ holds for $\mu\in C^1([r,r'],G)$. Then, we have
\begin{align}
\label{ixxxsdsdoisdiosd}
	\partial_t\he(\chart\cp\mu)&=\dermapinvdiff(\chart\cp\mu,\Der(\mu))\\
\label{ixxxsdsdoisdiosda}
	\stackrel{\eqref{isdsdoisdiosd}}{\Longrightarrow}\qquad\qquad \chart\cp\mu&\textstyle =\int_r^\bullet \dermapinvdiff((\chart\cp\mu)(s),\Der(\mu)(s))\: \dd s\qquad\qquad
\end{align}   
for the smooth map
\begin{align*}
	\dermapinvdiff\colon& \V\times \mg\rightarrow E,\qquad (x,X)\mapsto \big(\dd_{\chartinv(x)}\chart\cp \dd_{e}\RT_{\chartinv(x)}\big)(X)
\end{align*}
that is linear in the second argument. 
Then, for each $\qq\in \SEM$, there exists some $\qq\leq\mm\in \SEM$ with
\begin{align}
\label{sadsndsdsnmdsds}
	(\qq\cp \dermapinvdiff)(x,X)\leq \mm(X)\qquad\quad\forall\: x\in \OB_{\mm,1},\:\: X\in \mg.
\end{align}
For each $\mu\in C^1([r,r'],G)$ with $\im[\chart\cp\mu]\subseteq \OB_{\mm,1}$, we thus obtain from \eqref{ixxxsdsdoisdiosd}, \eqref{isdsdoisdiosd}, and \eqref{isdsdoisdiosd1} that
\begin{align}
\label{sadsndsdsnmdsdsa}
\begin{split}
	\textstyle\qq(\chart\cp\mu)&\stackrel{}{=}\textstyle\qq\big(\int_r^\bullet \dermapinvdiff((\chart\cp\mu)(s),\Der(\mu)(s))\:\dd s\big)\\
	&\textstyle\leq \int \mmm(\Der(\mu)(s))\:\dd s\\
	&\textstyle\leq |r'-r|\cdot \mmm_\infty(\Der(\mu)).
\end{split}
\end{align}
\end{enumerate}}
\endgroup

\subsection{Completeness and Approximation}
\label{uepdspodspodsapoa}
We now finally list some definitions from \cite{RGM} that will occur in Sect.\ \ref{podspodspodspo}. We furthermore supplement Lemma 29 in \cite{RGM} by a technical detail (that is already part of the proof given there)   
in order to make it compatible with the definition of constrictedness \eqref{assssasaasasdsdsds} given in Sect.\ \ref{lkfdlkfdlkdflkfdlfd}, cf.\ also Remark \ref{dffd}.\ref{r2}.
\subsubsection{Completeness}
\label{nmsnmdsnmnmds}
Let $\{g_n\}_{n\in \NN}\subseteq G$ be a sequence.
\begingroup
\setlength{\leftmargini}{11pt}
\begin{itemize}
\item
$\{g_n\}_{n\in \NN}$ is said to be a {Cauchy sequence} \defff for each $\pp\in \SEM$ and $\varepsilon>0$, there exists some $p\in \NN$ with
	$(\pp\cp\chart)(g_m^{-1}\cdot g_n)\leq \varepsilon$ for all $m,n\geq p$.
\item
$\{g_n\}_{n\in \NN}$ is said to be a {Mackey-Cauchy sequence} \defff 
 \begin{align}
 \label{kjdskjskjds}
 	(\pp\cp\chart)(g^{-1}_m\cdot g_{n})\leq \mackeyconst_\pp\cdot \lambda_{m,n}\qquad\quad\forall\: m,n\geq \mackeyindex_\pp,\:\: \pp\in\SEM
 \end{align}
 holds for certain $\{\mackeyconst_\pp\}_{\pp\in \SEM}\subseteq \RR_{\geq 0}$, $\{\mackeyindex_\pp\}_{\pp\in \SEM}\subseteq \NN$, and $\RR_{\geq 0}\supseteq \{\lambda_{m,n}\}_{(m,n)\in \NN\times \NN}\rightarrow 0$.  
\end{itemize}
\endgroup
\noindent
Both definitions are independent of the explicit choice of the chart $\chart$, because coordinate changes are locally Lipschitz continuous (for details confer Remark 3 and Appendix D.1 in \cite{RGM}).
\vspace{6pt}

\noindent
Then,  
\begingroup
\setlength{\leftmargini}{11pt}
\begin{itemize}
\item
$G$ is said to be {sequentially complete} \defff each Cauchy sequence in $G$ converges in $G$.
\item
$G$ is said to be {Mackey complete} \defff each Mackey-Cauchy sequence in $G$ converges in $G$.
\end{itemize}
\endgroup
\begin{remark}
\label{nmdfnmfdnmfdfd} 
Evidently, each Mackey-Cauchy sequence is a Cauchy sequence; so that $G$ is Mackey complete if $G$ is sequentially complete -- Of course, the converse statements usually do not hold. 
 We furthermore remark that \eqref{kjdskjskjds} is equivalent to require that 
 \begin{align*}
 	(\pp\cp\chart)(g^{-1}_m\cdot g_{n})\leq \mackeyconst_\pp\cdot \lambda_{m,n}\qquad\quad\forall\: m,n\in \NN,\:\: \pp\in\SEM
 \end{align*}
 holds for certain $\{\mackeyconst_\pp\}_{\pp\in \SEM}\subseteq \RR_{\geq 0}$, and $\RR_{> 0}\supseteq \{\lambda_{m,n}\}_{(m,n)\in \NN\times \NN}\rightarrow 0$ -- Hence, the constants $\{\mackeyindex_\pp\}_{\pp\in \SEM}$ can be circumvented if $\lambda_{m,n}>0$ is presumed for all $m,n\in \NN$. Evidently, then the latter definition is equivalent to the definition traditionally used in the  literature (cf., e.g., Sect.\ 2 in \cite{COS}).  
 Anyhow, the formulation \eqref{kjdskjskjds} has been introduced in \cite{RGM} for practical reasons; and we here will stick to these conventions for reasons of consistency.  
 \hspace*{\fill}$\ddagger$
\end{remark}
\begingroup
\setlength{\leftmargini}{11pt}
\begin{itemize}
\item
We say that $\mg$ is {sequentially\slash Mackey complete} \defff $\mg$ is {sequentially\slash Mackey complete} when considered as the Lie group $(\mg,+)$.\footnote{According to Theorem  2.14 in \cite{COS}, in the Mackey case this definition is equivalent to the definition of Mackey completeness used in Sect.\ \ref{sdlkdslkdslkds}.} 
\item
We say that $\mg$ is {integral complete} \defff the Riemann integral $\int \phi(s)\: \dd s\in \mg$ exists for each $\phi\in C^0([0,1],\mg)$.
\end{itemize}
\endgroup

\subsubsection{Approximation}
For $r<r'$, we  
let $\CP^0([r,r'],\mg)$ denote the set of all $\gamma\colon [r,r']\rightarrow \mg$ such that there exist $r=t_0<{\dots}<t_n=r'$ as well as $\gamma[p]\in C^0([t_p,t_{p+1}],\mg)$ for $p=0,\dots,n-1$ with
\begin{align*}
	\gamma|_{(t_p,t_{p+1})}=\gamma[p]|_{(t_p,t_{p+1})}\qquad\quad\forall\: p=0,\dots,n-1.
\end{align*} 
In analogy to Sect.\ \ref{nmsnmdsnmnmds}, we say that  $\{\phi_n\}_{n\in \NN}\subseteq \CP^0([r,r'],\mg)$ is a
\begingroup
\setlength{\leftmargini}{11pt}
\begin{itemize}
\item
{Cauchy sequence} \defff for each $\pp\in \SEM$ and $\varepsilon>0$, there exists some $p\in \NN$ with
	$\ppp_\infty(\phi_m-\phi_n)\leq \varepsilon$ for all $m,n\geq p$.
\item
{Mackey-Cauchy sequence} \defff 
\begin{align*}
	\ppp_\infty(\phi_m-\phi_n)\leq \mackeyconst_\pp\cdot \lambda_{m,n} \qquad\quad\forall\: m,n\geq \mackeyindex_\pp,\:\:\pp\in \SEM
\end{align*}
holds for certain $\{\mackeyconst_\pp\}_{\pp\in \SEM}\subseteq \RR_{\geq 0}$, $\{\mackeyindex_\pp\}_{\pp\in \SEM}\subseteq \NN$, and $\RR_{\geq 0}\supseteq \{\lambda_{m,n}\}_{(m,n)\in \NN\times \NN}\rightarrow 0$.
\end{itemize}
\endgroup
\noindent
We say that $\CP^0([r,r'],\mg)\supseteq \{\phi_n\}_{n\in \NN}\rightarrow \phi\in \CP^0([r,r'],\mg)$ {converges uniformly} \defff 
\begin{align*}
	\textstyle\lim_{n\rightarrow \infty}\ppp_\infty(\phi-\phi_n)=0 \qquad\quad\text{holds for each}\qquad\quad \pp\in \SEM.
\end{align*}
Evidently, we have $\DP^0([r,r'],\mg),C^0([r,r'],\mg)\subseteq \CP^0([r,r'],\mg)$.
\begin{lemma}
\label{jshjsdahjsd}
\begingroup
\setlength{\leftmargini}{17pt}
\begin{enumerate}
\item
For each $\phi\in C^0([r,r'],\mg)$, there exists a Cauchy sequence $\{\phi_n\}_{n\in \NN}\subseteq \DP^\infty([r,r'],\mg)$ with $\{\phi_n\}_{n\in \NN}\rightarrow \phi$ uniformly, such that $\bigcup_{n\in \NN}\im[\phi_n]$ is contained in a metrizable compact subset $\compacto\subseteq \mg$.
\item
For each $\phi\in C^\lip([r,r'],\mg)$, there exists a Mackey-Cauchy sequence $\{\phi_n\}_{n\in \NN}\subseteq \DP^\infty([r,r'],\mg)$  with $\{\phi_n\}_{n\in \NN}\rightarrow \phi$ uniformly, such that $\bigcup_{n\in \NN}\im[\phi_n]$ is contained in a metrizable compact subset $\compacto\subseteq \mg$.
\end{enumerate}
\endgroup
\end{lemma}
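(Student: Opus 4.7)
The lemma's core approximation content is already due to \cite{RGM}, Lemma~29: that result supplies, for any $\phi\in C^0([r,r'],\mg)$ (resp.\ $\phi\in C^\lip([r,r'],\mg)$), a sequence $\{\phi_n\}_{n\in\NN}\subseteq\DP^\infty([r,r'],\mg)$ converging uniformly to $\phi$ and forming a Cauchy (resp.\ Mackey-Cauchy) sequence in the seminorms \eqref{kjfdkjfkjfkjd}. The only new content relative to that result is the assertion that $\bigcup_{n\in\NN}\im[\phi_n]$ admits a metrizable compact hull $\compacto\subseteq\mg$. My plan therefore takes the $\{\phi_n\}$ from \cite{RGM}, Lemma~29, as given and concentrates exclusively on producing the set $\compacto$.

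To this end, I would bundle the entire approximating family and its limit into a single auxiliary map
\begin{align*}
	\Psi\colon K\times[r,r']\longrightarrow\mg,\qquad (0,t)\mapsto\phi(t),\quad(1/n,t)\mapsto\phi_n(t),
\end{align*}
where $K:=\{0\}\cup\{1/n\mid n\geq 1\}\subseteq\RR$. Continuity of $\Psi$ at interior points $(1/n,t_0)$ is automatic, since every sequence in $K\times[r,r']$ converging to such a point is eventually of the form $(1/n,t_k)$, reducing matters to continuity of $\phi_n$. Continuity at boundary points $(0,t_0)$ follows from uniform convergence and continuity of $\phi$: for $(1/n_k,t_k)\to(0,t_0)$ with $n_k\to\infty$ the triangle estimate
\begin{align*}
\ppp(\Psi(1/n_k,t_k)-\phi(t_0))\leq\ppp_\infty(\phi_{n_k}-\phi)+\ppp(\phi(t_k)-\phi(t_0))
\end{align*}
drives the left side to zero for every $\pp\in\SEM$. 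Setting $\compacto:=\im[\Psi]$ then yields a compact set with $\im[\phi]\cup\bigcup_{n\in\NN}\im[\phi_n]\subseteq\compacto$ by construction.

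The sole non-routine step -- and what I expect to be the main obstacle the reader pauses over -- is the metrizability of $\compacto$. This I would deduce from the classical topological fact that every Hausdorff continuous image of a compact metric space is compact metrizable: since the source $K\times[r,r']$ is itself a continuous image of the Cantor set, so is $\compacto$, and by the Alexandroff-Urysohn characterization the Hausdorff continuous images of the Cantor set are precisely the compact metrizable spaces. This argument is purely topological, independent of any Lie-theoretic structure, and once in place it instantly produces a $\compacto$ satisfying all requirements. With the Cauchy (Part~1) resp.\ Mackey-Cauchy (Part~2) properties inherited directly from \cite{RGM}, Lemma~29, both statements of the lemma follow.
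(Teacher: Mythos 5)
There is a genuine gap at the step where you assert continuity of $\Psi$. You treat the approximants $\phi_n$ supplied by Lemma~29 of \cite{RGM} as continuous curves, but they are elements of $\DP^\infty([r,r'],\mg)$: by the definition in the paper such a map is only required to agree with smooth curves on the \emph{open} subintervals $(t_{n,p},t_{n,p+1})$ of some partition, and it generically has jump discontinuities at the break points (indeed, the construction behind Lemma~29 sets $\phi_n|_{[t_{n,p},t_{n,p+1})}:=\Phi(\cdot-t_{n,p},t_{n,p})$ for a continuous two--parameter map $\Phi$, which jumps at each $t_{n,p+1}$). Hence your claim that continuity of $\Psi$ at $(1/n,t_0)$ ``reduces to continuity of $\phi_n$'' fails, $\im[\Psi]$ is not exhibited as a continuous image of a compact space, and the definition $\compacto:=\im[\Psi]$ does not produce a compact set. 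The obstruction is not cosmetic: from uniform convergence alone one only gets that $\bigcup_n\im[\phi_n]$ is totally bounded (for each $\pp\in\SEM$ and $\varepsilon>0$ it is contained in finitely many relatively compact sets together with $\im[\phi]+\OB_{\pp,\varepsilon}$), and since no completeness hypothesis is imposed on $\mg$ in this paper, total boundedness does not give a compact closure. A repair along your lines is possible --- replace $K\times[r,r']$ by a suitably compactified disjoint union of the closed subintervals of all the partitions and map each piece by the continuous representative $\phi_n[p]$ --- but this requires precisely the structural information about the $\phi_n$ that the statement of Lemma~29 in \cite{RGM} does not provide, so the sequence cannot be used as a black box.

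This is exactly why the paper's proof reaches into the construction: there, \emph{all} approximants take their values in $\im[\Phi]$ for a single continuous map $\Phi$ on $[0,\Delta]\times[r,r']$, so $\compacto:=\im[\Phi]$ is metrizable compact by Remark~\ref{dffd}.\ref{r3}. Your metrizability argument (Hausdorff continuous images of compact metrizable spaces are compact metrizable, via the Cantor set) is correct and is essentially the content of that remark; the problem is solely that you apply it to a map that is not continuous.
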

\begin{proof}
In the proof of Lemma 29 in \cite{RGM}, the approximating sequence $\{\phi_n\}_{n\in \NN}\subseteq \DP^\infty([r,r'],\mg)$ is  constructed from the continuous map
\begin{align*}
	\Phi(t,t'):= \w(t\cdot \dd_e\chart(\phi(t')),\dd_e\chart(\phi(t')))\qquad\quad\forall\: (t,t')\in [0,\Delta]\times [r,r'],
\end{align*}
for some suitably small $\Delta>0$. This is done in such a way that $\im[\phi_n]\subseteq \im[\Phi]=:\compacto$ holds for each $n\in \NN$.\footnote{Roughly speaking, one chooses suitable decompositions  $r=t_{n,0}<{\dots}<t_{n,n}=r'$; and defines $\phi_n|_{[t_{n,p},t_{n,p+1})}:=\Phi(\cdot - t_{n,p},t_{n,p})$ for $p=0,\dots,n-1$, as well as  $\phi|_{[t_{n,n-1},t_{n,n}]}:=\Phi(\cdot-t_{n,n-1},t_{n,n-1})$.} The claim is thus clear from Remark \ref{dffd}.\ref{r3}. 
\end{proof}
\subsection{Piecewise Differentiable Curves} 
By an element in $\CP_0^1([r,r'],\mg)$ (for $r<r'$), we understand a decomposition $r=t_0<{\dots}< t_q=r'$ ($q\geq 1$) together with a collection $\alpha\equiv\{\alpha[p]\}_{0\leq p\leq q-1}$ of maps 
$\alpha[p]\in C^1([t_p,t_{p+1}],\mg)$ for $p=0,\dots,q-1$, such that
\begin{align}
\label{lkfdlkfdlkfd}
	\alpha[p](t_{p+1})=\alpha[p+1](t_{p+1})\qquad\quad\forall\: p=0,\dots,q-2
\end{align}
holds. In this case, we  
define $\ul{\alpha}\in C^0([r,r'],\mg)$ by   
\begin{align*}
	\ul{\alpha}|_{[t_p,t_{p+1})}&:=\alpha[p]|_{[t_p,t_{p+1})}\qquad\forall\: p=0,\dots,q-2\qquad\quad\text{as well as}\qquad\quad \ul{\alpha}|_{[t_{q-1},t_{q}]}:=\alpha[q-1];
\end{align*} 
and, for each $\phi\in \DIDE_{[r,r']}$, we let 
\begin{align}
\textstyle\mm_\infty(\phi,\alpha)&\textstyle:= \max\{0\leq p\leq q-1\:|\: \mm_\infty(\dot\alpha[p]-\bl\phi|_{[t_p,t_{p+1}]},\alpha[p]\br)\}\qquad\text{for each}\qquad \mm \in \SEM
\nonumber\\[5pt]
\label{ykjdsjkjds}
\begin{split}
\textstyle\AI(\phi,\alpha)(t)&\textstyle:= \int_{t_p}^t \Ad_{[\innt_r^s\phi]^{-1}}(\dot\alpha[p](s)-\bl\phi(s),\alpha[p](s)\br) \:\dd s\\
	 &\textstyle\quad\hspace{4pt}+ \int_{t_{p-1}}^{t_p} \Ad_{[\innt_{r}^{s}\phi]^{-1}}(\dot\alpha[p-1](s)-\bl\phi(s),\alpha[p-1](s)\br) \:\dd s\\
	 &\textstyle\quad\hspace{4pt}+ {\dots}\\
	  &\textstyle\quad\hspace{4pt}+ \int_r^{t_1} \Ad_{[\innt_r^s\phi]^{-1}}(\dot\alpha[0](s)-\bl\phi(s),\alpha[0](s)\br) \:\dd s
\end{split}
\end{align}
with
$$
t \in 
\begin{cases}
[t_p,t_{p+1})\qquad \text{for}\qquad 0 \leq p\leq q-2,\\
[t_{q-1},t_{q}]\qquad\hspace{1.5pt} \text{for}\qquad p\equiv q-1.
\end{cases}
$$
Moreover, we write $\{\alpha_n\}_{n\in \NN} \slim{\phi} 0$ for $\{\alpha_n\}_{n\in \NN}\subseteq \CP_0^1([r,r'],\mg)$ \defff 
\begin{align*}
	\textstyle\lim_n \mm_\infty(\phi,\alpha_n)=0\qquad\:\:\text{holds, for each}\qquad\:\: \mm\in \SEM;
\end{align*}
and we will consider $C^1([r,r'],\mg)$ as a subset of $\CP_0^1([r,r'],\mg)$ in the obvious way.

\section{The Adjoint Action}
\label{sdddsdsdsds}
In this section, we prove an approximation property of the adjoint action; from which we then derive 
certain estimates under additional continuity presumptions imposed on the Lie bracket.    
\subsection{Basic Facts and Definitions}
For $\phi\in \DP^0([r,r'],\mg)$ fixed, we define 
\begin{align*}
	\Add_{\phi^\pm}\colon [r,r']\ni t\mapsto \Add^t_{\phi^\pm}:= \Ad_{[\innt_r^t \phi]^\pm};
\end{align*}
and let $\Add_\phi\equiv \Add_{\phi^+}$  
 as well as $\Add^t_\phi\equiv \Add^t_{\phi^+}$ for each $t\in [r,r']$.  
Moreover, for 
$\phi_1,\dots,\phi_d\in \DIDE_{[r,r']}$, $\psi_1,\dots,\psi_{d+2}\in C^0([r,r'],\mg)$, and $m_1,\dots,m_{d+1}\in \NN$ given, we consider 
\begin{align}
\label{ndsnsnmsdnmdsnm}
	\beta=\big(\com{\psi_1}^{m_1}\cp \Add_{\phi_1}\cp{\dots}\cp \com{\psi_d}^{m_d}\cp \Add_{\phi_d}\cp \com{\psi_{d+1}}^{m_{d+1}}\big)(\psi_{d+2})
\end{align}
as an element in $C^0([r,r'],\mg)$. We observe that $\beta\in C^{k+1}([r,r'],\mg)$ holds for $k\in \NN\sqcup \{\infty\}$ if we have $\phi_1,\dots,\phi_d\in \DIDE^k_{[r,r']}$ and $\psi_1,\dots,\psi_{d+2}\in C^{k+1}([r,r'],\mg)$. 
\vspace{6pt}

\noindent
We let $\frac{\dd}{\dd h}\big|^{+}_{h=0}$ denote the right sided  derivative, and conclude from \eqref{podspodspods} that
\begin{align}
\label{nmsndsnmds}
	\textstyle\frac{\dd}{\dd h}\big|^{+}_{h=0}\: \Ad_{\mu(\ell+h)}(Y)=\bl\dot\mu(\ell),Y\br\qquad\quad\forall\: Y\in \mg
\end{align}
holds for each $\mu \in C^1([\ell,\ell'])$ with $\mu(\ell)=e$. 
We obtain the following statements.
\begin{lemma}
\label{khdhjdhdkjkjdkjdjkd}
For $r<r'$ fixed, we have
\begin{align}
\label{vcvcvcvcvc}
	\partial_t (\Add_\phi(\psi))=\bl \phi,\Add_\phi(\psi)\br + \Add_\phi(\dot\psi)\qquad\quad\forall\: \phi\in \DIDE_{[r,r']},\:\: \psi\in C^1([r,r'],\mg).
\end{align}
\end{lemma}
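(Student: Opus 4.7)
The plan is to apply the chain rule directly to the map $t \mapsto \Ad_{\mu(t)}(\psi(t))$ where $\mu := \EV(\phi) = \innt_r^\bullet \phi \in C^1([r,r'],G)$ (with $\mu(r)=e$ and $\Der(\mu)=\phi$), using that $\Ad$ is smooth and linear in its second argument, and that the crucial derivative of $g \mapsto \Ad_g(Y)$ at the identity is given by \eqref{nmsndsnmds}. Since $\Ad\colon G\times \mg \to \mg$ is smooth, $\mu \in C^1$, and $\psi \in C^1$, the composition is of class $C^1$, and the chain rule splits $\partial_t(\Ad_{\mu(t)}(\psi(t)))$ into two pieces corresponding to the two arguments.

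For the ``second slot'' piece, I would use linearity of $\Ad_g$ in the Lie algebra argument to get $\Ad_{\mu(t)}(\dot\psi(t)) = \Add_\phi(\dot\psi)(t)$. For the ``first slot'' piece, I would exploit the multiplicativity $\Ad_{g h}=\Ad_{g}\cp \Ad_h$ by writing, for fixed $t$,
\begin{align*}
	\mu(t+h)=\nu_t(h)\cdot \mu(t)\qquad\quad\text{with}\qquad\quad \nu_t(h):=\mu(t+h)\cdot \mu(t)^{-1}.
\end{align*}
Then $\nu_t(0)=e$, and from $\nu_t=\RT_{\mu(t)^{-1}}\cp\mu(t+\cdot)$ together with the definition of $\Der$ I obtain $\dot\nu_t(0)=\dd_{\mu(t)}\RT_{\mu(t)^{-1}}(\dot\mu(t))=\Der(\mu)(t)=\phi(t)$. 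Consequently
\begin{align*}
	\Ad_{\mu(t+h)}(\psi(t))= \Ad_{\nu_t(h)}\bigl(\Ad_{\mu(t)}(\psi(t))\bigr),
\end{align*}
and \eqref{nmsndsnmds} applied with $\mu\rightsquigarrow \nu_t$, $\ell\equiv 0$, and $Y=\Ad_{\mu(t)}(\psi(t))=\Add_\phi(\psi)(t)$ gives the right sided derivative at $h=0$:
\begin{align*}
	\tfrac{\dd}{\dd h}\big|^{+}_{h=0} \Ad_{\mu(t+h)}(\psi(t)) = \bl\phi(t),\Add_\phi(\psi)(t)\br.
\end{align*}

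Combining both contributions via the chain rule yields \eqref{vcvcvcvcvc} as a right sided derivative. I would close the argument by noting that the same reasoning applies verbatim to the left sided derivative (replacing $\nu_t(h)$ by $\mu(t)\mu(t-h)^{-1}$, or simply by running the same computation with $h<0$), so that both one-sided derivatives coincide and equal the continuous expression on the right hand side of \eqref{vcvcvcvcvc}, which in turn implies differentiability in the usual sense. The main technical point — and essentially the only nontrivial bit — is the identification $\dot\nu_t(0)=\phi(t)$, which reduces the smooth-$g$/linear-$Y$ chain rule to a direct application of \eqref{nmsndsnmds}; everything else is a bookkeeping exercise with the identities from \eqref{fgfggfsss} and the definition of $\Add_\phi$.
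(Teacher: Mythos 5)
Your proof is correct and follows essentially the same route as the paper: your factorization $\mu(t+h)=\nu_t(h)\cdot\mu(t)$ with $\nu_t(h)=\mu(t+h)\cdot\mu(t)^{-1}=\innt_t^{t+h}\phi$ is exactly the paper's use of Part \ref{pogfpogf} of Proposition \ref{sddssddsxyxyxyxyyx}, after which both arguments reduce the first-slot derivative to \eqref{nmsndsnmds} and handle the second slot by linearity via the product/chain rules. Your explicit treatment of the left-sided derivative is a slightly more careful version of the paper's appeal to continuity of both sides of \eqref{vcvcvcvcvc}; no gap.
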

\begin{proof}
For $t\in (r,r')$ fixed, we choose $\Delta>0$ with $t+[0,1]\cdot \Delta\subseteq (r,r')$; and obtain from \eqref{nmsndsnmds}, Part \ref{pogfpogf} of Proposition \ref{sddssddsxyxyxyxyyx}, and the parts  \ref{linear}, \ref{chainrule}, \ref{productrule} of Proposition \ref{iuiuiuiuuzuzuztztttrtrtr} that
\begin{align*}
	\textstyle \frac{\dd}{\dd h}\big|_{h=0}\:(\Add_\phi(\psi))(t+h)&\textstyle=\frac{\dd}{\dd h}\big|^{+}_{h=0}\: \Ad_{\innt_r^{t+h}\phi}(\psi(t+h))\\
	&=\textstyle \frac{\dd}{\dd h}\big|^{+}_{h=0}\: \Ad_{\innt_t^{t+h}\phi}\big(\Add_\phi^t(\psi(t+h))\big)\\
	&=\textstyle  \bl\phi(t),\Add^t_\phi(\psi(t))\br+ \Add^t_\phi(\dot\psi(t))
\end{align*}
holds. This shows that \eqref{vcvcvcvcvc} holds on $(r,r')$, so that the claim follows from continuity of both sides of  \eqref{vcvcvcvcvc}.
\end{proof}
\begin{lemma}
\label{fdfddfdf}
Let $\alpha\equiv\{\alpha[p]\}_{0\leq p\leq q-1}\in \CP_0^1([r,r'],\mg)$ and $\phi\in \DIDE_{[r,r']}$ be given. Then, (recall  \eqref{ykjdsjkjds})
\begin{align*}
	\textstyle\beta-\beta(r)=\AI(\phi,\alpha)\qquad\quad\text{holds for}\qquad\quad \beta:=\Add_{\phi^{-1}}\cp\ul{\alpha}\in C^0([r,r'],\mg).
\end{align*} 
\end{lemma}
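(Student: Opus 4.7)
The plan is to differentiate $\beta$ piecewise on each subinterval $[t_p, t_{p+1}]$, invoke the fundamental theorem of calculus \eqref{isdsdoisdiosd}, and then telescope using the compatibility condition \eqref{lkfdlkfdlkfd}. The core computational tool is Lemma \ref{khdhjdhdkjkjdkjdjkd}, which I would first recast in a ``dual'' form suited to $\Add_{\phi^{-1}}$.

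Concretely, I would begin by establishing the identity
\begin{align*}
\partial_t\bigl(\Add_{\phi^{-1}}(\psi)\bigr)=\Add_{\phi^{-1}}\!\bigl(\dot\psi-\bl\phi,\psi\br\bigr)\qquad\forall\:\psi\in C^1([r,r'],\mg).
\end{align*}
The curve $\Add_{\phi^{-1}}(\psi)$ is of class $C^1$ because $[\innt_r^\bullet\phi]^{-1}\in C^1([r,r'],G)$ and $\Ad$ is smooth. Differentiating the pointwise identity $\Add_\phi(\Add_{\phi^{-1}}(\psi))=\psi$ via Lemma \ref{khdhjdhdkjkjdkjdjkd} yields $\dot\psi=\bl\phi,\psi\br+\Add_\phi\bigl(\partial_t(\Add_{\phi^{-1}}(\psi))\bigr)$, after which applying $\Add_{\phi^{-1}}$ and exploiting that each $\Ad_g$ is a Lie algebra automorphism (so commutes with $\bl\cdot,\cdot\br$) gives the displayed formula.

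Next, for each $0\leq p\leq q-1$ I would set $\beta_p:=\Add_{\phi^{-1}}\cp\alpha[p]\in C^1([t_p,t_{p+1}],\mg)$. The identity just proved delivers
\begin{align*}
\dot\beta_p(t)=\Ad_{[\innt_r^t\phi]^{-1}}\!\bigl(\dot\alpha[p](t)-\bl\phi(t),\alpha[p](t)\br\bigr),
\end{align*}
and integrating this over $[t_p,t]$ by means of \eqref{isdsdoisdiosd} gives
\begin{align*}
\beta_p(t)-\beta_p(t_p)=\int_{t_p}^{t}\Ad_{[\innt_r^s\phi]^{-1}}\!\bigl(\dot\alpha[p](s)-\bl\phi(s),\alpha[p](s)\br\bigr)\,\dd s,\qquad t\in[t_p,t_{p+1}].
\end{align*}

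Finally, the compatibility condition \eqref{lkfdlkfdlkfd} forces $\beta_p(t_{p+1})=\beta_{p+1}(t_{p+1})$, so the pieces $\beta_p$ glue into the continuous curve $\beta=\Add_{\phi^{-1}}\cp\ul\alpha$, with $\beta(r)=\Ad_e(\alpha[0](r))=\alpha[0](r)$ since $\innt_r^r\phi=e$. For $t$ in the $p$-th subinterval, telescoping
\begin{align*}
\beta(t)-\beta(r)=\bigl(\beta_p(t)-\beta_p(t_p)\bigr)+\sum_{j=0}^{p-1}\bigl(\beta_j(t_{j+1})-\beta_j(t_j)\bigr)
\end{align*}
and substituting the integral expression from the previous step reproduces the definition \eqref{ykjdsjkjds} of $\AI(\phi,\alpha)(t)$ verbatim. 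No serious obstacle is anticipated here; the whole argument is driven by Lemma \ref{khdhjdhdkjkjdkjdjkd}, and the only genuine check is continuity across breakpoints, which is precisely what \eqref{lkfdlkfdlkfd} encodes.
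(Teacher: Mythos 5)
Your proof is correct, and its skeleton (piecewise differentiation, the fundamental theorem of calculus \eqref{isdsdoisdiosd}, and gluing via \eqref{lkfdlkfdlkfd}) coincides with the paper's. The one place where you genuinely diverge is the derivation of the key identity $\partial_t(\Add_{\phi^{-1}}(\psi))=\Add_{\phi^{-1}}(\dot\psi-\bl\phi,\psi\br)$: the paper redoes the one-sided-derivative computation from scratch, writing $[\innt_t^{t+h}\phi]^{-1}=\innt_t^{t+h}-\Ad_{[\innt_t^{\bullet}\phi]^{-1}}(\phi)$ via Part \ref{pogfpogfaaa} of Proposition \ref{sddssddsxyxyxyxyyx} and then applying \eqref{nmsndsnmds}, whereas you deduce the ``dual'' formula directly from Lemma \ref{khdhjdhdkjkjdkjdjkd} by differentiating $\Add_\phi\cp\Add_{\phi^{-1}}=\id$ and solving for $\partial_t(\Add_{\phi^{-1}}(\psi))$. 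Your route is shorter and reuses existing material, at the cost of having to justify that $\Add_{\phi^{-1}}(\psi)$ is $C^1$ before Lemma \ref{khdhjdhdkjkjdkjdjkd} may be applied to it -- which you do, correctly, via smoothness of $\Ad$ and inversion. Two small remarks: the appeal to $\Ad_g$ being a Lie algebra automorphism is superfluous (linearity of $\Add_{\phi^{-1}}$ already gives $\Add_{\phi^{-1}}(\dot\psi)-\Add_{\phi^{-1}}(\bl\phi,\psi\br)=\Add_{\phi^{-1}}(\dot\psi-\bl\phi,\psi\br)$); and when you apply the identity to $\alpha[p]\in C^1([t_p,t_{p+1}],\mg)$ while keeping the base point of $\Add_{\phi^{-1}}$ at $r$, you should note that this is legitimate because $\Ad_{[\innt_r^{t}\phi]^{-1}}=\Ad_{[\innt_r^{t_p}\phi]^{-1}}\cp\Ad_{[\innt_{t_p}^{t}\phi]^{-1}}$ factors through a constant continuous linear map (or, equivalently, extend $\alpha[p]$ to a $C^1$ curve on $[r,r']$ per the paper's conventions). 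Neither point is a gap.
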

\begin{proof}
Let 
$r=t_0<{\dots}<t_q=r'$ be the decomposition that corresponds to $\alpha\equiv\{\alpha[p]\}_{0\leq p\leq q-1}$. Then, for  
$t\in (t_p,t_{p+1})$, we choose $\Delta>0$ with $t+[0,1]\cdot \Delta\subseteq (t_p,t_{p+1})$; and 
		obtain from Part \ref{pogfpogfaaa} of Proposition \ref{sddssddsxyxyxyxyyx} that
\begin{align*}
	\textstyle\big[\innt_t^{t+h}\phi\:\big]^{-1}=\innt_t^{t+h}-\Ad_{[\innt_t^\bullet \phi]^{-1}}(\phi)\qquad\quad\forall\: 0<h\leq\Delta
\end{align*}
holds. 
Then, by \eqref{nmsndsnmds}, Part \ref{pogfpogf} of Proposition \ref{sddssddsxyxyxyxyyx}, and the parts \ref{linear}, \ref{chainrule}, \ref{productrule} of Proposition \ref{iuiuiuiuuzuzuztztttrtrtr}, we have
\begin{align*}
	\textstyle\dot\beta(t)&\textstyle=\frac{\dd}{\dd h}\big|^{+}_{h=0}\:\Ad_{[\innt_r^{t+h}\phi]^{-1}}(\alpha[p](t+h))\\
	&\textstyle=\frac{\dd}{\dd h}\big|^{+}_{h=0}\: \Add^t_{\phi^{-1}}\big(\Ad_{[\innt_t^{t+h}\phi]^{-1}}(\alpha[p](t+h))\big)\\
	&=\Add^t_{\phi^{-1}}(\bl-\phi(t),\alpha[p](t)\br+ \dot\alpha[p](t))\\
	&\textstyle= \Add^t_{\phi^{-1}}(\dot\alpha[p](t)-\bl\phi(t),\alpha[p](t)\br).
\end{align*}
For each $[\tau,t]\subseteq (t_p,t_{p+1})$, we thus obtain from \eqref{isdsdoisdiosd} that 
\begin{align}
\label{podsopdspodspodspo}
	\textstyle\beta(t)-\beta(\tau)= \int_\tau^t \Add^s_{\phi^{-1}}(\dot\alpha[p](s)-\bl\phi(s),\alpha[p](s)\br) \:\dd s
\end{align}
holds. The claim now follows from \eqref{ykjdsjkjds}, because $\beta$ is continuous, and because the integrand on the right hand side of  \eqref{podsopdspodspodspo} is continuous on $[t_p,t_{p+1}]$ for $p=0,\dots,q-1$.
\end{proof}
\begin{remark}[The Adjoint Equation]
The adjoint equation reads 
\begin{align*}
	\dot\alpha=\bl\phi,\alpha\br\quad\text{with}\quad \alpha(r)=Y\qquad\quad\text{for}\qquad\quad \alpha\in C^1([r,r'],\mg),\:\: \phi\in C^0([r,r'],\mg),\:\: Y\in \mg.
\end{align*}
Due to \eqref{vcvcvcvcvc} (for $\psi\equiv \phi_Y|_{[r,r']}$ there), for $\phi\in \DIDE_{[r,r']}$ this is solved by $\alpha=\Add_\phi(Y)$.  Lemma \ref{fdfddfdf} then already implies that this solution is unique. 
\vspace{6pt}

\noindent 
	In fact, let $\alpha\in C^1([r,r'],\mg)$ be given, with $\dot\alpha=\bl\phi,\alpha\br$ (hence, $\dot\alpha-\bl\phi,\alpha\br=0$) and $\alpha(r)=Y$. We define $\beta:=\Add_{\phi^{-}}\cp\alpha$, and obtain from Lemma \ref{fdfddfdf} that $0=\beta-\beta(r)=\beta -Y$ holds. This yields
\begin{align*}
	\Add^t_\phi(Y)=\Add_\phi^t(\beta(t))=\Add_{\phi}^t(\Add_{\phi^-}^t(\alpha(t)))=\alpha(t)\qquad\quad\forall\: t\in [r,r'],
\end{align*}
which proves uniqueness.
\hspace*{\fill}$\ddagger$
\end{remark}
\subsection{Uniform Approximation}
We now will use Lemma \ref{fdfddfdf} to prove certain approximation properties of maps of the form \eqref{ndsnsnmsdnmdsnm}. We start with the following observation. 
\begin{lemma}
\label{podfopdfdofpdfpofdpo}
Assume we are given $\phi\in \DIDE_{[r,r']}$ and $\{\alpha_n\}_{n\in \NN}\subseteq \CP_0^1([r,r'],\mg)$ with $\{\ul{\alpha_n}(r)\}_{n\in \NN}\rightarrow Y\in \mg$ and $\{\alpha_n\}_{n\in \NN}\slim{\phi} 0$. 
Then,  $\{\ul{\alpha_n}\}_{n\in \NN}\rightarrow \Add_{\phi}(Y)$ converges uniformly.
\end{lemma}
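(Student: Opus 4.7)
The plan is to apply Lemma \ref{fdfddfdf} to each $\alpha_n$ and then transfer uniform convergence through $\Add_\phi$, exploiting that the image $\{\innt_r^t\phi\:|\:t\in[r,r']\}\subseteq G$ is compact. More precisely, set $\beta_n:=\Add_{\phi^{-1}}\cp\ul{\alpha_n}\in C^0([r,r'],\mg)$. Since $\Add^r_{\phi^{-1}}=\Ad_{[\innt_r^r\phi]^{-1}}=\Ad_e=\id_\mg$, we have $\beta_n(r)=\ul{\alpha_n}(r)$, and Lemma \ref{fdfddfdf} yields
\begin{align*}
\beta_n=\ul{\alpha_n}(r)+\AI(\phi,\alpha_n).
\end{align*}
By assumption, $\ul{\alpha_n}(r)\rightarrow Y$, so it suffices to show that $\AI(\phi,\alpha_n)\rightarrow 0$ uniformly, and that subsequently applying $\Add_\phi$ preserves uniform convergence.

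For the first step, the compact subset $\compact:=\{[\innt_r^t\phi]^{-1}\:|\:t\in [r,r']\}\subseteq G$ is the image of $[r,r']$ under the continuous map $t\mapsto[\innt_r^t\phi]^{-1}$. Fact \ref{as1} then gives, for each $\vv\in \SEM$, some $\vv\leq \ww\in \SEM$ with $\vvv\cp\Ad_g\leq \www$ for all $g\in \compact$. Applying this inside the defining integrals \eqref{ykjdsjkjds} of $\AI(\phi,\alpha_n)$, and bounding the integrand by the seminorm quantity $\www_\infty(\phi,\alpha_n)$ used in the definition of $\{\alpha_n\}\slim{\phi}0$, we obtain
\begin{align*}
\vvv(\AI(\phi,\alpha_n)(t))\leq |r'-r|\cdot \www_\infty(\phi,\alpha_n)\qquad\quad\forall\:t\in[r,r'],
\end{align*}
where the right hand side tends to $0$ as $n\rightarrow\infty$ by hypothesis. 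Combined with $\ul{\alpha_n}(r)\rightarrow Y$, this yields $\beta_n\rightarrow Y$ uniformly.

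For the second step, observe that $\ul{\alpha_n}=\Add_\phi(\beta_n)$ pointwise, and that $\Add_\phi(Y)(t)=\Add^t_\phi(Y)$. Since $\Ad_g$ is linear in its Lie algebra argument, we have $\ul{\alpha_n}(t)-\Add^t_\phi(Y)=\Add^t_\phi(\beta_n(t)-Y)$. Applying fact \ref{as1} once more to the compact set $\{\innt_r^t\phi\:|\:t\in[r,r']\}\subseteq G$, we find, for each $\vv\in \SEM$, some $\vv\leq\ww\in \SEM$ with $\vvv(\Add^t_\phi(Z))\leq \www(Z)$ for all $t\in[r,r']$ and $Z\in \mg$. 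Hence
\begin{align*}
\vvv(\ul{\alpha_n}(t)-\Add^t_\phi(Y))\leq \www(\beta_n(t)-Y)\qquad\quad\forall\:t\in[r,r'],
\end{align*}
and uniform convergence $\beta_n\rightarrow Y$ transfers to uniform convergence $\ul{\alpha_n}\rightarrow \Add_\phi(Y)$. I do not expect any genuine obstacle in this argument; the only point requiring attention is to make sure the compactness-based bound from fact \ref{as1} is applied uniformly in $t$, which is precisely what allows both estimates above to be independent of $t\in[r,r']$.
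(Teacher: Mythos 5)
Your proposal is correct and follows essentially the same route as the paper's proof: define $\beta_n:=\Add_{\phi^{-1}}\cp\ul{\alpha_n}$, use Lemma \ref{fdfddfdf} together with the compactness-based bound \ref{as1} to get $\qqq_\infty(\AI(\phi,\alpha_n))\leq |r'-r|\cdot\mmm_\infty(\phi,\alpha_n)\rightarrow 0$, and then transfer the resulting uniform convergence $\beta_n\rightarrow Y$ back through $\Add_\phi$ via a second application of \ref{as1}. The only difference is cosmetic (you split the argument into two uniform-convergence steps rather than chaining the three seminorms $\pp\leq\qq\leq\mm$ into a single $\varepsilon$-estimate).
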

\begin{proof}
	For $\pp\in \SEM$ fixed, we choose seminorms $\pp\leq \qq\leq \mm$ with, cf.\ \ref{as1} 
	\begin{align}
	\label{opspodspodspodsposdpd}
		\ppp\cp \Add_{\phi}\leq \qqq\qquad\quad\:\:\text{and}\qquad\quad\:\:\qqq\cp \Add_{\phi^{-1}}\leq \mmm.
	\end{align}		
	Moreover, we let $\beta_n:=\Add_{\phi^{-1}}\cp\ul{\alpha_n}$ for each $n\in \NN$, fix $\varepsilon>0$, and choose 
	 $n_\varepsilon\in \NN$ with\footnote{Observe that $\beta_n(r)=\ul{\alpha_n}(r)$ holds for each $n\in \NN$.} 
	\begin{align}
		 	\label{asjasllklksaklasasasa}
	|r'-r|\cdot \mmm_\infty(\phi,\alpha_n)+ 	\qqq(\beta_n(r)-Y) < \varepsilon\qquad\quad\forall\: n\geq n_\varepsilon.
	\end{align}		 
	 Then, by Lemma \ref{fdfddfdf}, we have
	 \begin{align}
	 \label{asjasllklksaklas}
	 \begin{split}
	 	\textstyle\qqq_\infty(\beta_n-\beta_n(r))&\textstyle= \qqq_\infty(\AI(\phi,\alpha_n))\stackrel{\eqref{isdsdoisdiosd1}, \eqref{opspodspodspodsposdpd}}{\leq} |r'-r|\cdot  \mmm_\infty(\phi, \alpha_n).
	 	\end{split}
	 \end{align}
	 We thus obtain for $n\geq n_\varepsilon$ that
	 \vspace{-5pt} 
	\begin{align*}
	\ppp_\infty(\ul{\alpha_n}-\Add_{\phi}(Y))=\ppp_\infty(\Add_{\phi}(\beta_n-Y))&\stackrel{\eqref{opspodspodspodsposdpd}}{\leq} \qqq_\infty(\beta_n-Y)\\[8pt]
	&\hspace{3pt}	\leq \hspace{3pt}\qqq_\infty(\beta_n-\beta_n(r)) +\qqq(\beta_n(r)-Y)\\
	&\stackrel{\eqref{asjasllklksaklas}}{\leq} |r'-r|\cdot \mmm_\infty(\phi,\alpha_n)+\qqq(\beta_n(r)-Y) \\
	& \stackrel{\eqref{asjasllklksaklasasasa}}{<} \varepsilon
\end{align*}
	 \vspace{-5pt} 
holds, which proves the claim.
\end{proof}
\begin{corollary}
\label{fiofiofdoifdio}
Assume we are given $X,Y\in \mg$ with 
$\phi_{X}|_{[0,1]}\in \DIDE_{[0,1]}$, such that to each $\vv\in \SEM$, there exists some $C\geq 0$ with $\vvv(\com{X}^n(Y))\leq C^n$ for each $n\geq 1$. 
Then, we have
\begin{align*}
	\textstyle [r,r']\ni t\mapsto \Ad_{\innt_r^t\phi_X}(Y)
	=\sum_{k=0}^\infty \frac{(t-r)^k}{k!}\cdot \com{X}^k(Y)
\end{align*}
for each $r<r'$, whereby the right hand side converges uniformly. 
\end{corollary}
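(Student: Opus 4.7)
The plan is to apply Lemma \ref{podfopdfdofpdfpofdpo} to the natural candidate for an approximating sequence, namely the partial sums
$$\alpha_n\colon [r,r']\ni t\mapsto \sum_{k=0}^n \frac{(t-r)^k}{k!}\,\com{X}^k(Y)\in \mg\qquad\quad\forall\: n\in \NN$$
of the right-hand series. Each $\alpha_n$ is of class $C^1$ (in fact polynomial), and I would regard it as an element of $\CP_0^1([r,r'],\mg)$ with trivial one-piece decomposition. Moreover, by the remark preceding the definition of an exponential map, $\phi_X|_{[r,r']}\in \DIDE_{[r,r']}$ for every $r<r'$, so Lemma \ref{podfopdfdofpdfpofdpo} will be applicable with $\phi:=\phi_X|_{[r,r']}$.

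First I would discharge the initial-value hypothesis: since $\com{X}^0=\id_\mg$, the sequence $\{\ul{\alpha_n}(r)\}_{n\in\NN}$ is constantly equal to $Y$ and so trivially converges to $Y$. The core calculation is the defect estimate. Differentiating $\alpha_n$ term by term, re-indexing $k\mapsto k+1$, and factoring one copy of $\com{X}$ out of the sum yields $\dot\alpha_n(t)=\com{X}(\alpha_{n-1}(t))=\bl X,\alpha_{n-1}(t)\br$, from which I read off the telescoping identity
$$\dot\alpha_n(t)-\bl\phi_X(t),\alpha_n(t)\br = -\frac{(t-r)^n}{n!}\,\com{X}^{n+1}(Y).$$
For $\mm\in \SEM$, the growth hypothesis supplies $C\geq 0$ with $\mmm(\com{X}^k(Y))\leq C^k$ for all $k\geq 1$, hence
$$\mmm_\infty(\phi_X,\alpha_n)\leq \frac{(r'-r)^n}{n!}\,\mmm(\com{X}^{n+1}(Y))\leq C\cdot \frac{(C(r'-r))^n}{n!}\stackrel{n\to\infty}{\longrightarrow} 0,$$
so $\{\alpha_n\}_{n\in\NN}\slim{\phi_X} 0$.

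Lemma \ref{podfopdfdofpdfpofdpo} then delivers uniform convergence $\{\alpha_n\}_{n\in\NN}\rightarrow \Add_{\phi_X}(Y)$ on $[r,r']$; since $\alpha_n$ is by construction the $n$-th partial sum of the series appearing in the claim, this is precisely the asserted identity together with the asserted uniform convergence. I expect no substantial obstacle beyond recognising the right approximating sequence: the telescoping above is just the Picard iteration for the adjoint equation $\dot\alpha=\bl X,\alpha\br$ with $\alpha(r)=Y$, whose unique solution is $\Add_{\phi_X}(Y)$ by the remark on the adjoint equation, and the scalar growth bound $\mmm(\com{X}^k(Y))\leq C^k$ is exactly what is needed to drive the defect to zero in every seminorm.
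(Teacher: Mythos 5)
Your proposal is correct and coincides with the paper's own proof: the same partial sums $\alpha_n$, the same defect computation $\dot\alpha_n-\bl\phi_X,\alpha_n\br=-\frac{(\cdot-r)^n}{n!}\com{X}^{n+1}(Y)$ with the bound $C\cdot\frac{(|r'-r|\cdot C)^n}{n!}\rightarrow 0$, and the same appeal to Lemma \ref{podfopdfdofpdfpofdpo}. No gaps.
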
 
\begin{proof}
	For $C^1([r,r'],\mg)\ni\alpha_n\colon [r,r']\ni t\mapsto \sum_{k=0}^n \frac{(t-r)^k}{k!}\cdot \com{X}^k(Y)$, we have $\ul{\alpha_n}(r)=Y$ as well as
	\begin{align*}
		\textstyle\vvv(\dot\alpha_n-\bl\phi_X|_{[r,r']},\alpha_n\br)= \vvv\big(-\frac{(\cdot-r)^n}{n!}\cdot \com{X}^{n+1}(Y)\big)\leq C\cdot \frac{(|r'-r|\cdot C)^n}{n!}\qquad\quad\forall\: \vv\in \SEM,\:\: n\in \NN.
	\end{align*}
	The claim thus follows from Lemma \ref{podfopdfdofpdfpofdpo}.
\end{proof}
\begin{remark}[Duhamel's formula]
\label{pofpopofdoppofd}
Assume that $G$ admits an exponential map; and that to each $X,Y\in \mg$ and $\vv\in \SEM$, there exists some $C\geq 0$ with
\begin{align}
\label{lklkasaaxyxy}
	\vvv(\com{X}^n(Y))\leq C^n \qquad\quad\forall\: n\geq 1.
\end{align}
Then, Corollary \ref{fiofiofdoifdio} shows that 
$G$ is quasi constricted in the sense of Sect.\ 8.3 in \cite{RGM}; so that Proposition 8 in \cite{RGM} also holds if there ``quasi constricted'' is replaced by Condition \eqref{lklkasaaxyxy}.\hspace*{\fill}$\ddagger$
\end{remark}
We furthermore obtain from Lemma \ref{podfopdfdofpdfpofdpo} that
\begin{corollary}
\label{fdpopodpofd}
Let $Z_1,\dots,Z_d\in \mg$, $m_1,\dots,m_d\in \NN$, and $\phi_\elll\in \DIDE_{[r_\elll,r'_\elll]}$ for $\elll=1,\dots, d$; and define
\begin{align*}
	t_{\elll,n,p}:=r_\elll +  p/n\cdot |r'_\elll-r_\elll|\qquad\quad\forall\: \elll=1,\dots, d,\:\: n\geq 1,\:\:p=0,\dots,n. 
\end{align*} 
Moreover, assume that for $\elll=1,\dots,d$ and $n\geq 1$,     
we are given maps 
\begin{align*}
	\alpha_{\phi_\elll,n}[p]\colon [t_{\elll,n,p},t_{\elll,n,p+1}]\times \mg\rightarrow \mg\qquad\quad\text{for}\qquad\quad p=0,\dots,n-1,
\end{align*} 
such that the following two conditions are fulfilled:
\begingroup
\setlength{\leftmargini}{16pt}
{
\renewcommand{\theenumi}{{\alph{enumi}})} 
\renewcommand{\labelenumi}{\theenumi}
\begin{enumerate}
\item
\label{hjfd0}
	We have $\alpha_{\phi_\elll,n,Y}\equiv\{\alpha_{\phi_\elll,n}[p](\cdot,Y)\}_{0\leq p\leq n-1}\in \CP_0^1([r_\elll,r_\elll'],\mg)$  with $\ul{\alpha_{\phi_\elll,n,Y}}(r_u)=Y$ for each $Y\in \mg$.  In this case, we let 
	\begin{align}
	\label{fddfdfdffddffd}
		\ul{\alpha_{\phi_\elll,n}}(t,Y):=\ul{\alpha_{\phi_\elll,n,Y}}(t)\qquad\quad\forall\: Y\in \mg,\:\: t\in [r_\elll,r'_\elll].
	\end{align}	
\item
\label{hjfd2}
We have $\{\alpha_{\phi_\elll,n,Y_n}\}_{n\in \NN}\slim{\phi_\elll} 0$  for each converging sequence $\mg\supseteq \{Y_n\}_{n\in \NN}\rightarrow Y\in \mg$.  
\end{enumerate}}
\endgroup
\noindent
Then, for each $Y\in \mg$, and $t_\elll\in [r_\elll,r'_\elll]$ for $\elll=1,\dots, d$, we have
\begin{align*}
	\textstyle\big(\com{Z_1}^{m_1}\cp\Add^{t_1}_{\phi_1}\cp  {\dots}\:\cp&\textstyle\:  \com{Z_d}^{m_d} \cp\Add^{t_d}_{\phi_d}\big)(Y)\\[1pt]
	&\textstyle=\lim_n \big(\com{Z_1}^{m_1}\cp\ul{\alpha_{\phi_1,n}}(t_1,\cdot)\cp {\dots}\cp \com{Z_d}^{m_d} \cp \ul{\alpha_{\phi_d,n}}(t_d,\cdot)\big)(Y).
\end{align*}
\end{corollary}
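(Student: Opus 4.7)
The plan is to proceed by induction on $d$, using Lemma \ref{podfopdfdofpdfpofdpo} as the sole analytic tool. At each step I rely on the fact that, by \eqref{podspodspods}, the Lie bracket is continuous, so each iterate $\com{Z}^m\colon \mg\to \mg$ is continuous and can be pulled through a convergent sequence. This will allow us to propagate convergence through the factors $\com{Z_1}^{m_1},\dots,\com{Z_d}^{m_d}$.

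For the base case $d=1$, I would apply Lemma \ref{podfopdfdofpdfpofdpo} directly to the sequence $\{\alpha_{\phi_1,n,Y}\}_{n\in \NN}$: condition \ref{hjfd0} gives $\ul{\alpha_{\phi_1,n,Y}}(r_1)=Y$, so the starting-point hypothesis is satisfied trivially by the constant sequence $\{Y\}_{n\in \NN}$; and condition \ref{hjfd2}, applied to the same constant sequence, yields $\{\alpha_{\phi_1,n,Y}\}_{n\in \NN}\slim{\phi_1}0$. The lemma then gives uniform convergence $\ul{\alpha_{\phi_1,n}}(\cdot,Y)\to \Add_{\phi_1}(Y)$ on $[r_1,r_1']$, hence in particular $\ul{\alpha_{\phi_1,n}}(t_1,Y)\to \Add^{t_1}_{\phi_1}(Y)$, and continuity of $\com{Z_1}^{m_1}$ closes the case.

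For the inductive step, assume the statement for $d-1$. Set
\begin{align*}
W_n&:=\big(\com{Z_2}^{m_2}\cp \ul{\alpha_{\phi_2,n}}(t_2,\cdot)\cp{\dots}\cp \com{Z_d}^{m_d}\cp \ul{\alpha_{\phi_d,n}}(t_d,\cdot)\big)(Y),\\
W&:=\big(\com{Z_2}^{m_2}\cp \Add^{t_2}_{\phi_2}\cp{\dots}\cp \com{Z_d}^{m_d}\cp \Add^{t_d}_{\phi_d}\big)(Y),
\end{align*}
so that by the induction hypothesis $\{W_n\}_{n\in \NN}\to W$ in $\mg$. I would then apply Lemma \ref{podfopdfdofpdfpofdpo} to the sequence $\{\alpha_{\phi_1,n,W_n}\}_{n\in \NN}\subseteq \CP_0^1([r_1,r_1'],\mg)$: by \ref{hjfd0} we have $\ul{\alpha_{\phi_1,n,W_n}}(r_1)=W_n\to W$, and by \ref{hjfd2} applied to the converging sequence $\{W_n\}_{n\in \NN}\to W$ we obtain $\{\alpha_{\phi_1,n,W_n}\}_{n\in \NN}\slim{\phi_1} 0$. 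The lemma yields uniform convergence $\ul{\alpha_{\phi_1,n,W_n}}\to \Add_{\phi_1}(W)$ on $[r_1,r_1']$; evaluating at $t_1$ and recalling \eqref{fddfdfdffddffd} gives $\ul{\alpha_{\phi_1,n}}(t_1,W_n)\to \Add^{t_1}_{\phi_1}(W)$. Applying the continuous map $\com{Z_1}^{m_1}$ completes the induction.

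The main subtlety is the inductive step, where Lemma \ref{podfopdfdofpdfpofdpo} must be invoked not with a fixed initial value but with a \emph{varying} one (the inductively produced sequence $W_n\to W$). This is exactly why condition \ref{hjfd2} is formulated in terms of arbitrary converging sequences $\{Y_n\}_{n\in \NN}\to Y$ rather than being specialized to constant ones; without that flexibility, the induction would not close, since in general the output $W_n$ of the $(d-1)$-fold composition depends on $n$.
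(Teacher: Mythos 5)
Your proof is correct and is exactly the argument the paper intends: the paper's proof consists of the single sentence that the claim follows by induction from Lemma \ref{podfopdfdofpdfpofdpo}, and your write-up supplies precisely that induction, including the key point that condition \emph{\ref{hjfd2}} must be invoked with the varying initial values $W_n\rightarrow W$ produced by the inductive hypothesis. Nothing further is needed.
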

\begin{proof}
The claim just follows by induction on $p$ from Lemma \ref{podfopdfdofpdfpofdpo}.  
\end{proof}
\subsection{Asymptotic Estimates}
\label{pogfpogfa}
We now use Corollary \ref{fdpopodpofd} to estimate terms of the form \eqref{ndsnsnmsdnmdsnm} for the case that $\mg$ is  asymptotic estimate. Lemma \ref{podspods} is proven under milder presumptions, and will be applied in Sect.\ \ref{podspodspodspo} to the constricted case. 
In the following, we let $\exxp\colon \RR\rightarrow \RR_{>0}$ denote the exponential function, set $\euler:= \exp(1)$; and define 
\begin{align*}
	\textstyle\Lamb[X]_n\colon \RR\times \mg\rightarrow \mg,\qquad(t,Y)\mapsto \sum_{k=0}^n \frac{t^k}{k!}\cdot \com{X}^k(Y)
\end{align*}
for each $n\in \NN$, and $X\in \mg$. We fix $r<r'$, and define
\begin{align}
\label{fdfddfdddod}
	t_{n,p}:= r+ p/n\cdot |r'-r|\qquad\quad\forall\: n\geq 1,\:\: p=0,\dots,n. 
\end{align}
Let $\phi\in  \DIDE_{[r,r']}$ and $n\geq 1$ be fixed.
\begingroup
\setlength{\leftmargini}{13pt}
\begin{itemize}
\item
We define $\alpha_{\phi,n}[p]\colon [t_{n,p},t_{n,p+1}]\times\mg\rightarrow \mg$ inductively by 
\begin{align*}
\textstyle	\alpha_{\phi,n}[p]&:=\Lamb[\phi(t_{n,p})]_n(\cdot-t_{n,p}, \alpha_{\phi,n}[p-1](t_{n,p},\cdot))\qquad\quad\forall\: 0\leq p\leq n-1,
\end{align*}
with $\alpha_{\phi,n}[-1](t_{n,0},\cdot)\equiv\id_\mg$.
\item
Then, for each $Y\in \mg$ and $0\leq p\leq n-1$, we have (compare to \eqref{lkfdlkfdlkfd})
\begin{align*}
	\alpha_{\phi,n}[p](t_{p+1},Y)=\alpha_{\phi,n}[p+1](t_{p+1},Y)\qquad\quad\forall\: p=0,\dots,n-2.	
\end{align*}
Moreover, for $\tau\in [t_p,t_{p+1}]$ with $0\leq p\leq n-1$, we have 
\begin{align*}
	\textstyle\partial_t\he\alpha_{\phi,n}[p](\tau,Y)&-\bl \phi(\tau),\alpha_{\phi,n}[p](\tau,Y)\br\\[1pt]
	&=\textstyle\sum_{k=0}^{n-1}\textstyle\frac{(\tau-t_{n,p})^k}{k!}\cdot \com{\phi(t_{n,p})-\phi(\tau)}\cp\com{\phi(t_{n,p})}^{k}(\alpha_{\phi,n}[p-1](t_{n,p},Y))\\[1pt]
	&\quad\hspace{1pt}-\textstyle \frac{(\tau-t_{n,p})^{n}}{n!}\cdot \com{\phi(\tau)}\cp\com{\phi(t_{n,p})}^{n}(\alpha_{\phi,n}[p-1](t_{n,p},Y)).
\end{align*} 
\end{itemize}
\endgroup
\noindent
Evidently, then Condition \emph{\ref{hjfd0}} in Corollary \ref{fdpopodpofd} is fulfilled for $\phi_\elll\equiv \phi$  ($d\equiv 1$) there; and we define $\ul{\alpha_{\phi,n}}$ as in \eqref{fddfdfdffddffd}. 
Then, for $\pp\in \SEM$ given, we 
choose $\pp\leq \vv\in \SEM$ with 
\begin{align*}
	\ppp(\bl X_1,X_2\br)\leq \vvv(X_1)\cdot \vvv(X_2)\qquad\quad\forall\: X_1,X_2\in \mg;
\end{align*}
and obtain for $\tau\in [t_p,t_{p+1}]$ that 
\begin{align*}
	\ppp\big(\partial_t\he \alpha_{\phi,n}[p](\tau,Y)&-\bl \phi(\tau),\alpha_{\phi,n}[p](\tau,Y)\br\big)\\[2pt]
	&\leq\textstyle\vvv(\phi(t_{n,p})-\phi(\tau))\cdot\underbracket{\textstyle\sum_{k=0}^{n-1} \frac{|\tau-t_{n,p}|^k}{k!}\cdot \vvv(\com{\phi(t_{n,p})}^{k}(\alpha_{\phi,n}[p-1](t_{n,p},Y)))}_{{\bf A}}\\
	&\quad\he \textstyle + \vvv_\infty(\phi)\cdot\underbracket{\textstyle\frac{|\tau-t_{n,p}|^{n}}{n!}\cdot \vvv(\com{\phi(t_{n,p})}^{n}(\alpha_{\phi,n}[p-1](t_{n,p},Y)))}_{{\bf B}}
\end{align*}
holds. 
Assume now that for each $\vv\in \SEM$, there exist $\vv\leq \ww\in\SEM$ and $C_\vv\geq 0$ with
\begin{align}
\label{assssasaasxxxas}
	\vvv\cp \com{X_1}\cp {\dots}\cp \com{X_n}\leq  C_\vv^n\cdot \www\qquad\quad\forall\: X_1,\dots,X_n\in \im[\phi],\:\: n\geq 1.
\end{align}
Then, it is immediate from the definitions that 
\begin{align*}
	{\bf A}\textstyle\leq \exxp(|r'-r|\cdot C_\vv)\cdot \www(Y)\qquad\quad\:\:\text{and}\qquad\quad\:\:
	{\bf B}\leq\textstyle\frac{(|r'-r|\cdot C_\vv)^{n}}{n!}\cdot\exxp(|r'-r|\cdot C_\vv)\cdot\www(Y)
\end{align*}
holds; i.e., that additionally the condition \emph{\ref{hjfd2}} in Corollary \ref{fdpopodpofd} is fulfilled   
for $\phi_\elll\equiv \phi$ ($d\equiv 1$) there. 

We obtain the following two statements.
\begin{lemma}
\label{podspods}
Assume we are given $\mathrm{M}\subseteq \mg$, $\vv\leq \ww\in \SEM$, and $C_\vv\geq 0$, such that 
\begin{align}
\label{assssasaasasyyy}
	\vvv\cp \com{X_1}\cp {\dots}\cp \com{X_n}\leq  C_\vv^n\cdot \www\qquad\quad\forall\: X_1,\dots,X_n\in \mathrm{M},\:\: n\geq 1
\end{align}
holds. 
Then, for each $\phi\in \DP^0([r,r'],\mg)$ with $\im[\phi]\subseteq \mathrm{M}$, we have
\begin{align*}
	\textstyle\vvv\cp\Add_{\phi^{-1}}\leq \exxp(|r'-r|\cdot C_\vv)\cdot \www.
\end{align*}
\end{lemma}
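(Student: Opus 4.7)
The plan is to first establish the companion estimate $\vvv\cp\Add^t_\phi\leq \exxp((t-r)\cdot C_\vv)\cdot\www$ by exploiting the approximation scheme constructed immediately above, and then transfer it to $\Add_{\phi^{-1}}$ via the reversal identity from Example \ref{fdpofdopdpof}. For the base case $\phi\in \DIDE_{[r,r']}$, unfolding the recursion that defines $\alpha_{\phi,n}[p]$ presents $\ul{\alpha_{\phi,n}}(t,Y)$ as a positive linear combination of bracket monomials $\com{\phi(t_{n,p})}^{k_p}\cp{\dots}\cp\com{\phi(t_{n,0})}^{k_0}(Y)$ carrying the coefficients $\frac{(t-t_{n,p})^{k_p}}{k_p!}\prod_{j=0}^{p-1}\frac{(t_{n,j+1}-t_{n,j})^{k_j}}{k_j!}$ contributed by the polynomials $\Lamb[\cdot]_n$. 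Since each letter $\phi(t_{n,j})$ lies in $\im[\phi]\subseteq \mathrm{M}$, hypothesis \eqref{assssasaasasyyy} bounds every monomial in $\vvv$-norm by $C_\vv^{k_0+{\dots}+k_p}\cdot\www(Y)$, and summation of the resulting multi-geometric series telescopes into $\exxp((t-r)\cdot C_\vv)\cdot\www(Y)$; the uniform convergence supplied by Lemma \ref{podfopdfdofpdfpofdpo}, together with continuity of $\vvv$, then passes the bound to $\Add^t_\phi(Y)$.

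The main obstacle will be extending this companion bound from $\DIDE_{[r,r']}$ to the full class $\DP^0([r,r'],\mg)$: a naive iteration of the single-piece estimate across the pieces $\phi[0],\dots,\phi[m-1]$ of a decomposition would force a chain of ever-larger seminorms $\vvv\leq\www\leq\www'\leq{\dots}$, producing an exponential factor in a seminorm strictly larger than $\www$. To sidestep this, I would invoke Corollary \ref{fdpopodpofd} with $d=p+1$ and $m_1=\dots=m_d=0$ to compose the piecewise approximations into $\ul{\alpha_{\phi[p],n}}(t,\cdot)\cp{\dots}\cp\ul{\alpha_{\phi[0],n}}(t_1,\cdot)$, and then expand this whole composition into a \emph{single} polynomial in brackets whose letters still all lie in $\im[\phi]\subseteq \mathrm{M}$. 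One application of \eqref{assssasaasasyyy} with the combined bracket depth as exponent produces coefficients that factor into a product over the pieces and telescope to $\prod_{j=0}^{p-1}\exxp((t_{j+1}-t_j)\cdot C_\vv)\cdot\exxp((t-t_p)\cdot C_\vv)\cdot\www(Y)=\exxp((t-r)\cdot C_\vv)\cdot\www(Y)$, as before.

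Once the companion estimate is available for every $\phi\in \DP^0([r,r'],\mg)$ with $\im[\phi]\subseteq \mathrm{M}$, I pass to $\Add_{\phi^{-1}}$ by applying Example \ref{fdpofdopdpof} to the restriction $\phi|_{[r,t]}$: this yields $[\innt_r^t\phi]^{-1}=\innt_r^t\inverse{\phi|_{[r,t]}}$ with $\inverse{\phi|_{[r,t]}}(s)=-\phi(r+t-s)$ on $[r,t]$. Because $\com{-X}=-\com{X}$ while $\vvv$ is symmetric as a seminorm, hypothesis \eqref{assssasaasasyyy} holds verbatim with $\mathrm{M}$ replaced by $-\mathrm{M}$ and the same $C_\vv$; as $\im[\inverse{\phi|_{[r,t]}}]\subseteq -\mathrm{M}$, the companion estimate applied to this reversed-negated curve on the subinterval $[r,t]$ yields
\begin{align*}
\vvv(\Add^t_{\phi^{-1}}(Y))=\vvv(\Add^t_{\inverse{\phi|_{[r,t]}}}(Y))\leq \exxp((t-r)\cdot C_\vv)\cdot\www(Y)\leq \exxp(|r'-r|\cdot C_\vv)\cdot\www(Y),
\end{align*}
which is the desired inequality.
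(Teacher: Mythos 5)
Your proposal is correct and follows essentially the same route as the paper: both reduce $\Add_{\phi^{-1}}$ to forward evolutions of the time-reversed, negated pieces via Example \ref{fdpofdopdpof} (after observing that \eqref{assssasaasasyyy} survives the sign change), compose the $\Lamb$-built approximants across the pieces through Corollary \ref{fdpopodpofd}, and apply \eqref{assssasaasasyyy} a single time to the fully expanded bracket polynomial so that the nonnegative coefficients sum to the exponential factor -- exactly the device you identify for avoiding a chain of ever-larger seminorms. The only cosmetic difference is that you first establish the forward estimate for arbitrary $\DP^0$-curves with image in $\mathrm{M}$ and then reverse the whole curve at once, whereas the paper reverses piece by piece and estimates the composition for the inverse directly.
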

\begin{proof}
Replacing $\mathrm{M}$ by $-\mathrm{M}\cup \mathrm{M}$ if necessary, we can assume that $-\mathrm{M}=\mathrm{M}$ holds.  
We let $\phi\in \DP^0([r,r'],\mg)$ with $\im[\phi]\subseteq \mathrm{M}$ be fixed, choose $\phi[0],\dots,\phi[m-1]$ as in \eqref{opopooppo}; and define $\phi_\elll\equiv\phi[\elll-1]$ for $1\leq \elll \leq d\equiv m$. Then, for $1\leq \elll\leq d$ and $t\in (t_{\elll-1},t_{\elll}]$ fixed, we define, cf.\ Example \ref{fdpofdopdpof}
\begin{align*}
	\psi_{\elll}:= \inver(\phi_\elll|_{[t_{\elll-1},t]}) \qquad\quad\:\:\text{as well as}\qquad\quad\:\:
	\psi_p:= \inverse{\phi_p}\qquad\forall\: p=1,\dots,\elll-1;
\end{align*}
and observe that $\im[\psi_1]\cup{\dots}\cup \im[\psi_u]\subseteq -\mathrm{M}=\mathrm{M}$ holds by \eqref{podspodspoaaa}, as well as
\begin{align*}
		\Add_{\phi^-}^t =     \Add^{t_1}_{\phi^-_1}\cp{\dots}\cp \Add^{t_{\elll-1}}_{\phi^-_{\elll-1}} \cp \Add^{t}_{\phi^-_{\elll}}=\Add_{\psi_1}\cp{\dots}\cp \Add_{\psi_{\elll-1}} \cp \Add_{\psi_{\elll}}
	\end{align*} 
	by \eqref{pfifpfpofdpofd}. 
	For $p=1,\dots,u$, we construct $\ul{\alpha_{\psi_p,n}}$ for $n\geq 1$ as above (for $\phi\equiv \psi_p$ there); and obtain from Corollary \ref{fdpopodpofd} (and \eqref{assssasaasxxxas}, \eqref{assssasaasasyyy}) that 
	\begin{align*}
		\textstyle\vvv(\Add^t_{\phi^{-1}}(Y))&\textstyle\hspace{3.4pt}=\hspace{3.2pt}\lim_n\vvv\big(\big(\ul{\alpha_{\psi_1,n}}(t_1,\cdot)\cp{\dots}\cp\ul{\alpha_{\psi_{\elll-1},n}}(t_{\elll-1},\cdot)\cp \ul{\alpha_{\psi_\elll,n}}(t,\cdot)\big)(Y)\big)\\
		&\stackrel{\eqref{assssasaasasyyy}}{\leq}\exxp(|r'-r|\cdot C_\vv)\cdot \ww(Y)
	\end{align*}
	holds for each $Y\in \mg$, which proves the claim.  
\end{proof}
\begin{lemma}
\label{fddfddffdfd}
Assume that $G$ is \emph{asymptotic estimate}; and let $\vv\leq\ww\in \SEM$ be as in \eqref{fdjfdlkfdfdlkmcx}. Then, for $\phi_1,\dots,\phi_d\in \DIDE_{[r,r']}$, $Z_1,\dots,Z_d,Y\in \mg$, $m_1,\dots,m_d\in \NN$, and $t\in [r,r']$, we have
\begin{align*}
	\textstyle\big(\vvv\cp \com{Z_1}^{m_1}\cp\Add^t_{\phi_1}\cp  {\dots}&\cp\textstyle\:\com{Z_d}^{m_d}\cp\Add^t_{\phi_d}\big)(Y)\\[1pt]
	&\textstyle\leq \exxp\big(\sum_{p=1}^d\int_r^t \www(\phi_p(s))\:\dd s\big) \cdot \www(Z_1)^{m_1}\cdot{\dots}\cdot\www(Z_d)^{m_d}\cdot \www(Y).
\end{align*}
\end{lemma}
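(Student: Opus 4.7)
The plan is to apply Corollary \ref{fdpopodpofd} to the approximating curves $\alpha_{\phi_p, n}$ constructed in the paragraph directly preceding Lemma \ref{podspods}, and then to estimate the $n$-th approximation explicitly using the asymptotic estimate axiom \eqref{fdjfdlkfdfdlkmcx}. The hypotheses (a), (b) of the corollary are in fact already verified in that paragraph under the weaker assumption \eqref{assssasaasxxxas}; and \eqref{fdjfdlkfdfdlkmcx} clearly entails \eqref{assssasaasxxxas} with $\mathrm{M} := \im[\phi_p]$ and $C_\vv := \www_\infty(\phi_p) < \infty$ (since $\phi_p$ is continuous on $[r, r']$). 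Corollary \ref{fdpopodpofd} therefore yields
\begin{equation*}
\big(\com{Z_1}^{m_1} \cp \Add^t_{\phi_1} \cp \dots \cp \com{Z_d}^{m_d} \cp \Add^t_{\phi_d}\big)(Y) = \lim_{n \to \infty} F_n(Y),
\end{equation*}
where $F_n := \com{Z_1}^{m_1} \cp \ul{\alpha_{\phi_1, n}}(t, \cdot) \cp \dots \cp \com{Z_d}^{m_d} \cp \ul{\alpha_{\phi_d, n}}(t, \cdot)$, and it remains to bound $\vvv(F_n(Y))$ by the asserted right-hand side uniformly in $n$.

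Fix $n \geq 1$, set $h_n := (r' - r)/n$, and let $q_p(t) \in \{0, \dots, n-1\}$ be the index with $t \in [t_{n, q_p(t)}, t_{n, q_p(t) + 1}]$. Unwinding the inductive definition of $\alpha_{\phi_p, n}[q_p(t)]$ as a composition of the polynomial maps $\Lamb[\phi_p(t_{n, q})]_n$ and expanding each polynomial into its monomials, $F_n(Y)$ becomes a finite multi-sum indexed by $(k_{p, q})$ (with $0 \leq k_{p, q} \leq n$, $q = 0, \dots, q_p(t)$, $p = 1, \dots, d$) of terms
\begin{equation*}
\Big(\prod_{p, q} \tfrac{s_{p, q}^{k_{p, q}}}{k_{p, q}!}\Big) \cdot \big(\com{Z_1}^{m_1} \cp \com{\phi_1(t_{n, q_1(t)})}^{k_{1, q_1(t)}} \cp \dots \cp \com{\phi_1(t_{n, 0})}^{k_{1, 0}} \cp \com{Z_2}^{m_2} \cp \dots \cp \com{\phi_d(t_{n, 0})}^{k_{d, 0}}\big)(Y),
\end{equation*}
where $s_{p, q} := h_n$ for $q < q_p(t)$ and $s_{p, q_p(t)} := t - t_{n, q_p(t)}$. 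Crucially, all $\com{\cdot}$'s appear in a single long left-to-right composition, so \eqref{fdjfdlkfdfdlkmcx} applies once per monomial rather than iteratively.

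Applying $\vvv$, the triangle inequality, and \eqref{fdjfdlkfdfdlkmcx} to each summand, then using $\sum_{k \geq 0} a^k / k! \leq \exxp(a)$ and factoring the resulting product over the independent indices $(p, q)$, yields
\begin{equation*}
\vvv(F_n(Y)) \leq \prod_{p = 1}^d \www(Z_p)^{m_p} \cdot \www(Y) \cdot \exxp\Big(\sum_{p = 1}^d S_{p, n}\Big),
\end{equation*}
where $S_{p, n} := \sum_{q = 0}^{q_p(t) - 1} h_n \www(\phi_p(t_{n, q})) + (t - t_{n, q_p(t)}) \www(\phi_p(t_{n, q_p(t)}))$ is a Riemann sum for $\www \cp \phi_p$ on $[r, t]$. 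By continuity of $\www \cp \phi_p$, $S_{p, n} \to \int_r^t \www(\phi_p(s)) \dd s$ as $n \to \infty$, and passing to the limit in the identity of the first paragraph delivers the claim. The only delicate point is the combinatorial bookkeeping---tracking the precise left-to-right order of the $\com{\cdot}$'s after expansion---but this order is forced by the left-composition structure of $\alpha_{\phi_p, n}[q]$ and is exactly what permits a single-shot (rather than iterative) application of \eqref{fdjfdlkfdfdlkmcx}; an iterative application would have forced a diverging chain of seminorms $\ww \leq \ww' \leq \ww'' \leq \dots$ and ruined the uniform bound.
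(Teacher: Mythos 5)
Your proposal is correct and follows essentially the same route as the paper's proof: apply Corollary \ref{fdpopodpofd} to the approximants $\ul{\alpha_{\phi_p,n}}$ built from the $\Lamb$-polynomials, bound each approximant via a single (non-iterated) application of \eqref{fdjfdlkfdfdlkmcx} to the fully expanded monomials, and identify the resulting sums $\mathrm{S}^t_{\ww,p,n}$ as Riemann sums converging to $\int_r^t \www(\phi_p(s))\,\dd s$. The only difference is that you spell out the monomial expansion and the factorization $\sum_k a^k/k!\leq \exxp(a)$ explicitly, which the paper leaves implicit in the phrase ``since \eqref{fdjfdlkfdfdlkmcx} implies''.
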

\begin{proof}
For $\elll=1,\dots,d$, we construct $\ul{\alpha_{\phi_\elll,n}}$ for $n\geq 1$ as above (for $\phi\equiv \phi_\elll$ there); and let   
 $t_{n,p}$ be as in \eqref{fdfddfdddod} for $n\geq 1$ and $p=0,\dots,n$. 
Then, for
	$$
t \in 
\begin{cases}
[t_{n,p},t_{n,p+1})\quad &\text{for}\qquad 0 \leq p\leq n-2\\
[t_{n,n-1},t_{n,n}] &\text{for}\qquad p= n-1
\end{cases}
$$
and $\elll=1,\dots,d$, we define
	\begin{align*}
		\textstyle\mathrm{S}^t_{\ww,\elll,n}:= |t-t_{n,p}|\cdot \www(\phi_\elll(t_{n,p})) + \sum_{q=0}^{p-1} |t_{n,q+1}-t_{n,q}|\cdot \www(\phi_\elll(t_{n,q})).
	\end{align*}
Then, 
$\textstyle\lim_n \mathrm{S}^t_{\ww,\elll,n}=\int_r^t \www(\phi_\elll(s))\:\dd s$ holds; and, since \eqref{fdjfdlkfdfdlkmcx} implies  
	\begin{align*}
		\textstyle\big(\vvv\cp \com{Z_1}^{m_1}\cp \ul{\alpha_{\phi_1,n}}(t,\cdot)\cp{\dots}\:\cp  &\:\textstyle \com{Z_d}^{m_d}\cp\ul{\alpha_{\phi_d,n}}(t,\cdot)\big)(Y)\\[2pt]
	&\textstyle\leq \exxp\big(\sum_{\elll=1}^d{\mathrm{S}}^{t}_{\ww,\elll,n}\big) \cdot \www(Z_1)^{m_1}\cdot{\dots}\cdot\www(Z_d)^{m_d}\cdot \www(Y),
	\end{align*}
	 the claim is clear from Corollary \ref{fdpopodpofd}.	
\end{proof}

\section{Integrability}
\label{podspodspodspo}
In this brief section, we will apply Lemma \ref{podspods} to prove Proposition \ref{fdkjfdkjdfkjfdjfdlkjfdl}. 
For this, we recall the following definitions (cf.\ Sect.\ 7 in \cite{RGM}): 
\begingroup
\setlength{\leftmargini}{12pt}
\begin{itemize}
\item
A sequence $\{\phi_n\}_{n\in \NN}\subseteq\DP^0([0,1],\mg)$ is said to be {\bf tame} \defff for each $\vv\in \SEM$, there exists some $\vv\leq \ww\in \SEM$ with
\vspace{-10pt}
	\begin{align*}
		\vvv\cp \Ad_{[\innt_r^\bullet \phi_n]^{-1}}\leq \www\qquad\quad\forall\: n \in \NN.
	\end{align*}
	\vspace{-18pt}  
\item
$G$ is said to be {\bf $\boldsymbol{0}$-confined} \defff for each $\phi\in C^{0}([0,1],\mg)$, there exists a tame Cauchy sequence $\{\phi_n\}_{n\in \NN}\subseteq \DP^0([0,1],\mg)$ with $\{\phi_n\}_{n\in \NN}\rightarrow \phi$ uniformly.
\item
$G$ is said to be {\bf k-confined} for $k\in \NN_{\geq 1}\sqcup\{\lip,\infty\}$ \defff for each $\phi\in C^{k}([0,1],\mg)$, there exists a tame Mackey-Cauchy sequence $\{\phi_n\}_{n\in \NN}\subseteq \DP^0([0,1],\mg)$ with $\{\phi_n\}_{n\in \NN}\rightarrow \phi$ uniformly.
\end{itemize}
\endgroup
\begin{remark}
\label{kjdkjdskjdskjdiuiuiuiuew}
	The deviating definitions of {\rm k}-confinedness for $k\equiv 0$ and $k\in \NN_{\geq 1}\sqcup\{\lip,\infty\}$ are necessary; because, although each $\phi\in C^0([0,1],\mg)$ admits an approximating Cauchy sequence, there usually does not exist an approximating Mackey-Cauchy sequence.\footnote{Elsewise, each Mackey complete Hausdorff locally convex vector space would automatically be integral complete.} Specifically, in the proof of Theorem 3 in \cite{RGM} (cf.\ Theorem \ref{confev}), the integral of some given $\phi\in C^k([0,1],\mg)$ ($k\in \NN\sqcup \{\lip,\infty\}$) is defined pointwise, namely, as $\lim_n \innt_0^t \phi_n$ for each $t\in [0,1]$. Here, $\DP^0([0,1],\mg)\supseteq \{\phi_n\}_{n\in \NN}\rightarrow \phi$ denotes a uniformly converging tame
\begingroup
\setlength{\leftmargini}{12pt}
\begin{itemize}
\item
Cauchy sequence \hspace{40.8pt} for\:\: $k\equiv 0$.
\item
Mackey-Cauchy sequence\:\: for\:\: $k\in \NN_{\geq 1}\sqcup \{\lip,\infty\}$. 
\end{itemize}
\endgroup
\noindent	
The key point then is that (under the given presumptions) for each fixed $t\in [0,1]$, the sequence $\{\innt_0^t \phi_n\}_{n\in \NN}\subseteq G$ is a
\begingroup
\setlength{\leftmargini}{12pt}
\begin{itemize}
\item
Cauchy sequence for $k\equiv 0$; thus, converges if $G$ is sequentially complete.
\item
Mackey-Cauchy sequence for $k\in \NN_{\geq 1}\sqcup \{\lip,\infty\}$; thus, converges if $G$ is Mackey complete. 
\end{itemize}
\endgroup
\noindent
It then follows from local $\mu$-convexity that the so-defined map $\mu\colon [0,1]\ni t\mapsto \lim_n \innt_0^t \phi_n$ is continuous, and that $\{\innt_0^\bullet \phi_n\}_{n\in \NN}\rightarrow \mu$ converges uniformly. A standard argument involving \eqref{ixxxsdsdoisdiosda} (this formula also holds in the piecewise category, cf.\ Appendix E.1 in \cite{RGM}) 
then shows that $\mu$ is of class $C^1$, with $\Der(\mu)=\phi$.
\hspace*{\fill}$\ddagger$	
\end{remark}
We recall that $G$ is said to be constricted \defff \eqref{assssasaasasdsdsds} holds; and are ready for the 
\begin{proof}[Proof of Proposition \ref{fdkjfdkjdfkjfdjfdlkjfdl}]
Lemma \ref{jshjsdahjsd} provides us with the following statements:
\begingroup
\setlength{\leftmargini}{12pt}
\begin{itemize}
\item
For each $\phi\in C^0([0,1],\mg)$, there exists a Cauchy sequence $\{\phi_n\}_{n\in \NN}\subseteq \DP^\infty([0,1],\mg)$ with $\{\phi_n\}_{n\in \NN}\rightarrow \phi$ uniformly, such that $\bigcup_{n\in \NN}\im[\phi_n]$ is contained in a metrizable compact subset $\compacto\subseteq \mg$.
\item
For each $\phi\in C^\lip([0,1],\mg)$, there exists a Mackey-Cauchy sequence $\{\phi_n\}_{n\in \NN}\subseteq \DP^\infty([0,1],\mg)$  with $\{\phi_n\}_{n\in \NN}\rightarrow \phi$ uniformly, such that $\bigcup_{n\in \NN}\im[\phi_n]$ is contained in a metrizable compact subset $\compacto\subseteq \mg$.
\end{itemize}
\endgroup
\noindent 
Up to absorbing factors into seminorms, the claim thus follows from Lemma \ref{podspods} when applied to $\mathrm{M}\equiv \compacto$ as well as $\vv\leq \ww\in \SEM$ and $C_\vv\geq 0$ as in \eqref{assssasaasasdsdsds}. 
\end{proof}
\begin{remark}
As already mentioned in Sect.\ \ref{kjdsjdskjskjdskjsk},  
Proposition \ref{fdkjfdkjdfkjfdjfdlkjfdl} generalizes Proposition 5 in \cite{RGM}. Since $G$ is constricted if $\bl \cdot,\cdot\br$ is submultiplicative, Proposition \ref{fdkjfdkjdfkjfdjfdlkjfdl} also generalizes the first part of Proposition 6 in \cite{RGM}. 
\hspace*{\fill}$\ddagger$
\end{remark}

\section{Continuity of the Integral}
\label{podspodspodspo1}
In this section, we prove Theorem \ref{aoelsaoesalsaoelsa}. Our argumentation is based on the following proposition.
\begin{proposition}
\label{fdopodpofdsuepfdsfds}
Assume that $G$ is asymptotic estimate, and that $\evol_\infty$ 
is $C^\infty$-continuous. 
Then, for each  
$\pp\in \SEM$, there exist $\pp\leq\ww\in \SEM$ and $\dindq\in \NN$, such that
\begin{align*}
		\textstyle(\pp\cp\chart)\big( \innt \psii_m\cdot{\dots}\cdot \innt \psii_1\big)\leq 1
\end{align*} 
holds for all $\psii_1,\dots,\psii_m\in \DIDE_{[0,1/m]}^\infty$ ($m\geq 1$) with $\max(\www_\infty^\dindq(\psii_1),\dots, \www_\infty^\dindq(\psii_m))\leq 1$. 
\end{proposition}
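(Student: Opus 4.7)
The strategy is to rewrite $\innt\psi_m\cdot{\dots}\cdot \innt\psi_1$ as a single product integral $\innt\Phi_m$ over $[0,1]$, bound the $C^\dindq$-seminorms of $\Phi_m$ uniformly in $m$ and in the choice of the $\psi_i$ by means of the asymptotic estimate condition, and invoke the assumed $C^\infty$-continuity of $\evol_\infty$ at~$0$. For the reparametrization and concatenation step, set $\phi_i\colon [0,1]\rightarrow \mg$, $\phi_i(t):= \tfrac{1}{m}\psi_i(t/m)$, for $i=1,\dots,m$. Part~\ref{subst} of Proposition~\ref{sddssddsxyxyxyxyyx} (applied to $\varrho\colon [0,1]\ni t\mapsto t/m$) yields $\innt\phi_i=\innt\psi_i$, and the hypothesis $\www_\infty^\dindq(\psi_i)\leq 1$ transfers to $\www(\phi_i^{(k)}(t))\leq m^{-k-1}$ for $0\leq k\leq\dindq$. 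Iterating Part~\ref{kdsasaasassaas} of Proposition~\ref{sddssddsxyxyxyxyyx} then yields
\begin{align*}
	\textstyle\innt\psi_m\cdot{\dots}\cdot \innt\psi_1=\innt\Phi_m,\qquad \Phi_m:=\sum_{i=1}^m (\Add_{\phi_m}\cp{\dots}\cp \Add_{\phi_{i+1}})(\phi_i)\in C^\infty([0,1],\mg).
\end{align*}

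For the derivative estimate, fix $\pp\leq\vv\in\SEM$ together with a partner $\vv\leq\ww\in\SEM$ as in \eqref{fdjfdlkfdfdlkmcx}. Iterating Lemma~\ref{khdhjdhdkjkjdkjdjkd}, i.e.\ $\partial_t\Add_{\phi}(\psi)=\bl\phi,\Add_\phi(\psi)\br+\Add_\phi(\dot\psi)$, each $\partial_t^k$ applied to the summand $A_i:=(\Add_{\phi_m}\cp{\dots}\cp \Add_{\phi_{i+1}})(\phi_i)$ expands into a finite sum of terms of the form handled by Lemma~\ref{fddfddffdfd}: nested composites of $\Add_{\phi_j}$ (for $j\in\{i{+}1,\dots,m\}$) interleaved with brackets $\com{\phi_{j_s}^{(\ell_s)}(t)}$ and applied to $\phi_i^{(\ell_0)}(t)$, with multi-indices subject to $r+\ell_0+\sum_s \ell_s=k$, where $r$ is the number of brackets produced. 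By Lemma~\ref{fddfddffdfd} and $\www(\phi_j^{(\ell)}(t))\leq m^{-\ell-1}$, each such term is bounded in $\vvv$-norm by $\euler\cdot m^{-k-1}$, and a straightforward induction on $k$ bounds the number of such terms in $\partial_t^k A_i$ by $(m-i+k)^k$. Summing gives $\vvv(\partial_t^k A_i(t))\leq (1+k)^k\euler/m$, so that $\vvv_\infty^\dindq(\Phi_m)\leq D_\dindq:=(1+\dindq)^\dindq\euler$ \emph{independently of $m$ and of the $\psi_i$}.

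To conclude, $C^\infty$-continuity of $\evol_\infty$ at $0$ provides $\pp\leq\vv\in\SEM$, $\dindq\in\NN$, and $\delta>0$ such that $\vvv_\infty^\dindq(\mu)\leq\delta$ implies $(\pp\cp\chart)(\innt\mu)\leq 1$ for $\mu\in \DIDE^\infty_{[0,1]}$. Replacing the partner $\vv\leq\ww'$ chosen in the previous step by $\ww:=\max(1,D_\dindq/\delta)\cdot\ww'$ (which still satisfies \eqref{fdjfdlkfdfdlkmcx} and still dominates $\pp$), the tightened hypothesis $\www_\infty^\dindq(\psi_i)\leq 1$ scales the per-term $\vvv$-estimates of the second step by an extra factor $\leq\delta/D_\dindq$, so that $\vvv_\infty^\dindq(\Phi_m)\leq\delta$, and hence $(\pp\cp\chart)(\innt\Phi_m)\leq 1$ as required.

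The principal difficulty is the derivative bookkeeping in the second step: one must simultaneously control the $\sim m^k$ growth in the number of terms produced by $\partial_t^k A_i$ and the precise $m$-scaling of each individual term, and it is crucial that the exponential factor coming from Lemma~\ref{fddfddffdfd} stays bounded (here this works because $\sum_{p=i+1}^m \int_0^1\www(\phi_p)\leq (m-i)/m\leq 1$). The key insight is that the asymptotic estimate condition, accumulated through the derivative expansion, produces exactly the $m^{-k-1}$ decay per term needed to cancel both the combinatorial growth and the outer summation over the $m$ indices~$i$.
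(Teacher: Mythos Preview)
Your proposal is correct and follows essentially the same route as the paper: the same identity $\innt\psi_m\cdots\innt\psi_1=\innt\bbchi$ via Part~\ref{kdsasaasassaas} of Proposition~\ref{sddssddsxyxyxyxyyx}, the same combinatorial expansion of $\bbchi^{(k)}$ (your term count $(m-i+k)^k$ is a coarsening of the paper's $(m-i+1)\cdots(m-i+k)$ from Lemma~\ref{dfdfdfd}), and the same per-term estimate via Lemma~\ref{fddfddffdfd}. The only difference is in absorbing the resulting constant: the paper further subdivides each $\psi_p$ into $q$ pieces so that $n=mq\to\infty$ drives the bound $\euler\cdot(n+1)\cdots(n+\dindq)/n^\dindq$ below the fixed continuity threshold~$3$, whereas you keep $n=m$, accept the $m$-independent bound $D_\dindq=(1+\dindq)^\dindq\euler$, and instead enlarge $\ww$ by the factor $D_\dindq/\delta$ (which is legitimate since each term in the expansion carries at least one factor $\www'(\phi_\bullet^{(\ell)})$, yielding the required extra $1/C$).
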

The proof of Proposition \ref{fdopodpofdsuepfdsfds} is quite elaborate, and will be established step by step in the last two parts of this section. We now first use this proposition to prove Theorem \ref{aoelsaoesalsaoelsa}.
\subsection{The Main Result}
\label{opfdpodfpofdpofd}
We start with the following observation.
\begin{lemma}
\label{cxcxcxxccxcxxccxa}
For each $\mm\in \SEM$ and  $\phi\in \DIDE_{[0,1]}$, there exists some $m\geq 1$ with 
\begin{align*}
	\textstyle(\mm\cp\chart)\big(\innt_{(p-1)/m}^\bullet \phi|_{[(p-1)/m,p/m]}\big)\leq 1\qquad\quad\forall\: p=1,\dots,m.
\end{align*}
\end{lemma}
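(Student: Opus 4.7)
The plan is to reduce the claim to a uniform continuity statement about a single $C^1$-curve. Because $\phi \in \DIDE_{[0,1]} = \Der(C^1([0,1],G))$, I can pick $\mu \in C^1([0,1],G)$ with $\Der(\mu) = \phi$. By the restriction identity $\Der(\mu|_{D'}) = \Der(\mu)|_{D'}$ from \eqref{fgfggfsss} together with the definition $\EV(\Der(\mu))=\mu\cdot \mu(r)^{-1}$, I get
\[
\textstyle\innt_{(p-1)/m}^{t}\phi|_{[(p-1)/m,p/m]} = \mu(t)\cdot \mu((p-1)/m)^{-1} \qquad\forall\: t\in [(p-1)/m,p/m].
\]
So it suffices to find $\delta>0$ with $(\mm\cp\chart)(\mu(t)\cdot\mu(s)^{-1})\leq 1$ whenever $0\leq s\leq t\leq 1$ and $t-s\leq \delta$, because then any $m\geq 1/\delta$ works.

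Next I would consider the continuous map $f\colon [0,1]^2\to G$, $(s,t)\mapsto \mu(t)\cdot\mu(s)^{-1}$, which maps the diagonal $\Delta:=\{(s,s):s\in[0,1]\}$ to $\{e\}\subseteq \U$. Thus $f^{-1}(\U)\subseteq [0,1]^2$ is an open neighborhood of the compact set $\Delta$, and on $f^{-1}(\U)$ the composition $F:=(\mm\cp\chart)\cp f$ is well-defined, continuous, and vanishes on $\Delta$.

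A standard tube-lemma / compactness argument applied to $\Delta\subseteq f^{-1}(\U)$ in the compact space $[0,1]^2$ yields $\delta_1>0$ such that every $(s,t)\in[0,1]^2$ with $|s-t|\leq \delta_1$ lies in $f^{-1}(\U)$. Then $F$ is uniformly continuous on the compact closed $\delta_1$-neighborhood of $\Delta$, and since $F|_\Delta=0$, I can shrink $\delta\in (0,\delta_1]$ so that $F(s,t)\leq 1$ on this smaller neighborhood; choosing $m\geq 1/\delta$ finishes the argument.

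I do not expect real obstacles here: the one mildly delicate point is verifying that the product-integral identity above holds as stated for the restricted curve (so that the chart-norm in the claim really is $F((p-1)/m,t)$), but this is immediate from \eqref{fgfggfsss} and the definition of $\EV$. Everything else is a routine tube-lemma/uniform-continuity argument on $[0,1]^2$.
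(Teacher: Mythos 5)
Your argument is correct and follows essentially the same route as the paper: both pick $\mu$ with $\Der(\mu)|_{[0,1]}=\phi$, identify $\innt_s^t\phi$ with $\mu(t)\cdot\mu(s)^{-1}$, and use compactness of the diagonal together with continuity of $(s,t)\mapsto\mu(t)\cdot\mu(s)^{-1}$ (which sends the diagonal to $e$) to find a uniform $\delta$. The paper phrases the last step as ``for every neighbourhood $U$ of $e$ there is $\delta_U$ with $\mu(t+s)\mu(t)^{-1}\in U$ for $0\leq s\leq\delta_U$'' and applies it to $U=\chart^{-1}$ of the open unit $\mm$-ball, whereas you route it through the tube lemma plus uniform continuity of $\mm\cp\chart\cp f$; these are interchangeable.
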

\begin{proof}
	We fix $\mu\colon I\rightarrow G$ ($I\subseteq \RR$ open with $[0,1]\subseteq I$) of class $C^1$ with $\Der(\mu)|_{[0,1]}=\phi$, choose $d>0$ such that $K_d\equiv [-d,1+d]\subseteq I$ holds, and define 
\begin{align*}
	\alpha\colon I\times I\ni(t,s)\mapsto \mu(t)\cdot \mu(s)^{-1}\in G.  
\end{align*}	
Since $[0,1]$ is compact, and since $\alpha$ is continuous with $\alpha(t,t)=e$ for each $t\in [0,1]$, to each open neighbourhood $U$ of $e$, there exists some $0<\delta_U\leq d$, such that 
	\begin{align*}
	\textstyle U\ni \alpha(t+s,t)=\textstyle \innt_t^{t+s}\Der(\mu)\qquad\quad\forall\: t\in [0,1],\:\:0\leq s\leq \delta_U
	\end{align*}
	holds; from which the claim is clear. 
\end{proof}
We more generally obtain the following statement.
\begin{lemma}
\label{cxcxcxxccxcxxccxaa}
Let $\qq\in \SEM$ be given. Then, there exists some $\qq\leq \mm\in \SEM$, such that to each $\phi\in \DIDE_{[0,1]}$, there exists some $m\geq 1$ with
\begin{align*}
	\textstyle(\qq\cp\chart)\big(\innt_{(p-1)/m}^\bullet \phi|_{[(p-1)/m,p/m]}\big)\leq 1/m\cdot \mmm_\infty(\phi)\qquad\quad\forall\: p=1,\dots,m.
\end{align*}
\end{lemma}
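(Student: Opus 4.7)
The plan is to combine Lemma \ref{cxcxcxxccxcxxccxa} (which controls the chart-image of short product integrals by the constant $1$) with the integral estimate \eqref{sadsndsdsnmdsdsa} (which converts a chart-image estimate into a seminorm estimate on $\Der$). The point is that \eqref{sadsndsdsnmdsdsa} yields a bound proportional to the length of the interval, which is precisely the $1/m$ factor we want.

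First, given $\qq \in \SEM$, I would invoke the estimate \eqref{sadsndsdsnmdsds} to obtain $\qq \leq \mm \in \SEM$ with
\[
(\qq\cp\dermapinvdiff)(x,X) \leq \mmm(X) \qquad\forall\: x\in \OB_{\mm,1},\:\: X\in \mg.
\]
This $\mm$ is the seminorm whose existence the lemma asserts, and it depends only on $\qq$ (not on $\phi$). Then, given $\phi\in \DIDE_{[0,1]}$, I apply Lemma \ref{cxcxcxxccxcxxccxa} to this $\mm$ (and to $\phi$) to obtain an $m\geq 1$ such that
\[
\textstyle(\mm\cp\chart)\big(\innt_{(p-1)/m}^\bullet \phi|_{[(p-1)/m,p/m]}\big)\leq 1\qquad\forall\: p=1,\dots,m,
\]
which means that for $\mu_p := \innt_{(p-1)/m}^\bullet \phi|_{[(p-1)/m,p/m]}$, the image $\im[\chart\cp\mu_p]$ lies in $\OB_{\mm,1}$.

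Now for each $p$, since $\Der(\mu_p) = \phi|_{[(p-1)/m,p/m]}$ and $\im[\chart\cp\mu_p]\subseteq \OB_{\mm,1}$, the computation in \eqref{sadsndsdsnmdsdsa} applies verbatim on the interval $[(p-1)/m, p/m]$ and yields
\[
\textstyle\qqq(\chart\cp\mu_p) \leq |p/m-(p-1)/m|\cdot \mmm_\infty(\Der(\mu_p)) = 1/m\cdot \mmm_\infty(\phi|_{[(p-1)/m,p/m]})\leq 1/m\cdot \mmm_\infty(\phi),
\]
which is exactly the asserted inequality.

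The only real subtlety will be the correct bookkeeping with the choice of $\mm$: it must be chosen before $\phi$ is given (so that the same $\mm$ works for all $\phi$, as required by the statement), while $m$ is permitted to depend on $\phi$. This is consistent, because \eqref{sadsndsdsnmdsds} provides $\mm$ depending only on $\qq$, and Lemma \ref{cxcxcxxccxcxxccxa} then supplies $m$ depending on $\mm$ and $\phi$. No further tools beyond \eqref{sadsndsdsnmdsds}, \eqref{sadsndsdsnmdsdsa}, and Lemma \ref{cxcxcxxccxcxxccxa} are needed.
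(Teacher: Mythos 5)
Your proposal is correct and coincides with the paper's own (very terse) proof: choose $\mm$ via \eqref{sadsndsdsnmdsds} depending only on $\qq$, obtain $m$ from Lemma \ref{cxcxcxxccxcxxccxa} applied to $\mm$ and $\phi$, and then apply the estimate \eqref{sadsndsdsnmdsdsa} on each subinterval of length $1/m$. The bookkeeping remark about the order of quantifiers is exactly the right point, and nothing further is needed.
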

\begin{proof}
We choose $\qq\leq \mm\in \SEM$ as in \eqref{sadsndsdsnmdsds}; and for $\phi\in \DIDE_{[0,1]}$ fixed, we let $m\geq 1$ be as in Lemma \ref{cxcxcxxccxcxxccxa}. 
Then, the claim is clear from Lemma \ref{cxcxcxxccxcxxccxa} and \eqref{sadsndsdsnmdsdsa}.	
\end{proof}
We finally recall the following.
\begin{remark}
\label{dsdssds}
By Lemma 15 in \cite{RGM}, $\evol_0$ is $C^0$-continuous \deff it is $C^0$-continuous at zero. Thus, in order to prove Theorem \ref{aoelsaoesalsaoelsa}, it suffices to show that for each $\pp\in \SEM$, there exists some $\pp\leq \mm\in\SEM$, such that $(\pp\cp\chart)(\innt \phi)\leq 1$ holds for each $\phi\in \DIDE_{[0,1]}$ with $\mmm_\infty(\phi)\leq 1$. 
\hspace*{\fill}$\ddagger$
\end{remark}
We are ready for the proof of Theorem \ref{aoelsaoesalsaoelsa}.
\begin{proof}[Proof of Theorem \ref{aoelsaoesalsaoelsa}]
Let $\pp\in \SEM$ be fixed. We choose $\pp\leq \ww\in \SEM$ and $\dindq\in \NN$ as in Proposition \ref{fdopodpofdsuepfdsfds}, $\ww\leq \qq\in \SEM$ as in \eqref{omegaklla}, and $\qq\leq\mm\in \SEM$ as in Lemma \ref{cxcxcxxccxcxxccxaa}. By Remark \ref{dsdssds}, it suffices to show that
\begin{align*}
	\textstyle\mmm_\infty(\phi)\leq 1\quad\:\:\text{for}\quad\:\: \phi\in \DIDE_{[0,1]}\qquad\quad\Longrightarrow\qquad\quad (\pp\cp\chart)\big(\innt \phi\big)\leq 1. 
\end{align*}
Let thus $\phi\in \DIDE_{[0,1]}$ with $\mmm_\infty(\phi)\leq 1$ be given; and choose $m\geq 1$ as in Lemma \ref{cxcxcxxccxcxxccxaa}. Then, 
\begingroup
\setlength{\leftmargini}{12pt}
\begin{itemize}
\item
For $p=1,\dots,m$, we let 
\begin{align*}
	\textstyle X_p:=\chart\big(\innt \phi|_{[(p-1)/m,p/m]}\big)\qquad 
\quad \text{as well as}\qquad\quad 
\mu_p\colon [0,1/m]\ni t\mapsto \chartinv(t\cdot m\cdot X_p).
\end{align*} 
We obtain from Lemma \ref{cxcxcxxccxcxxccxaa} that $\qq(m\cdot X_p)\leq m\cdot \mm(X_p)\leq 1$ holds.
\item
We define $\psi_p:=\Der(\mu_p)$ for $p=1,\dots,m$. We obtain from \eqref{kldlkdldsl} and \eqref{omegaklla} that 
\begin{align*}
	\www\big(\psi_p^{(n)}(t)\big)= (\www\cp \dermapdiff[n])(t\cdot m\cdot X_p,\overbrace{m\cdot X_p,\dots,m\cdot X_p}^{n+1-\text{times}})\leq \qq(m\cdot X_p)^{n+1}\leq 1\qquad\quad\forall\: t\in [0,1/m]
\end{align*}
holds for $p=1,\dots,m$ and $0\leq n\leq \dindq$; thus, $\max(\www_\infty^\dindq(\psii_1),\dots, \www_\infty^\dindq(\psii_m))\leq 1$. 
\end{itemize}
\endgroup 
\noindent
By construction, as well as Part \ref{pogfpogf} of Proposition \ref{sddssddsxyxyxyxyyx} (last step), we have
\vspace{-2pt}
\begin{align*}
	\textstyle\innt\psi_m \cdot{\dots}\cdot \innt \psi_1=\mu_m(1/m)\cdot {\dots}\cdot \mu_1(1/m)= \innt \phi|_{[(m-1)/m,1]}\cdot {\dots}\cdot \innt \phi|_{[0,1/m]}\stackrel{\ref{pogfpogf}}{=}\innt \phi,
\end{align*}
so that the claim is clear from Proposition \ref{fdopodpofdsuepfdsfds}.
\end{proof}

\subsection{A Continuity Statement}
\label{sajjsaksasakjsajkas}
We now are going to prove Proposition \ref{fdopodpofdsuepfdsfds}. 
The key observation we will use is that    
$\phi_1,\dots,\phi_n\in \DIDE^\infty_{[0,1]}$ given, we inductively obtain from Part \ref{kdsasaasassaas} of Proposition \ref{sddssddsxyxyxyxyyx} that
\begin{align}
\label{lkdslkdslksdlkds}
\begin{split}
	\textstyle\innt \phi_n\cdot{\dots}\cdot \innt \phi_1
	&\textstyle\stackrel{\ref{kdsasaasassaas}}{=}\innt \phi_{n}\cdot {\dots}\cdot\innt \phi_{3}\cdot\innt (\phi_2+\Add_{\phi_2}(\phi_1))\\
&\stackrel{\ref{kdsasaasassaas}}{=}\boldsymbol{\dots}\\
	&\textstyle\stackrel{\ref{kdsasaasassaas}}{=}\innt \underbracket{\textstyle\phi_{n}+ \sum_{p=n-1}^{1} (\Add_{\phi_{n}}\cp{\dots}\cp\Add_{\phi_{p+1}})(\phi_p)}_{\hspace{5pt}\scalebox{1}{$\bchi{\phi_1,\dots,\phi_n}\in \DIDE^\infty_{[0,1]}$}}
	\end{split}
\end{align} 
holds. 
The strategy then is to estimate the higher derivatives of $\bchi{\phi_1,\dots,\phi_n}$ 
for the particular situation that each $\phi_p$ is of the form 
\begin{align}
\label{fdfdddfdfdfdddhjfdfdff}
	\textstyle\phi_p\colon [0,1]\ni t\mapsto 1/n\cdot\chi_p(t/n)
\end{align}
for certain $\chi_1,\dots,\chi_n\in \DIDE^\infty_{[0,1/n]}$, in which case Part \ref{subst} of Proposition \ref{sddssddsxyxyxyxyyx} yields
\begin{align}
\label{lkdslkdslksdlkdsa}
\begin{split}
	\textstyle\innt \chi_n\cdot{\dots}\cdot \innt \chi_1&\textstyle\hspace{3.3pt}\stackrel{\ref{subst}}{=}\hspace{3.3pt}\innt \phi_n\cdot{\dots}\cdot \innt \phi_1\textstyle\stackrel{\eqref{lkdslkdslksdlkds}}{=} \innt\underbracket{\bchi{\phi_1,\dots,\phi_n}}_{\hspace{-0.5pt}\scalebox{1}{$\bbchi$}}\!.
\end{split}
\end{align} 
We now will first clarify the particular form of the higher derivatives of the expression $\bchi{\phi_1,\dots,\phi_n}$ in \eqref{lkdslkdslksdlkds} for arbitrary $\phi_1,\dots,\phi_n\in \DIDE^\infty_{[0,1]}$.

\subsubsection{Some Combinatorics}
Let $\phi_1,\dots,\phi_n\in \DIDE^\infty_{[0,1]}$ and 
$\dind\in \NN$ be given. We consider a term of the form
\begin{align}
\label{lkdsldslksddsaa}
	\beta=\com{\psii[1]^{(\dind_1)}}^{m_1}\cp \Add_{\phi[1]}\cp{\dots}\cp \com{\psii[d]^{(\dind_{d})}}^{m_{d}}\cp\Add_{\phi[d]}\cp\psii[d+1]^{(\dind_{d+1})}
\end{align}
with $0\leq \dind_1,\dots,\dind_{d+1}\leq \dind$, $m_1,\dots,m_{d}\in \{0,1\}$, $\psii[1],\dots,\psii[d+1]\in \{\phi_1,\dots,\phi_n\}$, and $\phi[1],\dots,\phi[d]\in \{0,\phi_1,\dots,\phi_n\}$,  
such that\footnote{If $Z$ is a finite set, then $\# Z\in \NN$ denotes the cardinality of $Z$.} 
\begin{align}
\label{rerere1}
m_1\cdot (\dind_1+1) +{\dots}+m_{d}\cdot (\dind_{d}+1)+\dind_{d+1}&+1=\dind+1\tag{I},\\[2pt]
\label{rerere2}
	\#\{\iota_p\in \{1,\dots,d\}\:|\ \phi[\iota_p]\neq 0\}&\leq n.\tag{II}
\end{align}
We obtain by induction that
\begin{lemma}
\label{dfdfdfd}
For each $k\in \NN$, the $k$-th derivative of $\bchi{\phi_1,\dots,\phi_n}$  is a sum of at most $n\cdot (n+1)\cdot {\dots}\cdot (n+k)$ terms of the form \eqref{lkdsldslksddsaa} (fulfilling \eqref{rerere1} and \eqref{rerere2}) for $\dind\equiv k$ there. 
\end{lemma}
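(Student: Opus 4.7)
The plan is to proceed by induction on $k$, combining the differentiation rule \eqref{vcvcvcvcvc} for $\partial_t \Add^t_\phi$ with the Leibniz product rule applied layer-by-layer to the nested composition in \eqref{lkdsldslksddsaa}.

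For the base case $k=0$, expanding $\bchi{\phi_1,\dots,\phi_n}$ yields $n$ summands -- namely $\phi_n$ itself and the $n-1$ nested adjoint terms $(\Add_{\phi_n}\cp{\dots}\cp\Add_{\phi_{p+1}})(\phi_p)$ for $p=1,\dots,n-1$ -- each trivially fitting the form \eqref{lkdsldslksddsaa} with $d=n-p$, all $m_i=\dind_i=0$, $\psii[d+1]\equiv\phi_p$, and $\phi[i]\in\{\phi_{p+1},\dots,\phi_n\}$. Conditions \eqref{rerere1} and \eqref{rerere2} then hold immediately, giving the count $n$, which equals $n\cdot(n+1)\cdots(n+k)$ at $k=0$.

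For the inductive step, I would show that differentiating a single term of the form \eqref{lkdsldslksddsaa} with $\dind\equiv k$ produces at most $n+k+1$ new terms of the same form with $\dind\equiv k+1$; multiplying the inductive bound $n(n+1)\cdots(n+k)$ by $n+k+1$ then yields the required count at level $k+1$. The Leibniz rule together with \eqref{vcvcvcvcvc} gives three families of contributions: (i) for each factor $\com{\psii[i]^{(\dind_i)}}^{m_i}$ with $m_i=1$, one term in which $\dind_i$ is replaced by $\dind_i+1$; (ii) for each factor $\Add_{\phi[i]}$, a term in which $\com{\phi[i]}$ is prepended to $\Add_{\phi[i]}$ -- vanishing whenever $\phi[i]=0$, since then $\com{\phi[i]}$ is the zero operator; and (iii) one term from the final $\psii[d+1]^{(\dind_{d+1})}$ whose order increases by one. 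By \eqref{rerere1}, type (i) contributes at most $\sum m_i \leq \sum m_i(\dind_i+1)\leq k$ terms, by \eqref{rerere2} type (ii) contributes at most $\#\{i:\phi[i]\neq 0\}\leq n$ terms, and type (iii) contributes exactly one -- totaling at most $n+k+1$.

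To confirm that each new term genuinely lies in \eqref{lkdsldslksddsaa} with $\dind\equiv k+1$, the only delicate case is (ii): I would accommodate the newly created $\com{\phi[i]}$ by also inserting an adjacent $\Add_0=\id$ slot, extending $d\to d+1$ and producing a permitted $\phi[\cdot]=0$ entry that is \emph{not} counted in \eqref{rerere2}. A direct check then shows that in each of the three cases the quantity $\sum m_j(\dind_j+1)+\dind_{d+1}+1$ grows by exactly one, so \eqref{rerere1} persists with $\dind$ advanced to $k+1$, and \eqref{rerere2} is untouched. The main obstacle I anticipate is precisely this structural bookkeeping in case (ii) -- making the $\Add_0$-insertion trick rigorous so that the resulting expression unambiguously fits the template while simultaneously preserving both side conditions. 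The sharp per-term multiplier $n+k+1$ uses the two constraints in tandem (the $n$-bound from \eqref{rerere2} on the $\Add$-layers and the $k$-bound from \eqref{rerere1} on the $\com{\cdot}$-layers), so any slack in handling the insertion would threaten the exact count.
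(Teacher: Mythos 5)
Your proposal is correct and follows essentially the same route as the paper's proof: induction on $k$, with the derivative of each template term split into the three contribution types, the bound $\sum_i m_i\leq k$ extracted from \eqref{rerere1}, the bound $\#\{i:\phi[i]\neq 0\}\leq n$ from \eqref{rerere2}, and the insertion of a dummy $\Add_0=\id_\mg$ slot to restore the alternating form while leaving \eqref{rerere2} untouched. The paper carries out exactly this bookkeeping (its terms $\mathbf{A}$ and $\mathbf{B}$ correspond to your types (i)+(iii) and (ii)), so no further comment is needed.
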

\begin{proof}
The claim is clear for $k=0$. We thus can assume that the claim holds for 
 some $k\geq 0$, i.e., that we have 
\vspace{-4pt}
\begin{align*}
	\textstyle\bchi{\phi_1,\dots,\phi_n}^{(k)}=\sum_{\elll=1}^{n\cdot(n+1)\cdot{\dots}\cdot(n+k)}\beta_\elll
\end{align*}
with each $\beta_\elll$ of the form \eqref{lkdsldslksddsaa} (fulfilling \eqref{rerere1} and \eqref{rerere2}) for $\dind\equiv k$ there. We let $\beta_\elll\equiv \beta$ be as in \eqref{lkdsldslksddsaa}, and define (for the second line observe $\Add_0=\id_\mg$)
\begin{align}
\label{kjsdkjdskjdskjdszdszudszuds}
\begin{split}
	\textstyle\Psi[p]:=&\textstyle \sum_{\ell=1}^{m_p}\: \com{\psi[p]^{(k_p)}}^{\ell-1}\cp        \com{\psi[p]^{(k_p+1)}}^1 \cp \com{\psi[p]^{(k_p)}}^{m_p-\ell}\\[3pt]
	\big(\hspace{-3pt}=&\textstyle \sum_{\ell=1}^{m_p}\: \com{\psi[p]^{(k_p)}}^{\ell-1}\cp \Add_0 \cp       \com{\psi[p]^{(k_p+1)}}^1 \cp \Add_0 \cp \com{\psi[p]^{(k_p)}}^{m_p-\ell} \big)
\end{split}
\end{align} 
for $p=1,\dots, d$. 
We inductively obtain from Lemma \ref{khdhjdhdkjkjdkjdjkd}, as well as the parts \ref{linear}, \ref{chainrule}, \ref{productrule} of Proposition \ref{iuiuiuiuuzuzuztztttrtrtr} that
{\allowdisplaybreaks
\begin{align*}
	\dot\beta_\elll&= \Psi[1]\cp \Add_{\phi[1]}\cp{\dots}\cp \com{\psii[d]^{(k_{d})}}^{m_{d}}\cp\Add_{\phi[d]}\cp \psii[d+1]^{(k_{d+1})}\\
	&\quad\he + \com{\psii[1]^{(k_1)}}^{m_1}\cp\partial_t\big(\Add_{\phi[1]}\cp{\dots}\cp \com{\psii[d]^{(k_{d})}}^{m_{d}}\cp\Add_{\phi[d]}\cp\psii[d+1]^{(k_{d+1})}\big)\\[4pt]
	&=\Psi[1]\cp \Add_{\phi[1]}\cp{\dots}\cp \com{\psii[d]^{(k_{d})}}^{m_{d}}\cp\Add_{\phi[d]}\cp\psii[d+1]^{(k_{d+1})}\\
	&\quad\he+ \com{\psii[1]^{(k_1)}}^{m_1}\cp \com{\phi[1]}\cp\Add_{\phi[1]}\cp{\dots}\cp \com{\psii[d]^{(k_{d})}}^{m_{d}}\cp\Add_{\phi[d]}\cp\psii[d+1]^{(k_{d+1})}\\
	&\quad\he+ \com{\psii[1]^{(k_1)}}^{m_1}\cp\Add_{\phi[1]}\cp \partial_t\big(\com{\psii[2]^{(k_2)}}^{m_2}\cp\Add_{\phi[2]}\cp {\dots} \:\cp \\
	&\textstyle\hspace{147.2pt}\com{\psii[d]^{(k_{d})}}^{m_{d}}  \cp \Add_{\phi[d]}\cp\psii[d+1]^{(k_{d+1})}\big)\\[2pt]
	&\textstyle={\boldsymbol{\dots}}\\[7pt]
	&\textstyle= \sum_{p=1}^{d} \com{\psii[1]^{(k_1)}}^{m_1}\cp \Add_{\phi[1]}\cp{\dots}\cp\Psi[p]\cp \Add_{\phi[p]} \\[1pt]
	&\hspace{162pt}\cp{\dots}\cp \com{\psii[d]^{(k_{d})}}^{m_{d}} \cp\Add_{\phi[d]}\cp \psii[d+1]^{(k_{d+1})}\\[2pt]
	&\hspace{51.5pt} + \com{\psii[1]^{(k_1)}}^{m_1}\cp \Add_{\phi[1]}\cp{\dots}\cp \com{\psii[d]^{(k_{d})}}^{m_{d}}\cp\Add_{\phi[d]}\cp \psii[d+1]^{(k_{d+1}+1)}\\[-9pt]
	&\hspace{12pt}\underbracket{\hspace{364pt}}_{\hspace{-8.8pt}\scalebox{1}{\bf{A}}}\\[7pt]
	&\quad\:\he\pl\he\textstyle \sum_{p=1}^d \com{\psii[1]^{(k_1)}}^{m_1}\cp \Add_{\phi[1]}\cp{\dots}\cp\com{\psii[p]^{(k_p)}}^{m_p}\cp\com{\phi[p]}\cp \Add_{\phi[p]}\\
	&\hspace{151.4pt}\cp{\dots}\cp \com{\psii[d]^{(k_{d})}}^{m_{d}}\cp \Add_{\phi[d]}\cp \psii[d+1]^{(k_{d+1})}\\[-9pt]
	&\hspace{25pt}\underbracket{\hspace{329.8pt}}_{\hspace{-0.5pt}\scalebox{0.9}{\bf{B}}}
\end{align*}}
\vspace{-5pt}

\noindent
holds. 
Evaluating $\Psi[p]$ for $p=1,\dots,d$ (second line in \eqref{kjsdkjdskjdskjdszdszudszuds}) and relabeling suitably, 
 we see that $\bf A$ decomposes into at most $k+1$ (use \eqref{rerere1}) summands that are of the form \eqref{lkdsldslksddsaa} (and fulfill \eqref{rerere1} and \eqref{rerere2}) for $\dind\equiv k+1$ there. 
 Moreover, by \eqref{rerere2} there occur at most $n$ non-zero summands in $\bf B$. Each of them can be brought into the form \eqref{lkdsldslksddsaa} (fulfilling \eqref{rerere1} and \eqref{rerere2}) for $\dind\equiv k+1$, by inserting an identity ($\Add_0\equiv\id_\mg$) in front of each $\com{\phi[p]}\equiv \com{\phi[p]^{(0)}}^1$, and then relabeling suitably. 
Consequently, each $\dot\beta_\elll$ can be expressed as a sum of less than $n+k+1$ summands of the form \eqref{lkdsldslksddsaa} (fulfilling \eqref{rerere1} and \eqref{rerere2}) for $\dind\equiv k+1$, from which the claim is clear. 
\end{proof}

\subsubsection{Proof of Proposition \ref{fdopodpofdsuepfdsfds}}
For $\chi_1,\dots,\chi_n\in \DIDE^\infty_{[0,1/n]}$ given, we define $\phi_1,\dots,\phi_n$ as in \eqref{fdfdddfdfdfdddhjfdfdff}; and let $\bbchi\equiv \bchi{\phi_1,\dots,\phi_n}$ be as in \eqref{lkdslkdslksdlkdsa}. Then, we obtain from Lemma \ref{fddfddffdfd} and Lemma \ref{dfdfdfd} that
\begin{lemma}
\label{kllklksalksalksalksa}
Assume that $G$ is asymptotic estimate; and let $\vv\leq \ww\in \SEM$ be as in  \eqref{fdjfdlkfdfdlkmcx}. Then, 
\begin{align*}
	\textstyle\vvv^\dindq_\infty(\bbchi)\leq \euler\cdot  \frac{(n+1)\cdot{\dots}\cdot(n+\dindq)}{n^{\dindq}}\qquad\quad\forall\:\dindq\in \NN
\end{align*}
holds for all $\chi_1,\dots,\chi_n\in \DIDE^\infty_{[0,1/n]}$ with 
$\max(\www_\infty^\dindq(\chi_1),\dots, \www_\infty^\dindq(\chi_n))\leq 1$. 
\end{lemma}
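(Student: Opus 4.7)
The plan is to combine the combinatorial Lemma \ref{dfdfdfd} with the estimate from Lemma \ref{fddfddffdfd}, exploiting carefully how the scaling $\phi_p(t)=1/n\cdot\chi_p(t/n)$ from \eqref{fdfdddfdfdfdddhjfdfdff} affects both derivatives and integrals. First I would fix $k\in\NN$ with $k\leq \dindq$ and a time $t\in[0,1]$, and apply Lemma \ref{dfdfdfd} to write $\bbchi^{(k)}(t)$ as a sum of at most $n\cdot(n+1)\cdots(n+k)$ expressions of the form \eqref{lkdsldslksddsaa} (evaluated at $t$), each with its $\dind_1,\dots,\dind_{d+1}\in \NN$, $m_1,\dots,m_d\in \{0,1\}$, $\psii[1],\dots,\psii[d+1]\in\{\phi_1,\dots,\phi_n\}$, and $\phi[1],\dots,\phi[d]\in \{0,\phi_1,\dots,\phi_n\}$ satisfying \eqref{rerere1} and \eqref{rerere2}.

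Next I would estimate each such term $\beta(t)$ via Lemma \ref{fddfddffdfd}, observing that summands with $\phi[p]=0$ are harmless since $\Add_0=\id_\mg$ and $\int_0^t\www(0)\:\dd s=0$. For the remaining factors, the scaling \eqref{fdfdddfdfdfdddhjfdfdff} gives $\phi_i^{(\dind)}(t)=n^{-(\dind+1)}\cdot\chi_i^{(\dind)}(t/n)$; hence the presumption $\max_i\www_\infty^\dindq(\chi_i)\leq 1$ yields $\www(\psii[j]^{(\dind_j)}(t))\leq n^{-(\dind_j+1)}$ for each $j$. By the substitution $u=s/n$, one also has $\int_0^t \www(\phi_i(s))\:\dd s\leq\int_0^{1/n}\www(\chi_i(u))\:\dd u\leq 1/n$ for every $i$. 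Then Lemma \ref{fddfddffdfd} combined with \eqref{rerere1} and \eqref{rerere2} gives
\begin{align*}
\vvv(\beta(t))\leq \exxp\Bigl(\sum_{\phi[\iota_p]\neq 0}\!\!1/n\Bigr)\cdot \prod_{j=1}^d n^{-m_j(\dind_j+1)}\cdot n^{-(\dind_{d+1}+1)}\leq \euler\cdot n^{-(k+1)},
\end{align*}
where the exponential factor is bounded by $\euler^{n\cdot 1/n}=\euler$ by \eqref{rerere2}, and the product of $\www$-factors simplifies via the identity $\sum_{j=1}^d m_j(\dind_j+1)+(\dind_{d+1}+1)=k+1$ from \eqref{rerere1}.

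Finally I would multiply by the number of summands (at most $n(n+1)\cdots(n+k)$) to conclude
\begin{align*}
\vvv\bigl(\bbchi^{(k)}(t)\bigr)\leq n(n+1)\cdots(n+k)\cdot \euler\cdot n^{-(k+1)}=\euler\cdot\frac{(n+1)\cdots(n+k)}{n^k};
\end{align*}
taking the supremum over $t\in[0,1]$ and the maximum over $0\leq k\leq \dindq$ (which is attained at $k=\dindq$ since the bound is monotonically non-decreasing in $k$) yields the claim. I do not expect any serious obstacle: the combinatorial bookkeeping was already done in Lemma \ref{dfdfdfd}, and the approximation estimate Lemma \ref{fddfddffdfd} directly supplies the right multilinear bound. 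The only bit that requires attention is verifying that the exponent-counting identity from \eqref{rerere1} exactly matches the power of $1/n$ coming from the chain-rule scaling of derivatives of $\phi_p$, together with noting that $m_p\in\{0,1\}$ so that $m_p(\dind_p+1)$ is indeed the correct contribution per factor.
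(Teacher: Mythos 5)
Your proposal is correct and follows essentially the same route as the paper: decompose $\bbchi^{(k)}$ via Lemma \ref{dfdfdfd}, bound each summand by Lemma \ref{fddfddffdfd} using the scaling from \eqref{fdfdddfdfdfdddhjfdfdff} together with \eqref{rerere1} for the power of $1/n$ and \eqref{rerere2} for the exponential factor $\euler$, and then sum up. The only cosmetic difference is that you evaluate pointwise in $t$ before taking suprema, whereas the paper works with $\www_\infty$ of the curves throughout; the estimates are identical.
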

\begin{proof}
Let $0\leq k\leq \dindq$ be fixed. By Lemma \ref{dfdfdfd}, we have
\begin{align}
\label{lfdlkfdlkdlkd}
	\textstyle\bbchi^{(k)}=\sum_{\elll=1}^{n\cdot(n+1)\cdot{\dots}\cdot(n+k)}\beta_\elll
\end{align}
with $\beta_\elll$ of the form \eqref{lkdsldslksddsaa} (fulfilling \eqref{rerere1} and \eqref{rerere2}) for $\dind\equiv k$ there. Then, for  $\beta_\elll\equiv \beta$ as in \eqref{lkdsldslksddsaa}, we obtain from Lemma \ref{fddfddffdfd} that 
\begin{align}
\label{nmfdnmfdnmfdnmdf}
	\vvv_\infty(\beta_\elll)&\textstyle\leq \exxp\big(\underbrace{\textstyle\sum_{p=1}^d\int_0^1 \www(\phi[p](s))\:\dd s}_{\scalebox{0.9}{\bf A}}\big) \cdot 
	\underbrace{\textstyle\prod_{p=1}^{d+1}\www_\infty\big(\psii[p]^{(k_p)}\big)^{m_p}}_{\scalebox{0.9}{\bf B}}
\end{align}
\vspace{-9pt}

\noindent
holds, with $m_{d+1}\equiv 1$. 
\begingroup
\setlength{\leftmargini}{12pt}
\begin{itemize}
\item
We obtain from \eqref{rerere2} that
\begin{align*}
	\textstyle{\bf A}&\textstyle\hspace{3.5pt}\leq\hspace{3.2pt}  n\cdot\max\big(\int_0^1 \www(\phi_1(s))\:\dd s,\dots,\int_0^1 \www(\phi_n(s))\:\dd s\big)\\
	&\textstyle\stackrel{\eqref{substitRI}}{=}n\cdot \max\big(\int_0^{1/n} \www(\chi_1(s))\:\dd s,\dots,\int_0^{1/n} \www(\chi_n(s))\:\dd s\big) \\
	&\textstyle\hspace{3.5pt}\leq\hspace{3.2pt} \max(\www_\infty(\chi_1),\dots,\www_\infty(\chi_n))\leq 1.
\end{align*}
\item
Since we have $k_1,\dots,k_{d+1}\leq k\leq \dindq$, as well as $\max(\www_\infty^\dindq(\chi_1),\dots, \www_\infty^\dindq(\chi_n))\leq 1$, we obtain from \eqref{fdfdddfdfdfdddhjfdfdff} and Part \ref{chainrule} of Proposition \ref{iuiuiuiuuzuzuztztttrtrtr} that
\vspace{-3pt}
\begin{align*}
	\www_\infty\big(\psii[p]^{(k_p)}\big)&\hspace{8.8pt}\leq\hspace{8pt} \max\big(\www_\infty^{k_p}(\phi_1),{\dots},\www^{k_p}_\infty(\phi_n)\big)\\
	&\stackrel{\eqref{fdfdddfdfdfdddhjfdfdff}, \ref{chainrule}}{\leq} n^{-(k_p+1)}\cdot  \max\big(\www_\infty^{k_p}(\chi_1),{\dots},\www^{k_p}_\infty(\chi_n)\big)\\
	&\hspace{8.8pt}\leq\hspace{8pt} n^{-(k_p+1)}
\end{align*}
holds for $p=1,\dots,d+1$. We conclude from \eqref{rerere1} that
\begin{align*}
		{\bf B}\textstyle\leq n^{- [m_1\cdot (k_1+1) + {\dots} + m_d\cdot (k_d+1) + (k_{d+1}+1)]}= n^{-(k+1)}.
\end{align*} 
\end{itemize}
\endgroup 
\noindent
The triangle inequality applied to \eqref{lfdlkfdlkdlkd} gives 
\begin{align*}
	\textstyle\vvv_\infty(\bbchi^{(k)})\stackrel{\eqref{nmfdnmfdnmfdnmdf}}{\leq} \euler\cdot \frac{(n+1)\cdot{\dots}\cdot(n+k)}{n^{k}}\leq \euler\cdot \frac{(n+1)\cdot{\dots}\cdot(n+\dindq)}{n^{\dindq}}.
\end{align*}
Since this holds for each $0\leq k\leq \dindq$, the claim follows.
\end{proof}
We are ready for the proof of Proposition \ref{fdopodpofdsuepfdsfds}.
\begin{proof}[Proof of Proposition \ref{fdopodpofdsuepfdsfds}]
Let $\pp\in \SEM$ be fixed. Since $\evol_\infty$ 
is $C^\infty$-continuous, there exist $\pp\leq \vv\in \SEM$ and $\dindq\in \NN$, such that
\begin{align}
\label{posdopdsopspodspods}
	\textstyle\vvv_\infty^\dindq(\chi)\leq 3\quad\:\:\text{for}\:\:\quad \chi\in \DIDE_{[0,1]}^\infty\qquad\quad\Longrightarrow\qquad\quad (\pp\cp\chart)(\innt\chi)\leq 1.
\end{align} 
We choose $\vv\leq \ww$ as in \eqref{fdjfdlkfdfdlkmcx}, and let ($m\geq 1$)
\begin{align*}
	\psi_1,\dots,\psi_m\in \DIDE_{[0,1/m]}^\infty\qquad\quad\text{with}\qquad\quad\max(\mmm_\infty^\dindq(\psi_1),\dots ,\www_\infty^\dindq(\psi_m))\leq 1
\end{align*}
be given.  
For $q\geq 1$ fixed, we let $n:=  m\cdot q$, and define $\chi_1,\dots,\chi_{n}$ by
\begin{align*}
	\chi_{p\cdot q + \elll}\colon [0,1/(q\cdot m)]\ni t\mapsto\psi_p((p\cdot q+\elll)/(q\cdot m) + t ) \qquad\quad\forall\: p=0,\dots,m-1, \:\:\elll=0,\dots,q-1.
\end{align*}
By construction, we have $\max(\www_\infty^\dindq(\chi_1),\dots, \www_\infty^\dindq(\chi_{n}))\leq 1$, and the parts \ref{pogfpogf} and  \ref{subst} of Proposition \ref{sddssddsxyxyxyxyyx} show
\begin{align}
\label{podspodpoddsopods}
	\textstyle\innt \psi_m\cdot{\dots}\cdot \innt \psi_1 = \innt \chi_{n}\cdot{\dots}\cdot \innt \chi_1.
\end{align}
We let $\bbchi$ be as in \eqref{lkdslkdslksdlkdsa} for $\phi_1,\dots,\phi_n$ as in \eqref{fdfdddfdfdfdddhjfdfdff}; and obtain from Lemma \ref{kllklksalksalksalksa} that 
\begin{align*}
	\textstyle\vvv^\dindq_\infty(\bbchi)\leq \euler\cdot  \frac{(n+1)\cdot{\dots}\cdot(n+\dindq)}{n^{\dindq}}
\end{align*}
holds. 
Since the right hand side is bounded by $3$ for $q$ (thus, $n= q\cdot m$) suitably large, \eqref{posdopdsopspodspods} gives
\vspace{-3pt}
\begin{align*}
	\textstyle(\pp\cp\chart)\big(\innt\psi_m \cdot{\dots}\cdot \innt \psi_1\big) \stackrel{\eqref{podspodpoddsopods}}{=}\textstyle(\pp\cp\chart)\big(\innt \chi_{n}\cdot{\dots}\cdot \innt \chi_1\big)\stackrel{\eqref{lkdslkdslksdlkdsa}}{=}(\pp\cp\chart)\big(\innt\bbchi\big)\stackrel{\eqref{posdopdsopspodspods}}{\leq} 1,
\end{align*}
which proves the claim.
\end{proof}

\section*{Acknowledgements}
The author thanks Helge Gl\"ockner for general remarks on a draft of the present article. 
This work has been supported by the Alexander von Humboldt Foundation of Germany.

\addtocontents{toc}{\protect\setcounter{tocdepth}{0}}
\appendix

\section*{APPENDIX}
\section{Bastiani's Differential Calculus}
\label{Diffcalc}
In this Appendix, we recall the differential calculus from \cite{HA,HG,MIL,KHN}, cf.\ also Sect.\ 3.3.1 in \cite{RGM}. 
\vspace{6pt}

\noindent
Let $E$ and $F$ be Hausdorff locally convex vector spaces. 
A map $f\colon U\rightarrow E$, with $U\subseteq F$ open, is said to be  
differentiable \defff 
\begin{align*}
	\textstyle(D_v f)(x):=\lim_{t\rightarrow 0}1/t\cdot (f(x+t\cdot v)-f(x))\in E
\end{align*} 
exists for each $x\in U$, and $v\in F$. Moreover, $f$ is said to be $k$-times differentiable for $k\geq 1$ \defff 
	\begin{align*}
	D_{v_k,\dots,v_1}f\equiv D_{v_k}(D_{v_{k-1}}( {\dots} (D_{v_1}(f))\dots))\colon U\rightarrow E
\end{align*}
is defined for each $v_1,\dots,v_k\in F$ -- implicitly meaning that $f$ is $p$-times differentiable for each $1\leq p\leq k$. In this case, we define
\begin{align*}
	\dd^p_xf(v_1,\dots,v_p)\equiv \dd^p f(x,v_1,\dots,v_p):=D_{v_p,\dots,v_1}f(x)\qquad\quad\forall\: x\in U,\:\:v_1,\dots,v_p\in F
\end{align*} 	
for $p=1,\dots,k$; and let $\dd f\equiv \dd^1 f$, as well as $\dd_x f\equiv \dd^1_x f$ for each $x\in U$.  
Then, $f$ is said to be  
\begingroup
\setlength{\leftmargini}{11pt}
\begin{itemize}
\item
of class $C^0$ \defff it is continuous -- In this case, we let $\dd^0 f\equiv f$.
\item
of class $C^k$ for $k\geq 1$ \defff it is $k$-times differentiable, such that 
\begin{align*}
	\dd^pf\colon U\times F^p\rightarrow E,\qquad (x,v_1,\dots,v_p)\mapsto D_{v_p,\dots,v_1}f(x)
\end{align*} 
is continuous for each $p=0,\dots,k$.  
In this case, $\dd^p_x f$ is symmetric and $p$-multilinear for each $x\in U$ and $p=1,\dots,k$, cf.\ \cite{HG}.  
\item
of class $C^\infty$ \defff it is of class $C^k$ for each $k\in \NN$. 
\end{itemize}
\endgroup
\noindent
We have the following differentiation rules \cite{HG}.
\begin{custompr}{A.1}
\label{iuiuiuiuuzuzuztztttrtrtr}
The following assertions hold:
\begingroup
\setlength{\leftmargini}{18pt}
{
\renewcommand{\theenumi}{{\Alph{enumi}})} 
\renewcommand{\labelenumi}{\theenumi}
\begin{enumerate}
\item
\label{iterated}
A map $f\colon F\supseteq U\rightarrow E$ is of class $C^k$ for $k\geq 1$ \deff $\dd f$ is of class $C^{k-1}$ when considered as a map $F' \supseteq U' \rightarrow E$ for $F'\equiv F \times F$ and $U'\equiv U\times F$.
\item
\label{linear}
If $f\colon F\rightarrow E$ is linear and continuous, then $f$ is smooth with $\dd^1_xf=f$ for each $x\in F$ as well as $\dd^kf=0$ for each $k\geq 2$.  
\item
\label{chainrule}
	Assume that $f\colon F\supseteq U\rightarrow U'\subseteq F'$ and $f'\colon F'\supseteq U'\rightarrow U''\subseteq F''$ are of class $C^k$ for $k\geq 1$, for Hausdorff locally convex vector spaces $F,F',F''$. Then, $f'\cp f\colon U\rightarrow F''$ is of class $C^k$ with 
	\begin{align*}
		\dd_x(f'\cp f)=\dd_{f(x)}f'\cp \dd_x f\qquad\quad \forall\: x\in U.
	\end{align*}
\item
\label{productrule}
	Let $F_1,\dots,F_m,E$ be Hausdorff locally convex vector spaces, and $f\colon F_1\times {\dots} \times F_m\supseteq U\rightarrow E$ be of class $C^0$. Then, $f$ is of class $C^1$ \deff the ``partial derivatives''
	\begin{align*}
		\partial_p f \colon U\times F_p\ni((x_1,\dots,x_m),v_p)&\textstyle\mapsto \lim_{t\rightarrow 0} 1/t\cdot (f(x_1,\dots, x_p+t\cdot v_p,\dots,x_m)-f(x_1,\dots,x_m))
	\end{align*}
	exist in $E$ and are continuous, for $p=1,\dots,m$. In this case, we have
	\begin{align*}
		\textstyle\dd_{(x_1,\dots,x_m)} f(v_1,\dots,v_m)&\textstyle=\sum_{p=1}^m\partial_p f((x_1,\dots,x_m),v_p)\\
		&\textstyle\equiv \sum_{p=1}^m \dd f((x_1,\dots,x_m),(0,\dots,0, v_p,0,\dots,0))
	\end{align*}
	for each $(x_1,\dots,x_m)\in U$, and $v_p\in F_p$ for $p=1,\dots,m$.
\end{enumerate}}
\endgroup
\end{custompr}

\end{document}